\pgfplotsset{compat=newest}
\newcommand{\blue}{\color{blue}}
\newcommand{\red}{\color{red}}
\newtheorem{Theorem}{Theorem}[section]
\newtheorem{Corollary}{Corollary}[section]
\newtheorem{Lemma}{Lemma}[section]
\newtheorem{Remark}{Remark}[section]
\renewcommand{\baselinestretch}{1}
\newcommand{\R}{\mathbb{R}}
\renewcommand{\P}{\mathcal{P}}
\DeclareMathOperator{\sgn}{sgn}
\definecolor{color1}{RGB}{216,27,96}
\definecolor{color2}{RGB}{30,136,229}
\definecolor{color3}{RGB}{255,193,7}
\begin{document}
\title[Block preconditioners for double saddle-point systems]{Spectral analysis of block preconditioners \\ for double saddle-point linear systems \\ with application to PDE-constrained optimization}
\author{Luca Bergamaschi  \and
\'Angeles Mart\'inez \and John W. Pearson \and Andreas Potschka}
\date{}
\footnotetext[1]{Department of Civil Environmental and Architectural Engineering, University of Padua, Via Marzolo, 9, 35100 Padua, Italy,
\texttt{E-mail}: luca.bergamaschi@unipd.it}
\footnotetext[2]{Department of Mathematics, Informatics and Geosciences, University of Trieste, Via Weiss, 2, 34128 Trieste, Italy,  
\texttt{E-mail}: amartinez@units.it}
\footnotetext[3]{School of Mathematics, The University of Edinburgh, James Clerk Maxwell Building, The King's Buildings, Peter Guthrie Tait Road, Edinburgh, EH9 3FD, United Kingdom,
\texttt{E-mail}: j.pearson@ed.ac.uk}
\footnotetext[4]{Institute of Mathematics, Clausthal University of Technology, Erzstr.~1, 38678 Clausthal-Zellerfeld, Germany,
\texttt{E-mail}: andreas.potschka@tu-clausthal.de}
\begin{abstract}
In this paper, we describe and analyze the spectral properties of a symmetric positive definite
	inexact block preconditioner 
	for a class of symmetric, double saddle-point linear systems.
	We develop a spectral analysis of the preconditioned matrix, 
	showing that its eigenvalues can be described in terms of the roots of a cubic polynomial with real coefficients.
	We illustrate the efficiency of the proposed preconditioners, and verify the theoretical bounds, in solving large-scale 
	PDE-constrained optimization problems.
\end{abstract}
\maketitle

\medskip

\noindent
\textbf{AMS classification:} 65F08, 65F10, 65F50, 49M41.

\smallskip

\noindent
\textbf{Keywords}: Double saddle-point problems. Preconditioning. Krylov subspace methods. PDE-constrained optimization.

\section{Introduction}

Given positive integer dimensions $n \ge m \ge p$, consider the $(n+m+p) \times (n+m+p)$ double saddle-point linear system
of the form
\begin{equation}\label{Eq1}
        \mathcal{A}w = b, \qquad \text{where} \quad \mathcal{A}  =
        \begin{bmatrix}
        A & B^{\top} & 0\\
        B & 0 & C^{\top}\\
0 & C & E\end{bmatrix},
\end{equation}
with $A\in \mathbb{R}^{n \times n}$ is a symmetric positive definite (SPD) matrix, $B \in \mathbb{R}^{m \times n}$, and $C \in \mathbb{R}^{p \times m}$ having full row rank, and $E \in \mathbb{R}^{p \times p}$ a square positive semidefinite matrix.
Moreover $b$ and $w$ are vectors of length $n+m+p$.
This paper is concerned with an SPD block preconditioner for the numerical solution of (\ref{Eq1}).

Such linear systems arise in a number of scientific applications including constrained least squares problems \cite{Yuan}, constrained quadratic programming \cite{Han}, and magma--mantle dynamics \cite{Rhebergen}, to mention a few; see, e.g., \cite{Chen, Cai}. Similar block structures
arise e.g., in liquid crystal director modeling or in the coupled Stokes--Darcy problem, and the preconditioning of such linear systems has been considered in \cite{ChenRen, Szyld, Benzi2018, BeikBenzi2022}.
We also mention that block diagonal preconditioners for  problem (\ref{Eq1}) have been thoroughly studied in 
\cite{Bradley,SZ} and inexact block triangular preconditioners have been analyzed in \cite{Balani-et-al-2023a, Balani-et-al-2023b}.

Arguably the most prominent Krylov subspace methods for solving~\eqref{Eq1} are preconditioned variants of MINRES~\cite{minres} and GMRES~\cite{saad1986gmres}. In contrast to GMRES, the previously-discovered MINRES algorithm can explicitly exploit the symmetry of $\mathcal{A}$. As a consequence, MINRES features a three-term recurrence relation, which is beneficial for its implementation (low memory requirements because subspace bases need not be stored) and its purely eigenvalue-based convergence analysis (via the famous connection to orthogonal polynomials, see~\cite{fischer1996polynomial,greenbaum1997iterative}). Specifically, if the eigenvalues of the preconditioned matrix are contained within $[\rho^-_l, \rho^-_u] \cup [\rho^+_l, \rho^+_u]$, for $\rho^-_l < \rho^-_u < 0 < \rho^+_l < \rho^+_u$, then at iteration $k$ the Euclidean norm of the preconditioned residual $r_k$ satisfies the bound
\[
	\frac{\lVert r_k \rVert}{\lVert r_0 \rVert} \le 2 \left( \frac{\sqrt{\lvert \rho^-_l \rho^+_u \rvert} - \sqrt{\lvert \rho^-_u \rho^+_l \rvert}}{\sqrt{\lvert \rho^-_l \rho^+_u \rvert} + \sqrt{\lvert \rho^-_u \rho^+_l \rvert}} \right)^{\lfloor k / 2 \rfloor}.
\]
By contrast, GMRES needs to store subspace bases and its convergence analysis is in general dependent on the corresponding eigenspaces as well, which are more complicated to analyze than eigenvalues (see, e.g., \cite{embree2022descriptive}).

This motivates us to study a recently-proposed SPD block preconditioner~\cite{pearson2023symmetric} for~\eqref{Eq1}, which can be applied within MINRES. So far, tight eigenvalue bounds for inexact application of the preconditioner in all three diagonal blocks are missing. We close this gap by extending techniques from~\cite{Balani-et-al-2023b} together with an optimization-based paradigm to bound extremal roots of parameter-dependent polynomials.


The paper is structured as follows:
We present the ideal and approximate SPD double Schur complement preconditioner in Sec.~\ref{sec:ev_analysis} and analyze the case with vanishing $E$. In Sec.~\ref{sec:E_not_zero}, we extend the analysis to the case $E \neq 0$.
If the off-diagonal block $C$ is invertible, we can further refine the eigenvalue bounds via the working presented in Sec.~\ref{sec:C_invertible}. We illustrate and discuss the quality of the new bounds on numerical benchmark problems from the optimal control of partial differential equations (PDEs), with distributed and boundary observation, in Sec.~\ref{sec:numerics}. The paper ends with concluding remarks in Sec.~\ref{sec:conclusion}.

\section{Eigenvalue analysis of an inexact SPD preconditioner} \label{sec:ev_analysis}
We define \[S = BA^{-1}B^{\top}, \qquad X = E + C S^{-1} C^\top,\] and we consider  the following approximations in view
of a practical application of the preconditioner:
\begin{align}
	\label{hattilde}
	\widehat{A} ={}& A, && \nonumber \\
	\widetilde{S} ={}&  B \widehat A^{-1} B^\top, &\widehat{S}  \approx{}& \widetilde{S}, \\
	\widetilde{X} ={}& E + C \widehat S^{-1} C^\top, &\widehat{X}  \approx{}& \widetilde{X}\nonumber .
\end{align}
Here, $\widehat{A}$ is an SPD approximation of $A$ while $\widehat{S}$  and $\widehat X$
are SPD approximations of the exact Schur complements obtained from the approximations of $S$ and $X$, respectively.

We now consider the SPD preconditioner proposed in  \cite{pearson2023symmetric}
in the framework of multiple saddle-point linear systems. This is defined as 
$\P = \P_L \P_D^{-1} \P_L^\top$, where
\begin{equation*}
	{\mathcal{P}_L}=\begin{bmatrix} \widehat A &0&0\\B&-\widehat S &0\\0&C& \widehat X\end{bmatrix},
		\qquad
	{\mathcal{P}_D}=\begin{bmatrix} \widehat A &0&0\\0&\widehat S &0\\0&0& \widehat X\end{bmatrix}.
\end{equation*}
In this section we analyze the eigenvalue distribution  of the preconditioned matrix 
$\mathcal{P}^{-1}\mathcal{A}$ and relate its spectral properties 
with the extremal eigenvalues of 
$\widehat A^{-1} A$, $\widehat S^{-1} \widetilde{S}$, and $\widehat X^{-1} \widetilde{X}$, as defined in (\ref{hattilde}).

Finding the eigenvalues of ${\mathcal{P}}^{-1}\mathcal{A}$ is equivalent to solving
$$\P_D^{-1/2} \mathcal A \P_D^{-1/2} v = 
\lambda \P_D^{-1/2} P_L \P_D^{-1/2} \P_D^{-1/2} P_L^\top \P_D^{-1/2}  v, \qquad v = \begin{bmatrix} x\\y\\z\end{bmatrix},$$
	where $x$, $y$, and $z$ denote vectors of length $n,m$ and $p$, respectively.
Exploiting the block components of this generalized eigenvalue problem, we obtain
\begin{equation*}
	\begin{bmatrix}\bar{A}  &  R^\top  &  0\\
		R  &  0  &  K^\top\\
	0  &  K  & \bar{E} \end{bmatrix}
		\begin{bmatrix}x\\y\\z  \end{bmatrix}=\lambda 
			\begin{bmatrix}I  &    0  &  0\\R  &  -I  &  0\\0  &  K  &  I\end{bmatrix}
			\begin{bmatrix}I  &  R^\top  &  0\\0  &  -I  &  K^\top\\0  &  0  &  I\end{bmatrix}
\begin{bmatrix}x\\y\\z  \end{bmatrix},
\end{equation*}
which can be also written as 
\begin{equation}\label{Eq31.1}
	\begin{bmatrix}\bar{A}  &  R^\top  &  0\\
		R  &  0  &  K^\top\\
	0  &  K  & \bar{E} \end{bmatrix}
		\begin{bmatrix}x\\y\\z  \end{bmatrix}=\lambda 
			\begin{bmatrix}I  &    R^\top  &  0\\R  &  I + R R^\top  &  -K^\top\\0  &  -K  &  I + K K^\top\end{bmatrix}
\begin{bmatrix}x\\y\\z  \end{bmatrix},
\end{equation}
where $\bar{A}=\widehat A^{-\frac{1}{2}}A\widehat A^{-\frac{1}{2}} \equiv A_{\text{prec}},\: R =\widehat S^{-\frac{1}{2}}B\widehat A^{-\frac{1}{2}}$, $K = \widehat X^{-\frac{1}{2}}C\widehat S^{-\frac{1}{2}}$, and $\bar{E}=\widehat{X}^{-1/2}E\widehat{X}^{-1/2}$. 
Notice that 
\begin{align*} 
	R R^\top ={}&  \widehat S^{-\frac{1}{2}} \widetilde S \widehat S^{-\frac{1}{2}} 
	\equiv S_{\text{prec}}, \\
	K K^\top ={}&  \widehat X^{-\frac{1}{2}}  \left(\widetilde X -E\right) \widehat X^{-\frac{1}{2}} = \widehat X^{-\frac{1}{2}} \widetilde X \widehat X^{-\frac{1}{2}} -\bar{E} \equiv X_{\text{prec}} - \bar{E}.
\end{align*}
We define the Rayleigh quotient for a given symmetric matrix $H$ and nonzero vector $w$ as
\[ q(H,w) = \frac{w^{\top} H w}{w^{\top} w}.\]
The following indicators are used:
        \begin{align} 
		\gamma_{\min}^A  & \equiv \lambda_{\min} (\widehat A^{-1} A),  & \gamma_{\max}^A  & \equiv \lambda_{\max} (\widehat A^{-1} A), & \gamma_A(w_n) &= q(A_{\text{prec}}, w_n) \in [ \gamma_{\min}^A,  \gamma_{\max}^A ] \equiv I_A,  \nonumber \\[.6em]
		 \gamma_{\min}^{R}  & \equiv \lambda_{\min} (\widehat S^{-1} \widetilde{S}),  & \gamma_{\max}^{R}  & \equiv \lambda_{\max} (\widehat S^{-1} \widetilde{S}), &
		\gamma_{R}(w_m) &= q(S_{\text{prec}}, w_m)
                \in [ \gamma_{\min}^{R},  \gamma_{\max}^{R} ]\equiv I_R,  \nonumber \\[.6em]
		 \gamma_{\min}^{X}  & \equiv \lambda_{\min} (\widehat X^{-1} \widetilde{X}),  & \gamma_{\max}^{X} & \equiv \lambda_{\max} (\widehat X^{-1} \widetilde{X}), &
		\gamma_{X}(w_p) &= q(X_{\text{prec}}, w_p)
		\in [ \gamma_{\min}^{X},  \gamma_{\max}^{X} ] \equiv I_X,   \label{indA} \\[.6em]
		 \gamma_{\min}^{E}  & \equiv \lambda_{\min} (\widehat X^{-1} {E}),  & \gamma_{\max}^{E} & \equiv \lambda_{\max} (\widehat X^{-1} {E}), & \gamma_{E}(w_p)  &=
		q(\bar{E}, w_p)
                \in [ \gamma_{\min}^{E},  \gamma_{\max}^{E} ] \equiv I_E,   \nonumber \\[.6em]
   \label{indK}
		\gamma_{\min}^K  & \equiv \lambda_{\min} (K K^{\top}),  & \gamma_{\max}^K  & \equiv \lambda_{\max} (K K^{\top}) &
		\gamma_K(w_p)  &= q(K K^{\top},w_p) \in [ \gamma_{\min}^K,  \gamma_{\max}^K ]\equiv I_K. \nonumber
\end{align}
From the previous relations it easily holds that
$\gamma_X(w_p) = \gamma_K(w_p) + \gamma_E(w_p)$.

In the following, to make the notation easier we remove the argument $w_*$ whenever one of the indicators $\gamma_A$, $\gamma_R$, $\gamma_X$, $\gamma_E$, or $\gamma_K$ is used.
We finally make the following assumptions:
\begin{equation}
	\label{spect_int}
	\gamma_{\min}^A < 1 < \gamma_{\max}^A , \qquad 1 \in I_R, \qquad 1 \in I_X. 
\end{equation}


\subsection{Spectral analysis in a simplified case}
{We initially focus on the case $E \equiv 0$.
In this situation, $K K^\top = X_{\text{prec}}$, $\gamma_E \equiv 0$, and, consequently,
$\gamma_X = \gamma_K$.
Note that, in this case, we have also that $1 \in I_K$.
}

The eigenvalue problem \eqref{Eq31.1} then reads
\begin{align}
\bar{A}x-\lambda x ={}& (\lambda -1) R^\top y, \nonumber\\
(1-\lambda) Rx- \lambda(I + R R^\top) y  ={}& - (1+\lambda )K^\top z, \label{Eq33.1}\\
(1+ \lambda) Ky - \lambda(I + K K^\top) z ={}& 0. \label{Eq34.1}
\end{align}
Before stating the main results of this section, 
we premise a technical lemma, which generalizes \cite[Lemma 3.1]{BergaNLAA10}.
\begin{Lemma}\label{Le1}
	Let $Z$ be a symmetric matrix valued function  defined in  $F \subset \R$, and  
	\[0 \notin  [\min\{\sigma(Z(\zeta))\},\max\{\sigma(Z(\zeta))\}],\ \forall \zeta \in F,\]
  where $\sigma(Z(\zeta))$ denotes the spectrum of $Z(\zeta)$.
	Then, for arbitrary $s \neq 0$, there exists a vector $v \neq 0$ such that
\begin{equation*}
	\frac{s^\top(Z(\zeta))^{-1}s}{s^\top s}=\frac{1}{\gamma_Z},
	\qquad \text{with} \quad \gamma_Z = \frac{v^\top Z(\zeta) v}{v^\top v}.
\end{equation*}
\end{Lemma}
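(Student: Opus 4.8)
The plan is to reduce the statement to the scalar spectral theorem for the single symmetric matrix $Z(\zeta)$, treating $\zeta$ as fixed throughout (the ``matrix valued function'' language only means we apply the argument pointwise in $\zeta$, so the dependence on $\zeta$ plays no role beyond guaranteeing that $0$ is separated from the spectrum). So fix $\zeta \in F$ and write $Z = Z(\zeta)$; by hypothesis $Z$ is symmetric and invertible, with all eigenvalues either strictly positive or strictly negative.

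First I would diagonalize: write $Z = Q \Lambda Q^\top$ with $Q$ orthogonal and $\Lambda = \mathrm{diag}(\mu_1,\dots,\mu_k)$, all $\mu_i$ of one sign. Given $s \neq 0$, set $t = Q^\top s \neq 0$ and $\beta = t^\top t = s^\top s$. Then
\[
	\frac{s^\top Z^{-1} s}{s^\top s} = \frac{1}{\beta}\sum_i \frac{t_i^2}{\mu_i},
\]
which is a convex combination (with weights $t_i^2/\beta \ge 0$ summing to $1$) of the numbers $1/\mu_i$. Hence this Rayleigh quotient lies in the closed interval spanned by $\{1/\mu_i\}$, and since all $\mu_i$ share a sign, $1/\bigl(s^\top Z^{-1}s / s^\top s\bigr)$ — call it $\gamma_Z$ — is a well-defined nonzero real number lying in $[\min_i \mu_i, \max_i \mu_i]$. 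The second step is to realize $\gamma_Z$ as a Rayleigh quotient $v^\top Z v / v^\top v$: the function $w \mapsto q(Z,w) = w^\top Z w / w^\top w$ is continuous on the (connected) unit sphere, its range is exactly $[\mu_{\min}, \mu_{\max}]$, and $\gamma_Z$ lies in this range, so by the intermediate value theorem there exists a unit vector $v$ (hence $v \neq 0$) with $q(Z,v) = \gamma_Z$. Combining the two displayed relations gives exactly the claimed identity.

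The only subtlety — and the ``main obstacle,'' though it is mild — is checking that $\gamma_Z$ is genuinely well-defined and nonzero: this is precisely where the hypothesis $0 \notin [\min\sigma(Z(\zeta)),\max\sigma(Z(\zeta))]$ is used. It guarantees $Z$ is invertible (so $Z^{-1}s$ makes sense), that $s^\top Z^{-1} s \neq 0$ for $s \neq 0$ (since $\sum_i t_i^2/\mu_i$ is a sum of terms all of one sign, not all zero), and that reciprocation maps the interval $[\mu_{\min},\mu_{\max}]$ into itself in the sense needed — more precisely, $\gamma_Z = \bigl(\sum_i (t_i^2/\beta)/\mu_i\bigr)^{-1}$, being the reciprocal of a convex combination of the $1/\mu_i$, is sandwiched between the harmonic-type extremes and in particular lies in $[\mu_{\min},\mu_{\max}]$; this last containment is the elementary fact that for positive numbers the reciprocal of a weighted average of reciprocals lies between the smallest and largest of the numbers (and symmetrically in the all-negative case). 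I would state this as a one-line lemma or simply verify it inline. Everything else is routine, and the proof is essentially the $1$-parameter generalization of \cite[Lemma 3.1]{BergaNLAA10}, as the authors indicate.
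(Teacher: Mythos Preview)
Your argument is correct, but the paper takes a shorter and fully constructive route. Instead of diagonalizing and invoking the intermediate value theorem on the Rayleigh-quotient map, the paper simply observes that (for fixed $\zeta$) either $Z$ or $-Z$ is SPD, writes $Z = LL^\top$, and sets $v = L^{-1}s$; then
\[
\frac{s^\top Z^{-1}s}{s^\top s}
= \frac{s^\top L^{-\top}L^{-1}s}{s^\top s}
= \frac{v^\top v}{v^\top LL^\top v}
= \frac{1}{\gamma_Z},
\]
with $\gamma_Z = v^\top Z v / v^\top v$, and the negative-definite case follows by applying the same to $-Z$. What the paper's approach buys is an explicit $v$ and a two-line proof with no need for the harmonic-mean sandwich or the continuity argument on the sphere; what your approach buys is that it works verbatim without choosing a factorization and makes the role of the spectral interval $[\mu_{\min},\mu_{\max}]$ transparent, at the cost of being non-constructive and slightly longer.
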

\begin{proof}
	For every $\zeta \in F$, either $Z(\zeta)$ or $-Z(\zeta)$  is SPD. In the first case we write
	$Z(\zeta) = L L^\top$ and hence
	\[ \frac{s^\top(Z(\zeta))^{-1}s}{s^\top s} = 
	 \frac{s^\top  L^{-\top} L^{-1} s}{s^\top s} \overset{v = L^{-1} s}{=}
	 \frac{v^\top v}{v^\top L L^\top v} =  
	 \frac{v^\top v}{v^\top Z(\zeta)  v} =  
	 \frac{1}{\gamma_Z}. \]
	 If $-Z(\zeta)$ is SPD,  the same result holds by applying the previous developments to $-Z(\zeta)$.
\end{proof}

We first concentrate on a classical saddle-point linear system. Letting
	\[\mathcal{A}_0 = 
\begin{bmatrix}
	A & B^{\top} \\
	B & 0\\
\end{bmatrix}, \qquad \mathcal{P}_0 = 
        \begin{bmatrix} \widehat A &0\\B&-\widehat S \end{bmatrix}
		\begin{bmatrix} \widehat A &0\\0&\widehat S \end{bmatrix}^{-1}
        \begin{bmatrix} \widehat A &B^\top\\0&-\widehat S \end{bmatrix},
		\]
		then the eigenvalues of 
$\mathcal{P}_0^{-1}\mathcal{A}_0$ are the same as those of
\begin{equation}\label{Eq31.0}
        \begin{bmatrix}\bar{A}  &  R^\top  \\
                R  &  0  \\ \end{bmatrix}
                \begin{bmatrix}x\\y  \end{bmatrix}=\lambda
                        \begin{bmatrix}I  &    R^\top  \\R  &  I + R R^\top \\ \end{bmatrix}
\begin{bmatrix}x\\y \end{bmatrix}. 
\end{equation}

			The following theorem characterizes the eigenvalues of the preconditioned matrix $\mathcal{A}_0$,
			and hence a classical saddle-point linear system preconditioned by ${\mathcal{P}_0}$, in terms
			of the indicators $\gamma_A$ and $\gamma_R$.
The findings of this theorem also constitute the basis for the proof of \Cref{Theo:pi}.
\begin{Theorem} \label{Theo:p}
	\label{saddle-point}
	The eigenvalues of $\mathcal{P}_0^{-1}\mathcal{A}_0$  are either contained in $[\gamma_{\min}^A, \gamma_{\max}^A]$, or they
	are the roots of the $(\gamma_A,\gamma_R)$-parametric family of polynomials
	\begin{equation} \label{plambda} 
	p(\lambda; \gamma_A, \gamma_R) = \lambda^2 - \lambda(\gamma_A \gamma_R + \gamma_A - 2\gamma_R)  - \gamma_R \qquad \text{for }~\gamma_A \in I_A,~\gamma_R \in I_R.
\end{equation}
\begin{Remark}[Notation]
  For quantities which depend on $\gamma$-indicators $\gamma_A$, $\gamma_R$, $\gamma_X$, $\gamma_E$, or $\gamma_K$, we will frequently suppress the $\gamma$-indicator arguments unless they are substituted by special values. Such quantities can be polynomials or their roots. For instance, we will frequently write $p(\lambda)$ as shorthand for $p(\lambda; \gamma_A, \gamma_R)$ but may explicitly write, e.g., $p(\lambda; \gamma^A_{\min}, \gamma^R_{\max})$.
\end{Remark}
\end{Theorem}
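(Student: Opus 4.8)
The plan is to work with the equivalent generalized eigenproblem~\eqref{Eq31.0}, written out in block form as
\begin{align*}
\bar A x - \lambda x &= (\lambda - 1)\, R^\top y, \\
(1-\lambda)\, R x &= \lambda\, (I + R R^\top)\, y .
\end{align*}
First I would dispose of the degenerate values of $\lambda$. Since $A$ is SPD and $B$ has full row rank, $\mathcal{A}_0$ is nonsingular, hence $\lambda \neq 0$. If $\lambda = 1$ the second equation gives $(I + R R^\top) y = 0$, so $y = 0$; and if $y = 0$ the first equation reads $\bar A x = \lambda x$ with $x \neq 0$, whence $\lambda \in \sigma(\bar A) \subseteq [\gamma_{\min}^A, \gamma_{\max}^A]$ and the first alternative of the theorem already holds. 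Likewise, if $\lambda \in [\gamma_{\min}^A, \gamma_{\max}^A]$ there is nothing to prove. It therefore remains to treat the case $y \neq 0$, $\lambda \notin \{0,1\}$, and $\lambda \notin [\gamma_{\min}^A, \gamma_{\max}^A]$.

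Under these assumptions $\bar A - \lambda I$ is invertible, so the first equation yields $x = (\lambda - 1)(\bar A - \lambda I)^{-1} R^\top y$, and substituting into the second one (using $(1-\lambda)(\lambda-1) = -(1-\lambda)^2$) gives
\[
\bigl[\, \lambda (I + R R^\top) + (1-\lambda)^2\, R (\bar A - \lambda I)^{-1} R^\top \,\bigr] y = 0 .
\]
I would premultiply this by $y^\top$, divide by $y^\top y$, and set $w = R^\top y$, which is nonzero because $R^\top$ has full column rank (as $B$ has full row rank). Writing $\frac{w^\top w}{y^\top y} = \frac{y^\top R R^\top y}{y^\top y} = q(R R^\top, y) =: \gamma_R \in I_R$, and factoring the middle Rayleigh quotient through $w$, this reduces to
\[
\lambda\,(1 + \gamma_R) + (1-\lambda)^2\, \gamma_R\, \frac{w^\top (\bar A - \lambda I)^{-1} w}{w^\top w} = 0 .
\]

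Because $\lambda \notin [\gamma_{\min}^A, \gamma_{\max}^A] = [\lambda_{\min}(\bar A), \lambda_{\max}(\bar A)]$, the matrix $\bar A - \lambda I$ is definite, i.e.\ $0 \notin [\,\gamma_{\min}^A - \lambda,\ \gamma_{\max}^A - \lambda\,]$, so Lemma~\ref{Le1} applies (with the one-point index set $F = \{\lambda\}$) and produces $v \neq 0$ with
\[
\frac{w^\top (\bar A - \lambda I)^{-1} w}{w^\top w} = \frac{1}{\gamma_A - \lambda}, \qquad \gamma_A := q(\bar A, v) \in I_A .
\]
Substituting this and multiplying by $\gamma_A - \lambda \neq 0$ leaves $\lambda(1 + \gamma_R)(\gamma_A - \lambda) + (1-\lambda)^2 \gamma_R = 0$; expanding and collecting powers of $\lambda$ reproduces exactly $p(\lambda; \gamma_A, \gamma_R) = \lambda^2 - \lambda(\gamma_A\gamma_R + \gamma_A - 2\gamma_R) - \gamma_R = 0$ with $\gamma_A \in I_A$ and $\gamma_R \in I_R$, which is the second alternative.

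The closing algebraic expansion is routine; the genuine care lies in the case distinction, and the point I would flag as the main obstacle is recognising that whenever $\lambda \in \sigma(\bar A)$ the first alternative is already satisfied, so that in the remaining regime $\bar A - \lambda I$ is definite and Lemma~\ref{Le1} may legitimately be invoked with the vector $w = R^\top y$. One should also verify that a single scalar $\gamma_R = q(R R^\top, y) \in I_R$ governs both $R$-dependent contributions, so that no spurious second parameter is introduced, and that $R^\top y \neq 0$, which is exactly what makes the Rayleigh quotient $w^\top (\bar A - \lambda I)^{-1} w / (w^\top w)$ well defined.
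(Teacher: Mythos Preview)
Your proof is correct and follows essentially the same route as the paper: assume $\lambda\notin[\gamma_{\min}^A,\gamma_{\max}^A]$, eliminate $x$ via the first block row, take the Rayleigh quotient of the resulting equation in $y$, and invoke Lemma~\ref{Le1} on $\bar A-\lambda I$ to reduce everything to $p(\lambda;\gamma_A,\gamma_R)=0$. Your additional explicit treatment of the degenerate cases $y=0$, $\lambda\in\{0,1\}$, and the verification that $R^\top y\neq 0$ (needed for the Rayleigh quotient in $w$) are welcome clarifications that the paper leaves implicit.
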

\begin{proof} (of Thm.~\ref{Theo:p})
	Assume that $\lambda \not \in [\gamma^A_{\min}, \gamma^A_{\max}] $.
	Then from the first row of \eqref{Eq31.0} we obtain
\begin{equation}\label{Eq65}
x = (1-\lambda) (\lambda I -\bar{A})^{-1} R^\top y.
\end{equation}
	Inserting (\ref{Eq65}) into the second row of \eqref{Eq31.0} yields
\begin{equation*}
	\underbrace{\left( (\lambda-1)^2 R (\lambda I -  \bar{A})^{-1} R^\top - \lambda (I + R R^\top) \right)}_{\text{\normalsize$Y(\lambda) $}} y = 0.
\end{equation*}
	Applying Lemma \ref{Le1} to $Z(\lambda) = \lambda I - \bar{A}$ and setting $u = R^\top y$ yields
	\begin{align}
		\nonumber 0 ={}& 	\frac{y^\top Y(\lambda) y}{y^\top y} =
		(\lambda-1)^2 \frac{y^\top R (\lambda I -  \bar{A})^{-1} R^\top y}{y^\top y} - \lambda \left( 1 + \frac{y^\top R R^\top y}
		{y^\top y} \right) \\
		\nonumber ={}& (\lambda-1)^2 \frac{u^\top (\lambda I -  \bar{A})^{-1} u}{u^\top u}\frac{y^\top R R^\top y}{y^\top y} - \lambda \left( 1 + \frac{y^\top R R^\top y}
		{y^\top y} \right) \\
		\label{yZy} ={}& \frac{(\lambda-1)^2}{\lambda - \gamma_A} \gamma_R - \lambda(1 + \gamma_R) 
	= \frac{-\lambda^2 + \lambda(\gamma_A \gamma_R + \gamma_A - 2 \gamma_R) + \gamma_R}{\lambda - \gamma_A}\equiv 
	\frac{p(\lambda)}{\gamma_A - \lambda},
    \end{align}
	with
	\[p(\lambda) = \lambda^2 - \lambda(\gamma_A \gamma_R + \gamma_A - 2\gamma_R)  - \gamma_R. \qedhere \]
\end{proof}
If $\gamma_A = 1$ then the roots of $p(\lambda)$ are $-\gamma_R , 1$. 
To bound
the roots of $p(\lambda)$ when $\gamma_A \ne 1$, we develop a general result for the extremal roots of a polynomial with the help of an optimization problem. We shall use the result repeatedly in the remainder of the paper. 

\begin{Lemma} \label{lem:extremal_roots}
  Let $\boldsymbol{\gamma} \in \R^d$, $q(\lambda; \boldsymbol{\gamma})$ be a $\boldsymbol{\gamma}$-dependent polynomial in $\lambda$, and consider the set
  \[
    \mathbb{X} = \{ (\lambda, \boldsymbol{\gamma}) \in \R^{d+1} \mid q(\lambda; \boldsymbol{\gamma}) = 0 
    \text{ and } \gamma_j \in [\gamma^j_{\min}, \gamma^j_{\max}] \text{ for } j = 1, \dotsc, k \}.
  \]
  If for $(\lambda, \boldsymbol{\gamma}) \in \mathbb{X}$  the root $\lambda$ of $q$ is locally extremal over $\mathbb{X}$ and satisfies $\frac{\partial q}{\partial \lambda}(\lambda; \boldsymbol{\gamma}) \neq 0$, then exactly one of the following three cases holds:
  \begin{alignat*}{3}
    \emph{(a)} &~& \delta \frac{\partial q}{\partial \gamma_j}(\lambda; \boldsymbol{\gamma}) &
    \ge 0 & \text{and } \gamma_j &= \gamma^j_{\min},\\
    \emph{(b)} & & \delta \frac{\partial q}{\partial \gamma_j}(\lambda; \boldsymbol{\gamma}) &
    \le 0 & \text{and } \gamma_j &= \gamma^j_{\max},\\
    \emph{(c)} & & \frac{\partial q}{\partial \gamma_j}(\lambda; \boldsymbol{\gamma}) &
    = 0 & \text{ and } \gamma_j &\in (\gamma^j_{\min}, \gamma^j_{\max}),
  \end{alignat*}
  where the sign $\delta \in \{\pm 1\}$ is defined by
  \[
    \delta =
    \begin{cases}
      -\sgn \frac{\partial q}{\partial \lambda}(\lambda; \boldsymbol{\gamma}) & \text{if $\lambda$ is a local minimum},\\
      +\sgn \frac{\partial q}{\partial \lambda}(\lambda; \boldsymbol{\gamma}) & \text{if $\lambda$ is a local maximum}.\\
    \end{cases}
  \]
\end{Lemma}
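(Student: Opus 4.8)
The plan is to treat this as a constrained optimization problem and invoke the Karush--Kuhn--Tucker (KKT) first-order necessary conditions. The idea is that a locally extremal root $\lambda$ over $\mathbb{X}$ solves, locally, the problem of minimizing (or maximizing) $\lambda$ subject to the single equality constraint $q(\lambda; \boldsymbol{\gamma}) = 0$ and the $2k$ box inequality constraints $\gamma^j_{\min} \le \gamma_j \le \gamma^j_{\max}$, with $\gamma_{k+1}, \dotsc, \gamma_d$ left free. Since $\tfrac{\partial q}{\partial \lambda}(\lambda;\boldsymbol{\gamma}) \ne 0$ by hypothesis, the gradient of the equality constraint with respect to $(\lambda, \boldsymbol{\gamma})$ has a nonzero first component, so the equality constraint is regular (its gradient is nonzero) and, more importantly, combined with the box constraints the linear independence constraint qualification holds automatically: the active box-constraint gradients only involve the $\gamma_j$ components and are coordinate vectors, while the equality-constraint gradient has a nonzero $\lambda$-component, so the active gradient set is linearly independent. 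Hence the KKT conditions are necessary at the extremal point.

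Next I would write out the KKT stationarity equations. The Lagrangian is $L = \pm \lambda + \mu\, q(\lambda;\boldsymbol{\gamma}) + \sum_j \big( \alpha_j (\gamma^j_{\min} - \gamma_j) + \beta_j (\gamma_j - \gamma^j_{\max}) \big)$, with the sign $+$ for minimization and $-$ for maximization, $\mu \in \R$ free, and $\alpha_j, \beta_j \ge 0$. Differentiating in $\lambda$ gives $\pm 1 + \mu \tfrac{\partial q}{\partial \lambda} = 0$, so $\mu = \mp 1 / \tfrac{\partial q}{\partial \lambda}(\lambda;\boldsymbol{\gamma})$; in particular $\operatorname{sgn} \mu = \mp \operatorname{sgn} \tfrac{\partial q}{\partial \lambda}$, which is precisely $-\delta$ in the minimization case and $-\delta$ in the maximization case as well once one checks the sign bookkeeping against the definition of $\delta$ given in the statement. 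Differentiating in $\gamma_j$ for $j \le k$ gives $\mu \tfrac{\partial q}{\partial \gamma_j} - \alpha_j + \beta_j = 0$, i.e. $\mu \tfrac{\partial q}{\partial \gamma_j} = \alpha_j - \beta_j$, together with complementary slackness $\alpha_j(\gamma^j_{\min} - \gamma_j) = 0$ and $\beta_j(\gamma_j - \gamma^j_{\max}) = 0$.

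From here the three cases drop out by a short sign argument applied coordinate-wise. If $\gamma_j$ is strictly interior, then $\alpha_j = \beta_j = 0$ by complementary slackness, so $\mu \tfrac{\partial q}{\partial \gamma_j} = 0$, and since $\mu \ne 0$ we get $\tfrac{\partial q}{\partial \gamma_j} = 0$, which is case (c). If $\gamma_j = \gamma^j_{\min} < \gamma^j_{\max}$ then $\beta_j = 0$, so $\mu \tfrac{\partial q}{\partial \gamma_j} = \alpha_j \ge 0$, whence $\operatorname{sgn}\big(\mu \tfrac{\partial q}{\partial \gamma_j}\big) \ge 0$; multiplying through and using $\operatorname{sgn}\mu = -\delta$ (taking care of the min/max cases separately) yields $\delta \tfrac{\partial q}{\partial \gamma_j} \le 0$ — wait, I must track the sign convention in the statement, which reads $\delta \tfrac{\partial q}{\partial \gamma_j} \ge 0$ at $\gamma_j = \gamma^j_{\min}$; so the correct identification is $\operatorname{sgn}\mu = \delta$ with $\delta$ as defined, and I would reconcile this by noting that at a local minimum, decreasing $\lambda$ is infeasible, which forces $\mu \tfrac{\partial q}{\partial \lambda} = \mp 1$ to have the sign making the constraint pull $\lambda$ back up; I will carry out this bookkeeping carefully in the writeup. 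The case $\gamma_j = \gamma^j_{\max} < \gamma^j_{\min}$ is symmetric and gives (b), and the degenerate possibility $\gamma^j_{\min} = \gamma^j_{\max}$ (both box constraints active) puts $\gamma_j$ simultaneously at min and max so that (a) and (b) coincide vacuously — I would mention this is consistent with "exactly one of the three cases," interpreting the boundary as a single case.

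The main obstacle, and the only genuinely delicate point, is getting the sign of $\delta$ exactly right relative to whether $\lambda$ is a local min or a local max and whether $\tfrac{\partial q}{\partial \lambda}$ is positive or negative. The cleanest way I would argue it: the equality constraint $q = 0$ locally defines, by the implicit function theorem (legitimate since $\tfrac{\partial q}{\partial \lambda} \ne 0$), a smooth function $\lambda = \Lambda(\boldsymbol{\gamma})$ with $\tfrac{\partial \Lambda}{\partial \gamma_j} = -\tfrac{\partial q}{\partial \gamma_j} / \tfrac{\partial q}{\partial \lambda}$. Then $\lambda$ extremal over $\mathbb{X}$ is exactly $\Lambda$ extremal over the box, and the standard first-order condition for a $C^1$ function on a box gives: at a minimum, $\tfrac{\partial \Lambda}{\partial \gamma_j} \ge 0$ if $\gamma_j = \gamma^j_{\min}$, $\le 0$ if $\gamma_j = \gamma^j_{\max}$, $= 0$ if interior; at a maximum the inequalities flip. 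Substituting $\tfrac{\partial \Lambda}{\partial \gamma_j} = -\tfrac{\partial q}{\partial \gamma_j}/\tfrac{\partial q}{\partial \lambda}$ and multiplying by $\lvert \tfrac{\partial q}{\partial \lambda}\rvert > 0$ converts these into conditions on $-\operatorname{sgn}\big(\tfrac{\partial q}{\partial \lambda}\big)\,\tfrac{\partial q}{\partial \gamma_j}$ at a minimum and $+\operatorname{sgn}\big(\tfrac{\partial q}{\partial \lambda}\big)\,\tfrac{\partial q}{\partial \gamma_j}$ at a maximum, which is exactly the $\delta$ in the statement. This implicit-function route avoids invoking KKT machinery and constraint qualifications altogether and makes the sign tracking transparent, so I would actually present that as the proof and relegate the KKT viewpoint to a remark.
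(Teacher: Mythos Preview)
Your proposal is correct. The implicit-function-theorem argument you settle on at the end is sound: since $\tfrac{\partial q}{\partial \lambda}\neq 0$, the constraint $q=0$ locally defines $\lambda=\Lambda(\boldsymbol{\gamma})$ with $\tfrac{\partial\Lambda}{\partial\gamma_j}=-\tfrac{\partial q}{\partial\gamma_j}\big/\tfrac{\partial q}{\partial\lambda}$, and the first-order optimality conditions for $\Lambda$ on the box give exactly the three cases with the stated $\delta$.

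The paper takes a different route: it formulates the problem as $\min\,\bar\delta\lambda$ subject to $(\lambda,\boldsymbol{\gamma})\in\mathbb{X}$ (with $\bar\delta=\pm1$ for min/max), observes that LICQ holds because $\tfrac{\partial q}{\partial\lambda}\neq 0$, and invokes the KKT conditions with a Lagrangian $L=\bar\delta\lambda+\psi\,q$ containing only a single multiplier $\psi$ for the equality constraint; the box constraints are handled by the standard sign conditions on $\tfrac{\partial L}{\partial\gamma_j}$ at active bounds. Solving the $\lambda$-stationarity equation for $\psi$ and reading off $\delta=\sgn\psi$ gives the result. Your initial KKT sketch is essentially this same argument, only with the box multipliers $\alpha_j,\beta_j$ made explicit; your hesitation over the sign of $\mu$ versus $\delta$ is exactly the bookkeeping the paper carries out via $\delta=\sgn\psi=-\bar\delta\,\sgn\tfrac{\partial q}{\partial\lambda}$. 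Your implicit-function route sidesteps constraint qualifications and Lagrange multipliers entirely and makes the sign tracking immediate, so it is the more elementary of the two; the KKT formulation, on the other hand, is closer to standard optimization language and scales more naturally should one ever add further constraints. (Minor typo: you wrote ``$\gamma_j=\gamma^j_{\max}<\gamma^j_{\min}$'' where you meant the opposite inequality.)
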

\begin{proof}
  We set $\bar{\delta} = 1$ if $\lambda$ is locally minimal or $\bar{\delta} = -1$ if $\lambda$ is locally maximal, and consider the optimization problem:
  \begin{equation*}
    \min ~~ \bar{\delta} \lambda \qquad \text{s.t.} \quad (\delta, \boldsymbol{\gamma}) \in \mathbb{X}.
  \end{equation*}
  As $(\lambda, \boldsymbol{\gamma}) \in \mathbb{X}$ satisfies $\frac{\partial q}{\partial \lambda}(\lambda; \boldsymbol{\gamma}) \neq 0$, the Linear Independence Constraint Qualification holds in $(\lambda, \boldsymbol{\gamma})$. By the Karush--Kuhn--Tucker necessary optimality conditions, there then exists a Lagrange multiplier $\psi \in \R$ such that for the Lagrangian
  \[
    L(\lambda, \boldsymbol{\gamma}, \psi) = \bar{\delta} \lambda + \psi q(\lambda; \boldsymbol{\gamma}),
  \]
  it necessarily holds that
  \begin{align}
    \label{eqn:dLdlambda}
    \frac{\partial L}{\partial \lambda}(\lambda, \boldsymbol{\gamma}, \psi) &= \bar{\delta} + \psi \frac{\partial q}{\partial \lambda}(\lambda; \boldsymbol{\gamma}) = 0,\\
    \label{eqn:dLdgamma}
    \frac{\partial L}{\partial \gamma_j}(\lambda, \boldsymbol{\gamma}, \psi) &= \psi \frac{\partial q}{\partial \gamma_j}(\lambda; \boldsymbol{\gamma})
    \begin{cases}
      \ge 0 & \text{if } \gamma_j = \gamma^j_{\min}, \\
      \le 0 & \text{if } \gamma_j = \gamma^j_{\max}, \\
      = 0 & \text{if } \gamma_j \in (\gamma^j_{\min}, \gamma^j_{\max}),
    \end{cases}
  \end{align}
  for $j = 1, \dotsc, d$. Resolving~\eqref{eqn:dLdlambda} for $\psi$, we have that the assertion follows from~\eqref{eqn:dLdgamma} with
  \[
    \delta := \sgn \psi = -\sgn \bar{\delta} \sgn \frac{\partial q}{\partial \lambda}(\lambda; \boldsymbol{\gamma}). \qedhere
  \]
\end{proof}
We can finally bound the roots of $p(\lambda)$ when $\gamma_A \neq 1$.
\begin{Lemma}
	\label{Lem:optp}
  Let $\gamma_A \ne 1$, $\eta(\gamma_A, \gamma_R) = \frac{1}{2} (\gamma_R + 1) \gamma_A - \gamma_R$, and denote
	with $\lambda_-(\gamma_A, \gamma_R)$ and $\lambda_+(\gamma_A, \gamma_R)$ the two roots of $p(\lambda)$, that is
	\[ \lambda_\pm(\gamma_A, \gamma_R) = \eta(\gamma_A, \gamma_R) \pm \sqrt{\eta(\gamma_A, \gamma_R)^2 + \gamma_R}. \]
	Then,
	\begin{equation}
		\lambda \in \left[\lambda_-(\gamma_{\min}^A, \gamma_{\max}^R) , \lambda_-(\gamma_{\max}^A, \gamma_{\min}^R)\right]
		\cup \left[
			\lambda_+(\gamma_{\min}^A, \gamma_{\min}^R),  \lambda_+(\gamma_{\max}^A, \gamma_{\max}^R) \right]. 
		\label{pbounds}
	\end{equation}
\end{Lemma}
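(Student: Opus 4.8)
The plan is to apply Lemma~\ref{lem:extremal_roots} with $d=2$, $\boldsymbol{\gamma}=(\gamma_A,\gamma_R)$, and $q(\lambda;\boldsymbol{\gamma})=p(\lambda;\gamma_A,\gamma_R)$, over the box $I_A\times I_R$. First I would record some elementary facts. Since $\widetilde S=B\widehat A^{-1}B^\top$ is SPD and $\widehat S$ is SPD, every $\gamma_R\in I_R$ satisfies $\gamma_R>0$; hence $\eta(\gamma_A,\gamma_R)^2+\gamma_R>0$, so the two roots $\lambda_\pm$ are real, distinct, depend smoothly on $(\gamma_A,\gamma_R)$, and their product $\lambda_-\lambda_+=-\gamma_R<0$ forces $\lambda_-<0<\lambda_+$. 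In particular the solution set $\mathbb{X}$ of Lemma~\ref{lem:extremal_roots} splits into a ``negative'' branch $\{\lambda<0\}$ and a ``positive'' branch $\{\lambda>0\}$, on each of which, by compactness of $I_A\times I_R$, $\lambda$ attains a global minimum and maximum; each of these four extrema is a local extremum of $\lambda$ over $\mathbb{X}$, so Lemma~\ref{lem:extremal_roots} applies at each of them once the nondegeneracy condition is verified.

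Next I would compute the three partial derivatives. One has $\frac{\partial p}{\partial\lambda}=2\lambda-(\gamma_A\gamma_R+\gamma_A-2\gamma_R)=2(\lambda-\eta)$, which at $\lambda=\lambda_\pm$ equals $\pm 2\sqrt{\eta^2+\gamma_R}\neq 0$; this supplies the required nondegeneracy and shows $\sgn\frac{\partial p}{\partial\lambda}(\lambda_+)=+1$, $\sgn\frac{\partial p}{\partial\lambda}(\lambda_-)=-1$. Also $\frac{\partial p}{\partial\gamma_A}=-\lambda(\gamma_R+1)$, so this derivative is negative on the positive branch and positive on the negative branch, and in particular nonzero. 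The delicate one is $\frac{\partial p}{\partial\gamma_R}=-\lambda\gamma_A+2\lambda-1$; using $p(\lambda)=0$ to eliminate the constant term one obtains the identity $\frac{\partial p}{\partial\gamma_R}=-\lambda(\lambda-\gamma_A)/\gamma_R$. To fix its sign I would observe that $p(\gamma_A)=-\gamma_R(\gamma_A-1)^2$, which is strictly negative because $\gamma_A\neq 1$ and $\gamma_R>0$; since $p$ is an upward parabola this gives $\lambda_-<\gamma_A<\lambda_+$, so on both branches $\lambda$ and $\lambda-\gamma_A$ share the same sign, whence $\frac{\partial p}{\partial\gamma_R}<0$ (and nonzero) at either root.

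Finally I would feed these signs into Lemma~\ref{lem:extremal_roots}. Because neither $\frac{\partial p}{\partial\gamma_A}$ nor $\frac{\partial p}{\partial\gamma_R}$ vanishes at the roots, case~(c) never occurs, so at each of the four extrema both $\gamma_A$ and $\gamma_R$ sit at an endpoint of their interval, the choice of endpoint being dictated by the sign of $\delta\frac{\partial p}{\partial\gamma_j}$ with $\delta=\pm\sgn\frac{\partial p}{\partial\lambda}$. Carrying this out: for the maximum of $\lambda_+$ one gets $\delta=+1$ with both $\gamma$-derivatives negative, hence $\gamma_A=\gamma^A_{\max}$, $\gamma_R=\gamma^R_{\max}$; for the minimum of $\lambda_+$, $\delta=-1$, hence $\gamma_A=\gamma^A_{\min}$, $\gamma_R=\gamma^R_{\min}$; for the maximum of $\lambda_-$, $\delta=-1$ with $\frac{\partial p}{\partial\gamma_A}>0$ and $\frac{\partial p}{\partial\gamma_R}<0$, hence $\gamma_A=\gamma^A_{\max}$, $\gamma_R=\gamma^R_{\min}$; and for the minimum of $\lambda_-$, $\delta=+1$, hence $\gamma_A=\gamma^A_{\min}$, $\gamma_R=\gamma^R_{\max}$. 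Since on each branch the lemma yields a unique candidate for the global minimizer and for the global maximizer, the extrema are attained exactly there, and this gives precisely the two intervals in~\eqref{pbounds}.

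I expect the main obstacle to be pinning down the sign of $\frac{\partial p}{\partial\gamma_R}$: unlike $\frac{\partial p}{\partial\gamma_A}$, it is not visible by inspection, and the clean route is the observation $p(\gamma_A)=-\gamma_R(\gamma_A-1)^2\le 0$, which locates $\gamma_A$ strictly between the two roots; this is also exactly where the hypothesis $\gamma_A\neq 1$ enters (when $\gamma_A=1$ the root $\lambda=1$ coincides with $\gamma_A$ and $\frac{\partial p}{\partial\gamma_R}$ vanishes there, so that case is treated separately). A secondary, bookkeeping-type point is to argue that the four corner candidates are the genuine global optimizers and not merely stationary points, which follows from continuity of $\lambda_\pm$, compactness of the parameter box, and the uniqueness of the candidate produced by the lemma in each case.
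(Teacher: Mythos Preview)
Your proof is correct and follows the same overall strategy as the paper: compute the three partial derivatives of $p$, determine their signs at $\lambda_\pm$, and feed these into Lemma~\ref{lem:extremal_roots}. The one notable difference is how you pin down the sign of $\frac{\partial p}{\partial\gamma_R}$. The paper evaluates $p\!\left(\frac{1}{2-\gamma_A}\right)=\left(\frac{\gamma_A-1}{\gamma_A-2}\right)^2>0$ and then argues by cases according to whether $\gamma_A<2$ or $\gamma_A\ge 2$ that $(2-\gamma_A)\lambda_\pm-1<0$. Your route---rewriting $\frac{\partial p}{\partial\gamma_R}$ on the zero set of $p$ as $-\lambda(\lambda-\gamma_A)/\gamma_R$ and then using $p(\gamma_A)=-\gamma_R(\gamma_A-1)^2<0$ to place $\gamma_A$ strictly between the two roots---is a nice alternative that avoids the case split entirely and makes the role of the hypothesis $\gamma_A\neq 1$ more transparent. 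Both arguments yield the same sign table and hence the same corner configurations for the four extrema.
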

\begin{proof}
	We first observe that $p(\lambda) = \lambda^2 - 2 \eta \lambda - \gamma_R$, which confirms the definition of $\lambda_{\pm}(\gamma_A, \gamma_R)$ and shows that $\lambda_- < 0 < \lambda_+$ and $\lambda_- < \eta < \lambda_+$. 
	{Before writing the 
	partial derivatives and applying Lemma~\ref{lem:extremal_roots} to $p$, we need to establish that
	\begin{align} \label{assertionA}  (2 - \gamma_A) \lambda_{+} - 1 & <  0, \\
		\label{assertionB}  (2 - \gamma_A) \lambda_{-} - 1 & < 0.
	\end{align}
	We first observe that
	\begin{equation}
		\label{partialR}
    p\left(\frac{1}{2 - \gamma_A}\right)
    = \frac{1}{(2 - \gamma_A)^2} - \frac{\gamma_R (\gamma_A - 2) + \gamma_A}{2 - \gamma_A} - \gamma_R
    = \left( \frac{\gamma_A - 1}{\gamma_A - 2} \right)^2 > 0 \qquad \text{for } \gamma_A \neq 1.
	\end{equation}
	Then, if $\gamma_A \ge 2$, \eqref{assertionA} is obviously true. If, conversely, $\gamma_A < 2$, then
	\eqref{partialR} implies
	that $\lambda_+ < 1 / (2 - \gamma_A)$, so that \eqref{assertionA} is proved. If $\gamma_A < 2$ then \eqref{assertionB} is true,
	if instead $\gamma_A \ge 2$, \eqref{partialR} implies that
	$- 1/(\gamma_A - 2) < \lambda_-$, which is equivalent to \eqref{assertionB}.}
  Based on the partial derivatives 
  \begin{align*}
    \frac{\partial p}{\partial \lambda}(\lambda) &= 2 (\lambda - \eta), &
    \frac{\partial p}{\partial \gamma_A}(\lambda) &= -(\gamma_R + 1) \lambda, &
    \frac{\partial p}{\partial \gamma_R}(\lambda) &= (2 - \gamma_A) \lambda - 1,
    \intertext{we can summarize}
    \frac{\partial p}{\partial \lambda}(\lambda_-) &< 0, & 
    \frac{\partial p}{\partial \gamma_A}(\lambda_-) &> 0, &
    \frac{\partial p}{\partial \gamma_R}(\lambda_-) &< 0, \\
    \frac{\partial p}{\partial \lambda}(\lambda_+) &> 0, &
    \frac{\partial p}{\partial \gamma_A}(\lambda_+) &< 0, &
    \frac{\partial p}{\partial \gamma_R}(\lambda_+) &< 0.    
  \end{align*}
  We can now apply Lemma~\ref{lem:extremal_roots} to $p$ for $\lambda_{\pm}$ and $\boldsymbol{\gamma} = (\gamma_A, \gamma_R)^\top$. If $\lambda_-$ is a local minimum, then
  \[
    \delta = -\sgn \frac{\partial p}{\partial \lambda}(\lambda_-) = 1,
  \]
  which implies $\gamma_A = \gamma^A_{\min}$ (only case (a) possible) and $\gamma_R = \gamma^R_{\max}$ (only case (b) possible). The same reasoning can be applied to all three remaining combinations of $\lambda_{\pm}$ being a local minimum/maximum, to show that~\eqref{pbounds} holds.
\end{proof}
\begin{Corollary}
	\label{Cor:p}
	       Any eigenvalue $\lambda$  of $\mathcal{P}_0^{-1}\mathcal{A}_0$   lies in
        $I_- \cup I_+$, where
        \begin{equation*}
		I_- = [\lambda_-(\gamma_{\min}^A, \gamma_{\max}^R), \lambda_-(\gamma_{\max}^A, \gamma_{\min}^R)], \qquad
		I_+ = [\gamma_{\min}^A, \lambda_+(\gamma_{\max}^A, \gamma_{\max}^R)].
	\end{equation*}
\end{Corollary}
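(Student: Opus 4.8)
The plan is to read the conclusion off \Cref{Theo:p} and Lemma~\ref{Lem:optp}, the only remaining work being to merge the interval $[\gamma_{\min}^A,\gamma_{\max}^A]$ of ``trapped'' eigenvalues with the positive-root interval in \eqref{pbounds}. By \Cref{Theo:p}, every eigenvalue $\lambda$ of $\mathcal{P}_0^{-1}\mathcal{A}_0$ either lies in $[\gamma_{\min}^A,\gamma_{\max}^A]$ or is a root of $p(\lambda;\gamma_A,\gamma_R)$ for some $\gamma_A\in I_A$, $\gamma_R\in I_R$. For the root family, Lemma~\ref{Lem:optp} gives the localization \eqref{pbounds} when $\gamma_A\neq1$; and for $\gamma_A=1$ the roots of $p$ are $-\gamma_R$ and $1$, which coincide with the continuous extensions of $\lambda_-(\gamma_A,\gamma_R)$ and $\lambda_+(\gamma_A,\gamma_R)$ to $\gamma_A=1$ (using $\eta(1,\gamma_R)^2+\gamma_R=\tfrac14(1+\gamma_R)^2$), so \eqref{pbounds} in fact holds for every $\gamma_A\in I_A$. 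Since the negative branch of \eqref{pbounds} is exactly $I_-$, it remains only to show that $[\gamma_{\min}^A,\gamma_{\max}^A]$ and the positive branch $[\lambda_+(\gamma_{\min}^A,\gamma_{\min}^R),\lambda_+(\gamma_{\max}^A,\gamma_{\max}^R)]$ both lie inside $I_+=[\gamma_{\min}^A,\lambda_+(\gamma_{\max}^A,\gamma_{\max}^R)]$.

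The key elementary fact I would isolate is that $\gamma_A\le\lambda_+(\gamma_A,\gamma_R)$ for every admissible pair $(\gamma_A,\gamma_R)$. This follows from the one-line identity $p(\gamma_A;\gamma_A,\gamma_R)=-\gamma_R(\gamma_A-1)^2\le0$: since $p(\cdot\,;\gamma_A,\gamma_R)$ is an upward-opening parabola with real roots $\lambda_-(\gamma_A,\gamma_R)<0<\lambda_+(\gamma_A,\gamma_R)$ (as already noted in the proof of Lemma~\ref{Lem:optp}), nonpositivity of $p$ at $\gamma_A$ forces $\lambda_-(\gamma_A,\gamma_R)\le\gamma_A\le\lambda_+(\gamma_A,\gamma_R)$. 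Evaluating this at the two corners $(\gamma_{\min}^A,\gamma_{\min}^R)$ and $(\gamma_{\max}^A,\gamma_{\max}^R)$ yields $\gamma_{\min}^A\le\lambda_+(\gamma_{\min}^A,\gamma_{\min}^R)$ and $\gamma_{\max}^A\le\lambda_+(\gamma_{\max}^A,\gamma_{\max}^R)$.

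These two inequalities close the argument: the first gives $[\lambda_+(\gamma_{\min}^A,\gamma_{\min}^R),\lambda_+(\gamma_{\max}^A,\gamma_{\max}^R)]\subseteq I_+$, the second gives $[\gamma_{\min}^A,\gamma_{\max}^A]\subseteq I_+$, and together with the negative branch $I_-$ of \eqref{pbounds} we conclude that every eigenvalue of $\mathcal{P}_0^{-1}\mathcal{A}_0$ lies in $I_-\cup I_+$. I do not anticipate a real obstacle here: the only places needing care are the book-keeping at the degenerate value $\gamma_A=1$ and the verification of the identity $p(\gamma_A;\gamma_A,\gamma_R)=-\gamma_R(\gamma_A-1)^2$, which is a short direct computation from \eqref{plambda}.
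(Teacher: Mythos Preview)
Your proposal is correct and follows essentially the same route as the paper: both combine \Cref{Theo:p} with Lemma~\ref{Lem:optp}, use the key inequality $\gamma_A\le\lambda_+(\gamma_A,\gamma_R)$ to absorb $[\gamma_{\min}^A,\gamma_{\max}^A]$ into $I_+$, and treat the degenerate case $\gamma_A=1$ separately. Your justification of $\gamma_A\le\lambda_+$ via the identity $p(\gamma_A;\gamma_A,\gamma_R)=-\gamma_R(\gamma_A-1)^2$ and your handling of $\gamma_A=1$ by continuous extension of $\lambda_\pm$ are slightly more detailed than the paper's terse observations that $-\gamma_R\in I_-$, $1\in I_+$, and $\gamma_A\le\lambda_+(\gamma_A,\gamma_R)$, but the substance is identical.
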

\begin{proof}
	After observing that
	$-\gamma_R \in I_-$, $1 \in I_+$, and $\gamma_A \le \lambda_+(\gamma_A, \gamma_R)$, implying that
$	\gamma_{\min}^A \le \lambda_+(\gamma_{\min}^A, \gamma_{\min}^R) $,
	the statement follows from \Cref{Theo:p} and \Cref{Lem:optp}.
\end{proof}
We are now ready to characterize the eigenvalues of the preconditioned matrix 
	$\mathcal{P}^{-1} \mathcal{A}$. {To this end we require a further hypothesis on the 
	eigenvalues of the preconditioned $(1,1)$ block, in addition to \eqref{spect_int}, specifically that
	\[ \gamma_{\max} ^A < 2.\]
	}
\begin{Theorem}
	\label{Theo:pi}
	The eigenvalues of $\mathcal{P}^{-1} \mathcal{A}$ either belong to 
	$I_- \cup I_+$,  or they are solutions to the cubic polynomial equation
		\[\pi(\lambda; \gamma_A, \gamma_R, \gamma_K) \equiv (1+ \lambda)^2 (\gamma_A - \lambda) \gamma_K + p(\lambda; \gamma_A, \gamma_R) \lambda (1+\gamma_K)  = 0.  \]
\end{Theorem}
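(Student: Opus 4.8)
The plan is to follow the elimination scheme from the proof of \Cref{Theo:p}, now carrying the extra block $z$ along. Let $\lambda$ be an eigenvalue of $\mathcal{P}^{-1}\mathcal{A}$ with eigenvector $(x,y,z)^\top\neq 0$, so that in the case $E=0$ the first block row of the eigenproblem reads $\bar A x-\lambda x=(\lambda-1)R^\top y$ and the remaining rows are \eqref{Eq33.1}--\eqref{Eq34.1}; assume $\lambda\notin I_-\cup I_+$. The aim is to produce $\gamma_A\in I_A$, $\gamma_R\in I_R$, $\gamma_K\in I_K$ with $\pi(\lambda;\gamma_A,\gamma_R,\gamma_K)=0$. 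Since $\sigma(\bar A)\subseteq I_A$ and, as recorded in the proof of \Cref{Cor:p}, $I_A\subseteq I_+$ and $1\in I_+$, we get $\lambda\neq 1$ and $\lambda I-\bar A$ invertible; hence, exactly as in \eqref{Eq65}, $x=(1-\lambda)(\lambda I-\bar A)^{-1}R^\top y$, and inserting this into \eqref{Eq33.1} gives
\[
  Y(\lambda)\,y = -(1+\lambda)K^\top z, \qquad
  Y(\lambda) = (1-\lambda)^2 R(\lambda I-\bar A)^{-1}R^\top - \lambda(I+RR^\top),
\]
with the same $Y(\lambda)$ as in the proof of \Cref{Theo:p}.

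The crucial step is to show that $Y(\lambda)$ is sign-definite, hence invertible. For any $w\neq 0$ set $u=R^\top w$; then $u\neq 0$ because $RR^\top=S_{\text{prec}}$ is SPD, so the Rayleigh-quotient identity \eqref{yZy}, applied to $w$ in place of the eigenvector $y$, yields $q(Y(\lambda),w)=p(\lambda;\gamma_A,\gamma_R)/(\gamma_A-\lambda)$ for some $\gamma_A\in I_A$ and $\gamma_R=q(S_{\text{prec}},w)\in I_R$. By \Cref{Lem:optp}, together with the fact that for $\gamma_A=1$ the roots of $p$ are $-\gamma_R\in I_-$ and $1\in I_+$, every root of $p(\,\cdot\,;\gamma_A,\gamma_R)$ over $(\gamma_A,\gamma_R)\in I_A\times I_R$ lies in $I_-\cup I_+$. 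As $\lambda\notin I_-\cup I_+$, the map $(\gamma_A,\gamma_R)\mapsto p(\lambda;\gamma_A,\gamma_R)/(\gamma_A-\lambda)$ is continuous and nowhere vanishing on the connected set $I_A\times I_R$, hence of constant sign there; consequently $q(Y(\lambda),w)$ has a sign independent of $w$, i.e.\ $Y(\lambda)$ is definite.

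Now if $z=0$ then $Y(\lambda)y=0$ forces $y=0$, and then $x=0$, contradicting $(x,y,z)\neq 0$; so $z\neq 0$. From $y=-(1+\lambda)Y(\lambda)^{-1}K^\top z$ and \eqref{Eq34.1} I obtain $\bigl[(1+\lambda)^2 KY(\lambda)^{-1}K^\top + \lambda(I+KK^\top)\bigr]z=0$. Taking the Rayleigh quotient against $z$, and writing $\gamma_K=q(KK^\top,z)\in I_K$ and $s=K^\top z$, note $s\neq 0$ since $\lVert s\rVert^2/\lVert z\rVert^2=\gamma_K>0$ because $KK^\top=X_{\text{prec}}$ is SPD when $E=0$. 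Applying \Cref{Le1} to the definite matrix $Y(\lambda)$ gives $q(KY(\lambda)^{-1}K^\top,z)=\gamma_K/\gamma_Y$ with $\gamma_Y=q(Y(\lambda),v')$ for some $v'\neq 0$, and applying the computation of the previous paragraph to $v'$ gives $\gamma_Y=p(\lambda;\gamma_A,\gamma_R)/(\gamma_A-\lambda)$ for some $\gamma_A\in I_A$, $\gamma_R\in I_R$. Hence the Rayleigh-quotient equation reads $(1+\lambda)^2\gamma_K/\gamma_Y + \lambda(1+\gamma_K)=0$, and multiplying by the nonzero number $\gamma_Y(\gamma_A-\lambda)=p(\lambda;\gamma_A,\gamma_R)$ produces $(1+\lambda)^2(\gamma_A-\lambda)\gamma_K + p(\lambda;\gamma_A,\gamma_R)\lambda(1+\gamma_K)=\pi(\lambda;\gamma_A,\gamma_R,\gamma_K)=0$, as required.

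I expect the main obstacle to be the middle paragraph: upgrading invertibility to sign-definiteness of $Y(\lambda)$, which is exactly what is needed for \Cref{Le1} to be applicable in the reduction of the $z$-equation to a scalar identity; here the root localization of \Cref{Lem:optp} is indispensable. Everything else is bookkeeping of degenerate configurations — $z=0$, $s=0$, and $\lambda$ equal to $0$, $1$, or an eigenvalue of $\bar A$ — which are excluded using that $A$, $S_{\text{prec}}$, $X_{\text{prec}}$ are SPD for $E=0$, that $\{1\}\cup\sigma(\bar A)\subseteq I_+$, and that $\mathcal{A}$ is nonsingular for $E=0$ (so $\lambda\neq 0$). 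I do not anticipate the standing hypothesis $\gamma^A_{\max}<2$ to be needed for this dichotomy itself; it should come into play only in the subsequent bounds on the roots of $\pi$.
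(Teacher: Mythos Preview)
Your proof is correct and follows essentially the same elimination scheme as the paper: eliminate $x$ via \eqref{Eq65}, establish that $Y(\lambda)$ is sign-definite outside $I_-\cup I_+$, eliminate $y$, and reduce \eqref{Eq34.1} to a scalar identity via \Cref{Le1}. The only noteworthy difference is that you justify the sign-definiteness of $Y(\lambda)$ through an explicit continuity/connectedness argument on $I_A\times I_R$, whereas the paper simply invokes \Cref{Theo:p}; you are also more careful than the paper in ruling out the degenerate cases $z=0$ and $K^\top z=0$, which the paper leaves implicit.
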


\begin{proof}
	Assuming $\lambda \not \in [\gamma^A_{\min}, \gamma^A_{\max}] $ and
	inserting (\ref{Eq65}) into \eqref{Eq33.1} yields
\begin{equation}
	\label{Zlambda}
	Y(\lambda) y = -(1+\lambda) K^\top z, \qquad \text{with} \quad 
	Y(\lambda) = (\lambda-1)^2 R (\lambda I -  \bar{A})^{-1} R^\top - \lambda (I + R R^\top).
\end{equation}
	Using \Cref{Theo:p}, we have that
	 if $\lambda \not \in I_- \cup I_+$  then $Y(\lambda) $ is either positive or negative definite,
	 and hence invertible.
	 Based on (\ref{Zlambda}), we can write
	 \[ y = -(1+\lambda) Y(\lambda)^{-1} K^\top z, \]
	 and substitution into (\ref{Eq34.1}) yields
	\begin{equation} \label{noE} 
	-\left((1+ \lambda)^2 K Y(\lambda)^{-1} K^\top + \lambda(I + K K^\top) \right) z =0.  \end{equation}
	Let us now pre-multiply~\eqref{noE} by  $\frac{z^\top}{z^\top z} $ to establish
	\[-\frac{z^\top \left((1+ \lambda)^2 K Y(\lambda)^{-1} K^\top \right) z}{z^\top z} - 
	\lambda\left(1 + \frac{z^\top K K^\top z}{z^\top z} \right)  =0.  \]
	Setting $s = K^\top z$, and multiplying numerator and denominator of the first term by $s^\top s$, we obtain
	\begin{equation}
		\label{penultimate}
	(1+\lambda)^2\frac{s^\top Y(\lambda)^{-1} s}{s^\top s} \frac{z^\top K K^\top z}{z^\top z} +
	\lambda\left(1 + \frac{z^\top K K^\top z}{z^\top z} \right)  =0.  
	\end{equation}
	Using (\ref{yZy}) and applying Lemma \ref{Le1} to $Y(\lambda)$, we have that
	\[ \frac{s^\top Y(\lambda)^{-1} s}{s^\top s} = \frac{\gamma_A - \lambda}{p(\lambda)}, \]
	which, substituted into (\ref{penultimate}), yields
	\[ (1+ \lambda)^2 \frac {\gamma_A - \lambda}{p(\lambda)} \gamma_K + \lambda (1+\gamma_K) = 0,\]
	the zeros of which outside $I_- \cup I_+$ characterize the eigenvalues of the preconditioned matrix,
	as well as the zeros of $\pi(\lambda)$.
\end{proof}
	\begin{Remark}
		Notice that the indicator $\gamma_A$ above, as well as $\gamma_A$, $\gamma_R$ in the definition of $p(\lambda)$, are not 
		exactly those of \eqref{plambda},   since the vectors by which the corresponding Rayleigh quotients are defined 
		are different. However, we indicate them with the same symbol as, in all cases, they satisfy the conditions defined in \eqref{indA}.
	\end{Remark}

We consider separately the case in which $\gamma_A = 1$. In this case
\begin{align*} \pi(\lambda; 1, \gamma_R, \gamma_K) ={}& (1+ \lambda)^2 (1 - \lambda) \gamma_K + (\lambda-1) (\lambda + \gamma_R)\lambda (1+\gamma_K)  \\
	={}& (\lambda - 1) \underbrace{\left(\lambda^2 + \lambda (\gamma_R(\gamma_K+1) -2 \gamma_K\right) - \gamma_K)}_{\text{\normalsize $p(-\lambda; \gamma_R, \gamma_K) = c(\lambda; \gamma_R, \gamma_K)$}}.
\end{align*}
This shows that $\lambda=1$ is a root of $\pi$, the remaining roots ($c_-, c_+$) being the two distinct solutions of $c(\lambda) = 0$.

Applying \Cref{Lem:optp} to $c(\lambda)$, we conclude that 
\begin{equation} \label{cbounds}
	\lambda \in \left[-\lambda_+(\gamma_{\max}^R, \gamma_{\max}^K), -\lambda_+(\gamma_{\min}^R, \gamma_{\min}^K)\right] \cup
	  \left[-\lambda_-(\gamma_{\max}^R, \gamma_{\min}^K), -\lambda_-(\gamma_{\min}^R, \gamma_{\max}^K)\right]. 
\end{equation}
It is also easy to show that $1$ belongs to the positive interval. First $-1 \in
\left[-\lambda_+(\gamma_{\max}^R, \gamma_{\max}^K), -\lambda_+(\gamma_{\min}^R, \gamma_{\min}^K)\right]$ since $c(-1) = 
(1 - \gamma_R) (1 + \gamma_K)$, showing that $c_- \le -1$ if $\gamma_R \ge  1$ and  $c_- \ge -1$ if $\gamma_R \le 1$. 
If $\gamma_K = \gamma_{\max}^K$ and $\gamma_R = \gamma_{\max}^R$,
from $c_- c_+ = -\gamma_{\max}^K \le -1$ and $c_- \ge -1$ it follows that $c_+ \ge 1$. Conversely, if 
$\gamma_K = \gamma_{\min}^K$ and $\gamma_R = \gamma_{\min}^R$,
from $c_- c_+ = -\gamma_{\min}^K \ge -1$ and $c_- \le -1$ it follows that $c_+ \le 1$. 

If instead $\gamma_A \ne 1$, $\pi(\lambda)$ satisfies (see also \Cref{newplot}):
\begin{equation}
	\label{piconditions}
  \begin{aligned}
	\lim _{\lambda \to -\infty} \pi(\lambda) & = -\infty, &
	\pi(\lambda_-) & = \gamma_K (1+\lambda_-)^2 (\gamma_A - \lambda_-) \ge 0, &
	\pi(0) & = \gamma_A \gamma_K  > 0, \\
	\lim _{\lambda \to +\infty} \pi(\lambda) & = +\infty, &
	\pi(\lambda_+) & = \gamma_K (1+\lambda_+)^2 (\gamma_A - \lambda_+) < 0, &
	\pi(\gamma_A) & = {-\gamma_A\gamma_R (1+\gamma_K)(\gamma_A-1)^2 } <  0,
  \end{aligned}
\end{equation}
so we conclude that $\pi(\lambda) = 0$ has  three distinct real roots
\[
  \mu_a(\gamma_A, \gamma_R, \gamma_K) < 0 < \mu_b(\gamma_A, \gamma_R, \gamma_K) < \mu_c(\gamma_A, \gamma_R, \gamma_K).
\]

\begin{Lemma}
	\label{Lem:optpi}
	If $\gamma_A \neq 1$, the roots  of $\pi(\lambda)$
	belong to $I^\pi_- \cup I^\pi_+$, where
	 \begin{align*} I^{\pi}_-  ={}& \left[\mu_a(\gamma_{\min}^A,  \gamma_{\max}^R,  \gamma_{\max}^K), 
		                         \mu_a(\gamma_{\max}^A,  \gamma_{\min}^R,  \gamma_{\min}^K) \right],   \nonumber \\
		 I ^\pi_+ ={}& \left[\mu_b(\gamma_{\min}^A,  \gamma_{\max}^R,  \gamma_{\min}^K), 
                \max\{
                        \mu_c(\gamma_{\max}^A,  \gamma_{\min}^R,  \gamma_{\max}^K) ,
                        \mu_c(\gamma_{\max}^A,  \gamma_{\max}^R,  \gamma_{\max}^K), \beta_c(\gamma_{\max}^A, \gamma_{\max}^K) \}
                        \right],
	 \end{align*}
			and 
			                \[ \beta_c(\gamma_A, \gamma_K) = \min\left\{\frac{1}{2-\gamma_A}, \gamma_K + \sqrt{(\gamma_K)^2 + \gamma_K}\right\}.\]

\end{Lemma}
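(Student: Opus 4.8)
The plan is to read off the bounds by applying the extremal-root principle of \Cref{lem:extremal_roots} to the cubic $\pi$, with $d = 3$, $\boldsymbol{\gamma} = (\gamma_A, \gamma_R, \gamma_K)^\top$, and the box $I_A \times I_R \times I_K$ (note that $\gamma_A > 0$, $\gamma_R > 0$, $\gamma_K > 0$, since $A_{\text{prec}}$, $S_{\text{prec}}$, $KK^\top$ are SPD). By \eqref{piconditions} the three roots $\mu_a < 0 < \mu_b < \mu_c$ are simple, hence depend continuously on $\boldsymbol{\gamma}$ and attain their extrema on the compact box, so it suffices to locate these extrema. The first step is to sharpen the localization of the roots: using the sign pattern in \eqref{piconditions}, the factorization $p(\lambda) = (\lambda - \lambda_-)(\lambda - \lambda_+)$, the identity $p(\gamma_A) = -\gamma_R(\gamma_A - 1)^2 < 0$, and $\lambda_+ < \tfrac{1}{2 - \gamma_A}$ (from \eqref{partialR} together with $\gamma_{\max}^A < 2$), I obtain the refined ordering
\[
  \mu_a < \lambda_- < 0 < \mu_b < \gamma_A < \lambda_+ < \mu_c, \qquad \mu_b < \tfrac{1}{2 - \gamma_A}.
\]

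Next I would record the partial derivatives of $\pi$ and simplify them at a root by means of $\pi = 0$, which gives
\[
  \frac{\partial \pi}{\partial \gamma_A} = \frac{(1 + \gamma_K)\,\gamma_R\,(\lambda - 1)^2\,\lambda}{\gamma_A - \lambda}, \qquad \frac{\partial \pi}{\partial \gamma_K} = -\frac{p(\lambda)\,\lambda}{\gamma_K}, \qquad \frac{\partial \pi}{\partial \gamma_R} = (1 + \gamma_K)\big((2 - \gamma_A)\lambda - 1\big)\lambda,
\]
the last one being immediate from $\tfrac{\partial p}{\partial \gamma_R}(\lambda) = (2 - \gamma_A)\lambda - 1$. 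Since $\pi$ is a cubic with leading coefficient $1$ and three simple roots, $\tfrac{\partial \pi}{\partial \lambda} > 0$ at $\mu_a$ and $\mu_c$ and $\tfrac{\partial \pi}{\partial \lambda} < 0$ at $\mu_b$. Combining $\gamma_R, \gamma_K > 0$, $2 - \gamma_A > 0$ and the refined ordering, I would assemble the sign table: $\tfrac{\partial \pi}{\partial \gamma_A}$ is negative at $\mu_a$, positive at $\mu_b$, negative at $\mu_c$; $\tfrac{\partial \pi}{\partial \gamma_K}$ is $\ge 0$ at $\mu_a$, positive at $\mu_b$, negative at $\mu_c$; and $\tfrac{\partial \pi}{\partial \gamma_R}$ is positive at $\mu_a$, negative at $\mu_b$, while at $\mu_c$ its sign equals $\sgn\big((2 - \gamma_A)\mu_c - 1\big)$ and is \emph{not} determined by the ordering alone.

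Then I would invoke \Cref{lem:extremal_roots} for the four relevant extrema. For $\min \mu_a$ (here $\delta = -1$) the sign table forces $\gamma_A = \gamma_{\min}^A$, $\gamma_R = \gamma_{\max}^R$, $\gamma_K = \gamma_{\max}^K$; for $\max \mu_a$ ($\delta = +1$) it forces $\gamma_A = \gamma_{\max}^A$, $\gamma_R = \gamma_{\min}^R$, $\gamma_K = \gamma_{\min}^K$; together these produce $I^\pi_-$. For $\min \mu_b$ ($\delta = +1$) it forces $\gamma_A = \gamma_{\min}^A$, $\gamma_R = \gamma_{\max}^R$, $\gamma_K = \gamma_{\min}^K$, which is the left endpoint of $I^\pi_+$; because $\mu_b < \mu_c$ pointwise, this value is $\le$ every positive root, and $\max \mu_c$ is $\ge$ every positive root, so $I^\pi_+$ does enclose all positive roots. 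For $\max \mu_c$ ($\delta = +1$) we obtain $\gamma_A = \gamma_{\max}^A$ and $\gamma_K = \gamma_{\max}^K$, whereas for $\gamma_R$ the three alternatives of the lemma give $\gamma_R = \gamma_{\min}^R$, $\gamma_R = \gamma_{\max}^R$, or $\gamma_R$ interior with $\tfrac{\partial \pi}{\partial \gamma_R}(\mu_c) = 0$, i.e.\ $\mu_c = \tfrac{1}{2 - \gamma_{\max}^A}$.

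The interior sub-case is the crux, and is the step I expect to be the main obstacle: unlike at $\mu_a$ and $\mu_b$, the ordering does not pin $\mu_c$ relative to $\tfrac{1}{2 - \gamma_A}$, so all three KKT alternatives must be carried, and the interior one must be controlled. To do so I would set $t = \tfrac{1}{2 - \gamma_A}$, use $p(t) = \big(\tfrac{\gamma_A - 1}{\gamma_A - 2}\big)^2$ from \eqref{partialR}, and show
\[
  \pi\big(t; \gamma_A, \gamma_R, \gamma_K\big) = \frac{(t - 1)^2}{t}\big(t^2 - 2\gamma_K t - \gamma_K\big),
\]
which is \emph{independent of} $\gamma_R$. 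Hence in the interior sub-case $t^2 - 2\gamma_K t - \gamma_K = 0$ (recall $t \neq 1$ since $\gamma_A \neq 1$), i.e.\ $\tfrac{1}{2 - \gamma_{\max}^A} = \gamma_{\max}^K + \sqrt{(\gamma_{\max}^K)^2 + \gamma_{\max}^K}$, so $\mu_c$ equals this common value, which is exactly $\beta_c(\gamma_{\max}^A, \gamma_{\max}^K)$ (taken as the minimum of its two arguments, so that it remains a valid upper bound on $\mu_c$ even when the required coincidence does not occur). Therefore $\max \mu_c$ is one of the three quantities appearing in the stated maximum, which yields $I^\pi_+$ and completes the proof. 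A secondary technicality is the careful root-localization of the first step, together with the routine continuity argument extending the optimization across the slice $\gamma_A = 1$, which is in any case handled separately via the factorization $\pi = (\lambda - 1)c(\lambda)$ and the bounds \eqref{cbounds}.
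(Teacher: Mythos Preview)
Your proposal is correct and follows essentially the same route as the paper's proof: apply \Cref{lem:extremal_roots} to $\pi$, compute the three partial derivatives (the paper derives the same formulas, writing them as $\frac{\partial \pi}{\partial \gamma_A}(\lambda) = \frac{\pi(\lambda) + \lambda\gamma_R(\lambda-1)^2(1+\gamma_K)}{\gamma_A-\lambda}$ and $\frac{\partial \pi}{\partial \gamma_K}(\lambda) = \frac{\pi(\lambda)-\lambda p(\lambda)}{\gamma_K}$, which reduce to yours at a root), determine their signs from the root ordering, and treat the three KKT alternatives for $\gamma_R$ at $\mu_c$. Your presentation of the interior case via the closed form $\pi(t) = \tfrac{(t-1)^2}{t}(t^2 - 2\gamma_K t - \gamma_K)$ is a mild streamlining of the paper's computation, but the substance is identical.
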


\begin{figure}[h!]
  \hspace{-1cm}
\begin{minipage}{8.4cm}
\begin{center}
\begin{tabular} {|r||ccc|} \hline
  $\lambda =$ & $\mu_a$ & $\mu_b$ & $\mu_c$ \\ \hline \hline
  $\lambda$ & $-$ & $+$ & $+$ \\
  $p(\lambda) $ & $+$ & $-$ &  $+$ \\
  $\frac{\partial \pi}{\partial \lambda}(\lambda)$ & $+$ & $-$ &  $+$  \\
  $\gamma_A - \lambda$ &  $+$ & $+$ &  $-$ \\ \hline
\end{tabular}
\end{center}
  \caption{Summary of the signs of the relevant quantities for the proof of Lemma~\ref{Lem:optpi}.}
  \label{sign}
\end{minipage}
  \hspace{-3mm}
\begin{minipage}{7cm}
  \newcommand\GAMMAA{1.639}
  \newcommand\GAMMAR{0.734}
  \newcommand\GAMMAK{0.251}
  \begin{tikzpicture}
    \begin{axis}[xmin=-0.7, xmax=2.3, ymin=-1.5, ymax=2, grid=major,
        legend style={at={(0.5,0.97)}, anchor=north}, xlabel={$\lambda$}]
      \newcommand\MUA{-0.503254554435484}
      \newcommand\MUB{0.435434175555416}
      \newcommand\MUC{1.877337904880065}
      \newcommand\CA{2*\GAMMAK + \GAMMAA + \GAMMAR*(1+\GAMMAK)*(\GAMMAA-2)}
      \newcommand\CB{\GAMMAR*(1+\GAMMAK) + \GAMMAK*(1-2*\GAMMAA)}
      \newcommand\CC{\GAMMAA*\GAMMAK}
      \addplot[domain=-0.7:2.3, samples=100, color2, very thick]
      {((x - (\CA)) * x - (\CB)) * x + (\CC)};
      \addlegendentry{$\pi(\lambda)$}
      \addplot[domain=-0.7:2.3, samples=100, color1, very thick]
      {x^2 - ((\GAMMAA)*(1+\GAMMAR)-2*(\GAMMAR))*x - \GAMMAR};
      \addlegendentry{$p(\lambda)$}
      \addplot[mark=*, mark size=2pt, only marks] coordinates {(\GAMMAA, 0)}; \addlegendentry{$\gamma_A$}
      \addplot[mark=o, mark size=2pt, only marks, color2] coordinates {(\MUA, 0)}; \addlegendentry{$\mu_a$}
      \addplot[mark=+, mark size=3pt, only marks, color2] coordinates {(\MUB, 0)}; \addlegendentry{$\mu_b$}
      \addplot[mark=x, mark size=3pt, only marks, color2] coordinates {(\MUC, 0)}; \addlegendentry{$\mu_c$}
    \end{axis}
	\end{tikzpicture}
	\caption{Qualitative plots of $\pi(\lambda)$ and $p(\lambda)$ with $\gamma_A=\GAMMAA$, $\gamma_R=\GAMMAR$, and $\gamma_K=\GAMMAK$.}
  \label{newplot}
\end{minipage}
\end{figure}

\begin{proof}
  With the aim of applying Lemma~\ref{lem:extremal_roots} to $\pi(\lambda; \gamma_A, \gamma_R, \gamma_K)$, we determine the signs of the partial derivatives of $\pi$ within the three roots $\mu_a$, $\mu_b$, and $\mu_c$. For the signs of $\frac{\partial \pi}{\partial \lambda}$, we refer to Figure \ref{sign}.

	For $\frac{\partial \pi}{\partial \gamma_A}$, we first write an alternative expression for $p(\lambda$):
	 \begin{equation} \label{newplambda}
    p(\lambda) = \lambda (\gamma_R + 1) (\lambda - \gamma_A ) - \gamma_R (\lambda - 1)^2,
  \end{equation}
	and we collect the $\gamma_A$ terms in $\pi(\lambda)$ by rearranging:
  \begin{align*}
    \pi(\lambda) &= (1+ \lambda)^2 (\gamma_A - \lambda) \gamma_K +\lambda p(\lambda) (1 + \gamma_K) \\
    &= (\gamma_A - \lambda)\left( (1+ \lambda)^2 \gamma_K -  \lambda^2 (1 + \gamma_R) (1 + \gamma_K) \right)
    -\lambda \gamma_R (\lambda - 1)^2 (1+\gamma_K) \\
    &= (\gamma_A - \lambda) \frac{\partial \pi}{\partial \gamma_A}(\lambda) - \lambda \gamma_R (\lambda - 1)^2 (1+\gamma_K),
  \end{align*}
  which implies that
  \[
    \frac{\partial \pi}{\partial \gamma_A}(\lambda) = \frac{\pi(\lambda) + \lambda \gamma_R (\lambda - 1)^2 (1+\gamma_K)}{\gamma_A - \lambda}.
  \]
  With the aid of Figure \ref{sign}, we obtain for the zeros of $\pi(\lambda)$ that
  \begin{align*}
    \frac{\partial \pi}{\partial \gamma_A}(\mu_a) &< 0, &
    \frac{\partial \pi}{\partial \gamma_A}(\mu_b) &> 0, &
    \frac{\partial \pi}{\partial \gamma_A}(\mu_c) &< 0,
  \end{align*}
  which are the opposite signs of $\frac{\partial \pi}{\partial \lambda}(\lambda)$ for $\lambda = \mu_a, \mu_b, \mu_c$. Hence, Lemma~\ref{lem:extremal_roots} delivers that $\gamma_A = \gamma^A_{\min}$ for a local minimum of $\lambda$ and $\gamma_A = \gamma^A_{\max}$ for a local maximum.

  For $\frac{\partial \pi}{\partial \gamma_K}$, we rearrange
  \[
    \pi(\lambda) = \left( (1 + \lambda)^2 (\gamma_A - \lambda) + \lambda p(\lambda) \right) \gamma_K + \lambda p(\lambda),
  \]
  which implies (because $p(\lambda)$ is independent of $\gamma_K$) that
  \[
    \frac{\partial \pi}{\partial \gamma_K}(\lambda) = \frac{\pi(\lambda) - \lambda p(\lambda)}{\gamma_K}.
  \]
  From Figure \ref{sign}, we can deduce that
  \begin{align*}
    \frac{\partial \pi}{\partial \gamma_K}(\mu_a) &> 0, &
    \frac{\partial \pi}{\partial \gamma_K}(\mu_b) &> 0, &
    \frac{\partial \pi}{\partial \gamma_K}(\mu_c) &< 0,
  \end{align*}
  where only the last two partial derivatives have a sign opposing that of $\frac{\partial \pi}{\partial \lambda}$.
  Thus, Lemma~\ref{lem:extremal_roots} delivers that a local minimum of $\mu_a$ implies $\gamma_K = \gamma^K_{\max}$ and a local maximum of $\mu_a$ implies $\gamma_K = \gamma^K_{\min}$, while a local minimum of $\mu_b$ implies $\gamma_K = \gamma^K_{\min}$ and a local maximum of $\mu_c$ implies $\gamma_K = \gamma^K_{\max}$.

  For $\frac{\partial \pi}{\partial \gamma_R}$, we have that
  \[
    \frac{\partial \pi}{\partial \gamma_R}(\lambda) = (1 + \gamma_K) \lambda \frac{\partial p}{\partial \gamma_R}(\lambda), \quad \text{where } 
    \frac{\partial p}{\partial \gamma_R}(\lambda) = (2 - \gamma_A) \lambda - 1.
  \]
  Since $\mu_a < 0$, $2 - \gamma_A > 0$, and $\mu_b < \gamma_A$ we have that
  \begin{align*}
    \frac{\partial p}{\partial \gamma_R}(\mu_a) &< 0, &
    \frac{\partial p}{\partial \gamma_R}(\mu_b) &<
    \frac{\partial p}{\partial \gamma_R}(\gamma_A) = -(\gamma_A-1)^2 < 0,
    \intertext{implying}
    \frac{\partial \pi}{\partial \gamma_R}(\mu_a) &> 0, &
    \frac{\partial \pi}{\partial \gamma_R}(\mu_b) &< 0,
  \end{align*}
  both exhibiting the same signs as those of $\frac{\partial \pi}{\partial \lambda}$.
  Thus, we obtain that a local minimum of $\mu_a$ requires $\gamma_R = \gamma^R_{\max}$, a local maximum of $\mu_a$ requires $\gamma_R = \gamma^R_{\min}$, and a local minimum of $\mu_b$ requires $\gamma_R = \gamma^R_{\max}$ by Lemma~\ref{lem:extremal_roots}.

  For the third root $\mu_c > 0$ we distinguish three cases:
  First, if $\mu_c < \frac{1}{2-\gamma_A}$ then $\frac{\partial \pi}{\partial \gamma_R}(\mu_c)$ has the same (positive) sign as $\frac{\partial \pi}{\partial \lambda}(\mu_c)$ and Lemma~\ref{lem:extremal_roots} requires $\gamma_R = \gamma^R_{\min}$ for a local maximum of $\mu_c$.
  Second, if $\mu_c > \frac{1}{2-\gamma_A}$ then $\frac{\partial \pi}{\partial \gamma_R}(\mu_c)$ has the opposite sign as $\frac{\partial \pi}{\partial \lambda}(\mu_c)$ and Lemma~\ref{lem:extremal_roots} requires $\gamma_R = \gamma^R_{\max}$ for a local maximum of $\mu_c$.
  Third, if $\mu_c = \frac{1}{2-\gamma_A}$, the simple bound
  \[ \mu_c \le \frac{1}{2-\gamma_{\max}^A} \]
  follows. As this bound is not useful if $\gamma_{\max}^A$ is close to 2, a refinement is helpful: Exploiting the condition
  $\pi(\mu_c) = 0$, we obtain
  \[0 = \pi\left(\frac{1}{2-\gamma_A}\right) =-\frac{(\gamma_A-1)^2}{2-\gamma_A} \frac{(3 - \gamma_A)^2}{(2 - \gamma_A)^2}  \gamma_K
  + 
  \frac{(\gamma_A-1)^2}{(2 - \gamma_A)^3} (1 + \gamma_K), \]
  from which
  \begin{equation*}
    \gamma_K  = \frac{1}{(2-\gamma_A)(4-\gamma_A)} = \frac{\mu_c}{4-\gamma_A} = 
     \frac{\mu_c} {2 + \frac{1}{\mu_c}} =
     \frac{\mu_c^2} {2 \mu_c + 1}. 
  \end{equation*}
  Solving the corresponding quadratic equation for $\mu_c > 0$, we have
  \[ \mu_c = \gamma_K + \sqrt{\gamma_K^2 + \gamma_K} \le \gamma_{\max}^K + \sqrt{({\gamma_{\max}^K})^2 + \gamma_{\max}^K} ,
  \]
  and so we finally set the upper bound of $\mu_c$ for the third case to
  \[ \beta_c(\gamma_A, \gamma_K) = \min\left\{\frac{1}{2-\gamma_A}, \gamma_K + \sqrt{({\gamma_K})^2 + \gamma_K}\right\}.\]

  Summarizing, the partial derivatives of $\pi(\lambda)$ evaluated at the three roots of $\pi$ all exhibit a defined sign, except for $\frac{\partial \pi}{\partial \gamma_R} (\mu_c)$.
  The bounds derived above can be collected as in the statement of the lemma.
\end{proof}

			\begin{Corollary}
				\label{Cor:pi}
				        The eigenvalues of $\mathcal{P}^{-1} \mathcal{A}$ belong to
				\begin{equation}
					\label{bounds_0}
					\left[\mu_a(\gamma_{\min}^A,  \gamma_{\max}^R,  \gamma_{\max}^K),  \lambda_-(\gamma_{\max}^A, \gamma_{\min}^R) \right]
			\cup  I^{\pi}_+.
				\end{equation}
		\end{Corollary}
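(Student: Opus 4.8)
The plan is to combine \Cref{Theo:pi} with \Cref{Lem:optpi}, dispatch the edge case $\gamma_A = 1$ by a continuity argument (or, alternatively, via~\eqref{cbounds}), and then verify a handful of endpoint inequalities showing that the intervals $I_-$, $I^\pi_-$ and $I_+$ collapse into the two intervals in~\eqref{bounds_0}.

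By \Cref{Theo:pi}, every eigenvalue of $\mathcal{P}^{-1}\mathcal{A}$ lies in $I_- \cup I_+$ or is a root of $\pi(\,\cdot\,;\gamma_A,\gamma_R,\gamma_K)$ for some admissible triple with $\gamma_A \in I_A$, $\gamma_R \in I_R$, $\gamma_K \in I_K$. If $\gamma_A \neq 1$, \Cref{Lem:optpi} confines such roots to $I^\pi_- \cup I^\pi_+$. If $\gamma_A = 1$, the roots are $\lambda = 1$ together with $c_\pm$; since $\gamma_{\min}^A < 1 < \gamma_{\max}^A$, one may let $\gamma_A \to 1$ from within $(\gamma_{\min}^A,\gamma_{\max}^A)\setminus\{1\}$ with $\gamma_R,\gamma_K$ fixed, and because $\pi$ is a monic cubic in $\lambda$ its roots depend continuously on $\gamma_A$; as $\mu_a$ stays negative while $\mu_b,\mu_c$ stay positive, their limits are $c_-$ and (in some order) $1$ and $c_+$, which therefore inherit the closed bounds $I^\pi_-$ and $I^\pi_+$. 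Hence every eigenvalue lies in $I_- \cup I_+ \cup I^\pi_- \cup I^\pi_+$.

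It remains to prove $I_- \cup I^\pi_- \subseteq J_-$, where $J_- = [\mu_a(\gamma_{\min}^A,\gamma_{\max}^R,\gamma_{\max}^K),\lambda_-(\gamma_{\max}^A,\gamma_{\min}^R)]$ is the negative interval in~\eqref{bounds_0}, and $I_+ \subseteq I^\pi_+$. For the negative part, the key observation is $\mu_a(\gamma_A,\gamma_R,\gamma_K) \le \lambda_-(\gamma_A,\gamma_R)$ for every admissible triple with $\gamma_A \neq 1$: by~\eqref{piconditions}, $\pi(\lambda_-) \ge 0$ and $\pi(0) > 0$ while $\pi(\lambda)\to-\infty$ as $\lambda\to-\infty$, so $\pi$ has a root in $(-\infty,\lambda_-]$, and this root can only be the unique negative root $\mu_a$. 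Applying this at $(\gamma_{\min}^A,\gamma_{\max}^R,\gamma_{\max}^K)$ and at $(\gamma_{\max}^A,\gamma_{\min}^R,\gamma_{\min}^K)$, and recalling from \Cref{Lem:optp} that $I_-$ is a genuine interval, shows that the left endpoint of $I_-$ and the right endpoint of $I^\pi_-$ both lie in $J_-$, so $I_- \cup I^\pi_- \subseteq J_-$. For the positive part, I use~\eqref{piconditions} again: $\pi(0) > 0 > \pi(\gamma_A)$ forces the first positive root below $\gamma_A$, i.e.\ $\mu_b(\gamma_A,\gamma_R,\gamma_K) < \gamma_A$, whence $\mu_b(\gamma_{\min}^A,\gamma_{\max}^R,\gamma_{\min}^K) < \gamma_{\min}^A$, so $I_+$ starts no earlier than $I^\pi_+$; and $\pi(\lambda_+) < 0$ with $\lambda_+ > 0$ places $\lambda_+$ in the sign interval $(\mu_b,\mu_c)$, giving $\lambda_+(\gamma_A,\gamma_R) < \mu_c(\gamma_A,\gamma_R,\gamma_K)$, hence $\lambda_+(\gamma_{\max}^A,\gamma_{\max}^R) < \mu_c(\gamma_{\max}^A,\gamma_{\max}^R,\gamma_{\max}^K)$, one of the terms in the maximum defining the right endpoint of $I^\pi_+$, so $I_+$ ends no later. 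Thus $I_+ \subseteq I^\pi_+$, and combining everything yields~\eqref{bounds_0}.

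The main obstacle is the bookkeeping among the many interval endpoints, and in particular confirming that $I_+$ — whose endpoints involve $\lambda_\pm$ and do not depend on $\gamma_K$ — is absorbed into $I^\pi_+$, whose right endpoint involves $\mu_c$ and does depend on $\gamma_K$; the crux there is locating the strict inequality $\lambda_+(\gamma_A,\gamma_R) < \mu_c(\gamma_A,\gamma_R,\gamma_K)$ among the three candidate values in the maximum. The $\gamma_A = 1$ continuity step also needs a little care, since one must track which limiting root each of $\mu_a,\mu_b,\mu_c$ approaches; the sign information $\mu_a < 0 < \mu_b < \mu_c$, stable under the limit, is what makes the matching unambiguous up to the order of $1$ and $c_+$.
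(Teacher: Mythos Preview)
Your proof is correct. The endpoint containments $I_- \cup I^\pi_- \subseteq J_-$ and $I_+ \subseteq I^\pi_+$ are handled exactly as they should be, via the sign information in~\eqref{piconditions}; in fact you spell these out more carefully than the paper does, which simply asserts that ``the statement then follows from \Cref{Theo:pi} and \Cref{Lem:optpi}'' after treating the $\gamma_A = 1$ case.

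The one genuine methodological difference is how the $\gamma_A = 1$ case is handled. You use continuity: the roots of the monic cubic $\pi(\,\cdot\,;\gamma_A,\gamma_R,\gamma_K)$ depend continuously on $\gamma_A$, the intervals $I^\pi_\pm$ are closed and independent of $\gamma_A$, and $\gamma_{\min}^A < 1 < \gamma_{\max}^A$, so the limiting roots $c_-,\,1,\,c_+$ inherit the bounds. The paper instead derives the algebraic identity
\[
\pi(\lambda) = (\lambda - \gamma_A)\,c(\lambda) + (1+\gamma_K)\lambda\gamma_R(1-\gamma_A)(\lambda-1)
\]
and evaluates $\pi$ at each endpoint of the intervals in~\eqref{cbounds} to compare signs with $\pi(\mu_a)$, $\pi(\mu_b)$, $\pi(\mu_c)$. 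Your continuity argument is shorter and avoids the case-by-case sign checks; the paper's identity is more explicit and reappears (in an $E$-perturbed form) in the proof of \Cref{Cor:pi_E}, so it has some reuse value later. Either route closes the gap.
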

\begin{proof}
	We first show that the intervals defined for the case $\gamma_A = 1$ in \eqref{cbounds} are contained in $I^{\pi}_- \cup  I^{\pi}_+$. 
		We start from an equivalent expression for the polynomial $p(\lambda)$, which is checked by direct computation:
		\begin{equation} \label{p_alternative} p(\lambda)  = (\lambda + \gamma_R) (\lambda - \gamma_A) + \gamma_R (1 - \gamma_A) (\lambda-1).\end{equation}
		Then,
    \begin{align}
        \pi(\lambda) &= (1 + \lambda)^2 (\gamma_A - \lambda) \gamma_K + \lambda (1 + \gamma_K) p(\lambda) \nonumber \\
				&= (\lambda -\gamma_A) \left(\lambda (1 + \gamma_K)(\lambda + \gamma_R) - \gamma_K (1 + \lambda)^2   \right) +  (1 + \gamma_K) \lambda \gamma_R(1 - \gamma_A) (\lambda-1) \nonumber \\
				&= (\lambda -\gamma_A) \left(\lambda^2 + \lambda (\gamma_R(\gamma_K+1) -2 \gamma_K) - \gamma_K\right) +  (1 + \gamma_K) \lambda \gamma_R(1 - \gamma_A) (\lambda-1) \nonumber \\
				&= (\lambda -\gamma_A)  c(\lambda)  +  (1 + \gamma_K) \lambda \gamma_R(1 - \gamma_A) (\lambda-1), \label{pivsc} 
    \end{align}
                from which we can connect the roots of $c$ with the roots of $\pi$.
         Denoting as $c_-^{\min}$, $c_-^{\max}$, $c_+^{\min}$, $c_+^{\max}$ the endpoints of the intervals in \eqref{cbounds}, and recalling
		that $c_+^{\min} \le 1 \le c_+^{\max}$, we have that
  \begin{align*}
    \pi(c_-^{\min}; \gamma_{\min}^A, \gamma_{\max}^R, \gamma_{\max}^K)   &\ge 0 = \pi(\mu_a; \gamma_{\min}^A, \gamma_{\max}^R, \gamma_{\max}^K), \\
    \pi(c_-^{\max}; \gamma_{\max}^A, \gamma_{\min}^R, \gamma_{\min}^K)   &\le 0 = \pi(\mu_a; \gamma_{\max}^A, \gamma_{\min}^R, \gamma_{\min}^K),
    \intertext{showing that $\mu_a^{\min} \le c_-^{\min} < c_-^{\max} \le \mu_a^{\max}$. Furthermore,}
    \pi(c_+^{\min}; \gamma_{\min}^A, \gamma_{\max}^R, \gamma_{\min}^K)   &\le 0 = \pi(\mu_b; \gamma_{\min}^A, \gamma_{\max}^R, \gamma_{\min}^K), \\
    \pi(c_+^{\max}; \gamma_{\max}^A, \gamma_{\min}^R, \gamma_{\max}^K)   &\le 0 = \pi(\mu_c; \gamma_{\max}^A, \gamma_{\min}^R, \gamma_{\max}^K),
  \end{align*}
  showing that $\mu_b^{\min} \le c_+^{\min} \le c_+^{\max} \le \mu_c^{\max}$.

	The statement then follows from \Cref{Theo:pi} and \Cref{Lem:optpi}.
\end{proof}

	\begin{figure}[h!]
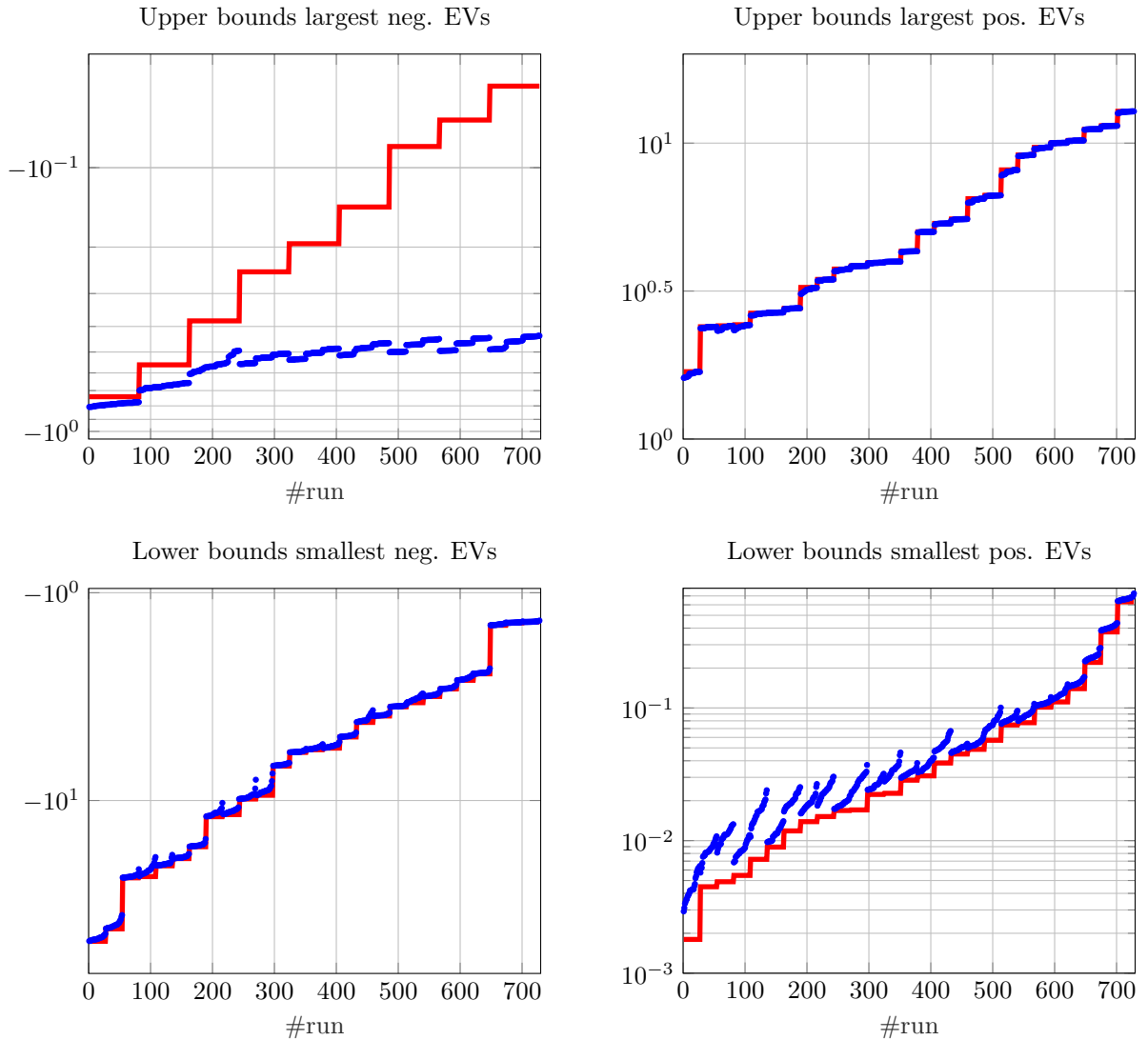

    \hspace*{-12mm}

%
%
		\caption{Extremal eigenvalues of the preconditioned matrix (blue dots) and bounds obtained from \eqref{bounds_0} (red line) after 25 runs with each combination of the parameters from \Cref{TabPar}.}
		\label{eigvsbounds}
	\end{figure}

	In \Cref{eigvsbounds} we depict the extremal eigenvalues of $\mathcal{P}^{-1} \mathcal{A}$, as compared to the developed bounds.  Further, we run $3^6 = 729$ different
	synthetic test cases combining the  values of the extremal eigenvalues of the SPD matrices involved in the previous discussion,
	reported in \Cref{TabPar}. Each test case has been run 25 times, generating random matrices which satisfy the relevant spectral properties. {In more detail, the dimensions $n$, $m$, and $p$ are computed using \texttt{60+10*rand}, using {\scshape Matlab}'s \texttt{rand} function, re-computing as necessary to ensure that $n \geq m \geq p$. The matrices $A$, $B$, and $C$ are computed using {\scshape Matlab}'s \texttt{randn} function, whereupon we take the symmetric part of $A$ and then add 1.01 times an identity matrix multiplied by the absolute value of the smallest eigenvalue, to ensure symmetric positive definiteness. We then choose $\widehat{A}$ as a linear combination of $A$ and the identity matrix, such that the eigenvalues of $\widehat{A}^{-1}A$ are contained in $[\gamma_{\min}^A,\gamma_{\max}^A]$, and similarly to construct $\widehat{S}$ and $\widehat{X}$.} In \Cref{eigvsbounds} we sort the extremal eigenvalues (and the computed bounds accordingly) for improved readability.
{We notice 
that the plots indicate (for these problems) that three bounds out of four capture
the behaviour of the eigenvalues very well, while only the upper bounds
on the negative eigenvalues are not as tight. These will be improved in Section 4 with an additional hypothesis on the sizes
of the matrices involved.}

	\begin{table}[h!]
	\begin{center}
	\begin{tabular}{|c||rrl|}
		\hline
		$	\gamma_{\min}^A$ & 0.1 & 0.3 & 0.9 \\
		$	\gamma_{\max}^A$ & 1.2 & 1.5 & 1.99\\
		$	\gamma_{\min}^R$ & 0.1 & 0.3 & 0.9 \\
		$	\gamma_{\max}^R$ & 1.2 & 1.8 & 5 \\
		$	\gamma_{\min}^K$ & 0.1 & 0.3 & 0.9 \\
		$	\gamma_{\max}^K$ & 1.2 & 1.8 & 5 \\
		\hline
		\end{tabular}
	\end{center}
		\caption{Extremal eigenvalues of $A_{\text{prec}}$, $S_{\text{prec}}$, and $X_{\text{prec}}$ used in the verification of the bounds.}
		\label{TabPar}
	\end{table}
	
\section{Eigenvalue bounds with $E \ne 0$} \label{sec:E_not_zero}
We now handle the case in which the $(3,3)$ block $E$ is nonzero.
In this case, \eqref{noE} becomes
\[	\left((1+ \lambda)^2 K Y(\lambda)^{-1} K^\top - \bar E + \lambda(I + K K^\top) \right) z =0.  \]
Proceeding then as in the proof of \Cref{Theo:pi}, we obtain that the eigenvalues of the preconditioned
matrix are the roots of the cubic polynomial
\begin{align}
    \pi_E(\lambda; \gamma_A, \gamma_R, \gamma_K, \gamma_E) &= (1+ \lambda)^2 (\gamma_A - \lambda) \gamma_K + p(\lambda) \lambda (1+\gamma_K)  - \gamma_E p(\lambda) \nonumber \\
    &= (1+ \lambda)^2 (\gamma_A - \lambda) \gamma_K + (\lambda (1+\gamma_K) -\gamma_E) p(\lambda) \nonumber \\
    &= \pi(\lambda) - \gamma_E p(\lambda). \label{piEvspi}
\end{align}
As in the case $E \equiv 0$, we analyze separately the case $\gamma_A = 1$, in which   $\lambda=1$ is a root of $\pi_E(\lambda)$.
  To this end, we write
  \begin{align}
      \pi_E(\lambda; 1, \gamma_R, \gamma_K, \gamma_E) &= \pi(\lambda; 1, \gamma_R, \gamma_K, \gamma_E) -
      \gamma_E p(\lambda; 1, \gamma_R) \nonumber \\
      &=        (\lambda - 1) \left(\lambda^2 + \lambda (\gamma_R(\gamma_K+1) -2 \gamma_K) - \gamma_K\right)
      -\gamma_E \left(\lambda^2 - (1 -\gamma_R) \lambda - \gamma_R\right) \nonumber \\
      &=        (\lambda - 1) \left(\lambda^2 + \lambda (\gamma_R(\gamma_K+1) -2 \gamma_K) - \gamma_K\right)
      -\gamma_E (\lambda-1) (\lambda + \gamma_R) \nonumber \\
      &=        (\lambda - 1) \underbrace{\left(\lambda^2 + \lambda (\gamma_R(\gamma_K+1) -2 \gamma_K - \gamma_E\right) - \gamma_K -\gamma_E \gamma_R)}_{\text{\normalsize{$c^E(\lambda)$}}}. \label{cElambda}
  \end{align}
The other two roots $c_-^E$ and $c_+^E$ of $\pi_E(\lambda; 1, \gamma_R, \gamma_K, \gamma_E)$ hence solve
$c^E(\lambda) = 0$. 

	By a similar argument as the one used for $c(\lambda)$  we can show that the smallest and largest values of the positive root of
	$c^E(\lambda)$ are separated by 1.
  \begin{figure}[bt]
    \begin{tikzpicture}
      \begin{axis}[xmin=-0.7, xmax=2.3, ymin=-1.5, ymax=2, grid=major,
          legend style={at={(0.5,0.97)}, anchor=north}, xlabel={$\lambda$}]
        \newcommand\GAMMAA{1.639}
        \newcommand\GAMMAR{0.734}
        \newcommand\GAMMAK{0.251}
        \newcommand\GAMMAE{0.412}
        \newcommand\CA{2*\GAMMAK + \GAMMAA + \GAMMAR*(1+\GAMMAK)*(\GAMMAA-2)}
        \newcommand\CB{\GAMMAR*(1+\GAMMAK) + \GAMMAK*(1-2*\GAMMAA)}
        \newcommand\CC{\GAMMAA*\GAMMAK}
        \addplot[domain=-0.7:2.3, samples=100, color2, very thick]
        {((x - (\CA)) * x - (\CB)) * x + (\CC)};
        \addlegendentry{$\pi(\lambda)$}
        \addplot[domain=-0.7:2.3, samples=100, color1, very thick]
        {-(\GAMMAE) * (x^2 - ((\GAMMAA)*(1+\GAMMAR)-2*(\GAMMAR))*x - \GAMMAR)};
        \addlegendentry{$-\gamma_E p(\lambda)$}
        \addplot[domain=-0.7:2.3, samples=100, color3, very thick]
        {((x - (\CA)) * x - (\CB)) * x + (\CC) - (\GAMMAE) * (x^2 - ((\GAMMAA)*(1+\GAMMAR)-2*(\GAMMAR))*x - \GAMMAR)};
        \addlegendentry{$\pi_E(\lambda)$}
      \end{axis}
    \end{tikzpicture}
      \caption{Polynomials $\pi(\lambda)$, $-\gamma_E\, p(\lambda)$, and $\pi_E(\lambda)$ with the same values of $\gamma_A, \gamma_R$, and $\gamma_K$ as in Figure 2,
    and $\gamma_E = 0.512$.}
    \label{FigWithE}
  \end{figure}
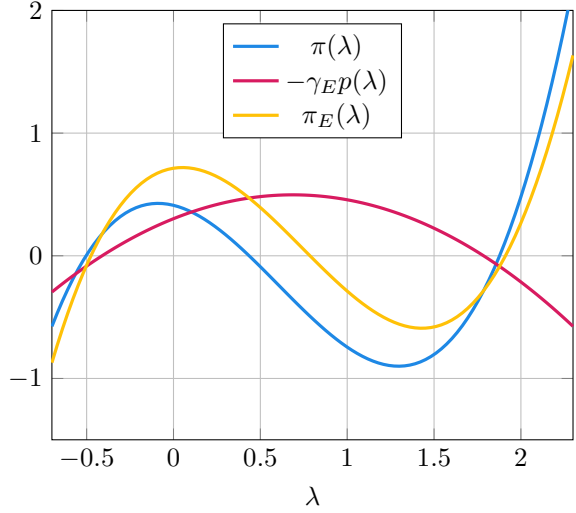
    
	Denote by
  \[
    \mu_a^E(\gamma_A, \gamma_R, \gamma_K, \gamma_E) < 0 < \mu_b^E(\gamma_A, \gamma_R, \gamma_K, \gamma_E) < \mu_c^E(\gamma_A, \gamma_R, \gamma_K, \gamma_E)
  \]
  the roots of $\pi_E(\lambda)$. 
	We are in the following situation (compare \eqref{piconditions} and \Cref{newplot}):
	\begin{equation}
		\label{Esketch}
		\begin{array}{lclclc}
			\pi_E(\mu_a) = - \gamma_E\, p(\mu_a)  \le 0, && \pi'(\mu_a) > 0 & \quad\Rightarrow\quad && \mu_a \le \mu_a^E, \\[.2em]
			\pi_E(\lambda_-) = \pi(\lambda_-) \ge  0, &&  & \quad\Rightarrow\quad && \lambda_- \ge \mu_a^E, \\[.2em]
			\pi_E(\mu_b) = - \gamma_E\, p(\mu_b)  \ge 0, && \pi'(\mu_b) < 0 & \quad\Rightarrow\quad && \mu_b \le \mu_b^E, \\[.2em]
						\pi_E(\mu_c) = - \gamma_E \,p(\mu_c)  \le 0, && \pi'(\mu_c) > 0 & \quad\Rightarrow\quad && \mu_c \le \mu_c^E ,
	\end{array}
	\end{equation}
	which shows (see also \Cref{FigWithE}) that $\mu_a^E \in [\mu_a, \lambda_-]$ and $\mu_b^E \ge \mu_b$. 
	Furthermore, it also holds that $\sgn(\pi_E'(\mu_*^E))  = \sgn(\pi'(\mu_*)) $.
It only remains to consider the upper bound for $\mu_c$. However, experimental results show that the lower bound
for $\mu_b$  may be a loose lower bound for $\mu_b^E$, so it is also of value to refine this bound. The next theorem finds two tight bounds
for the two previous quantities.
\begin{Theorem}
	\label{theo:E0}
	If $E \ne 0$ and $\gamma_A \neq 1$, the eigenvalues of $\mathcal{P}^{-1} \mathcal{A}$
        	belong to
		{
			\begin {equation}
			\label{intervalsE} \left[\mu_a(\gamma_{\min}^A,  \gamma_{\max}^R,  \gamma_{\max}^K ),  
				\lambda_-(\gamma_{\max}^A, \gamma_{\min}^R) \right]
                        \cup  I^{\pi_E}_+,
\end{equation}
where $I^{\pi_E}_+  = [\mu_l^+, \mu_u^+]$ and
\begin{align*}
	\mu_l^+ &= \min\{\gamma_{\min}^X, \mu_b^E(\gamma_{\min}^A,  \gamma_{\max}^R,  \gamma_{\min}^X, 0)  \},  \\
		 \mu_u^+ & = \max \left \{\mu_c^E(\gamma_{\max}^A,  \gamma_{\min}^R,  \gamma_{\max}^K, \gamma_{\max}^E),
		 \mu_c^E(\gamma_{\max}^A,  \gamma_{\max}^R,  \gamma_{\max}^K, \gamma_{\max}^E), \beta_c^E(\gamma^E_{\max}, \gamma^K_{\max})\right\},
\end{align*}
}
with
   \begin{equation}
     \label{betac}
     \beta_c^E(\gamma_E, \gamma_K) = 
     \gamma_K +\frac{\gamma_E}{2} + 
     \sqrt{\left(\gamma_K + \frac{\gamma_E}{2}\right)^2 + \gamma_K}.
   \end{equation}
\end{Theorem}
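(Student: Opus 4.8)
\emph{Proof strategy.} The plan is to replay the chain of arguments of \Cref{Theo:pi}, \Cref{Lem:optpi} and \Cref{Cor:pi}, now with the cubic $\pi_E=\pi-\gamma_E p$ of \eqref{piEvspi} and the additional indicator $\gamma_E\ge 0$, bearing in mind the coupling $\gamma_K+\gamma_E=\gamma_X\in I_X$ (so that $\gamma_K,\gamma_E\ge 0$ vary on a segment rather than in a box). As in \Cref{Theo:pi}, an eigenvalue $\lambda$ either lies in $[\gamma^A_{\min},\gamma^A_{\max}]$ --- and is then already inside the claimed set, since $[\gamma^A_{\min},\gamma^A_{\max}]\subseteq I_+\subseteq I^{\pi_E}_+$, using $\mu_b(\gamma^A_{\min},\gamma^R_{\max},\gamma^X_{\min})<\gamma^A_{\min}$ and $\mu_c>\lambda_+$ --- or it is a root of $\pi_E(\cdot;\gamma_A,\gamma_R,\gamma_K,\gamma_E)$. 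The case $\gamma_A=1$ I would dispatch exactly as in \Cref{Cor:pi}: from \eqref{cElambda} and the identity $\pi_E(\lambda)=(\lambda-\gamma_A)\,c^E(\lambda)+\gamma_R(1-\gamma_A)(\lambda-1)\bigl((1+\gamma_K)\lambda-\gamma_E\bigr)$ (which follows from \eqref{p_alternative} and \eqref{pivsc} and reduces to \eqref{cElambda} at $\gamma_A=1$), comparing the sign of $\pi_E$ at the endpoints of the $c^E$-root intervals with its vanishing at $\mu_a^E,\mu_b^E,\mu_c^E$ for the matching corner indicators shows that $c^E_-$ lies in the first interval of \eqref{intervalsE} and, using that the positive root of $c^E$ straddles $1$, that $c^E_+$ and $\lambda=1$ lie in $I^{\pi_E}_+$. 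It then remains to bound $\mu_a^E$, $\mu_b^E$, $\mu_c^E$ for $\gamma_A\neq 1$.

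The first interval of \eqref{intervalsE} requires nothing beyond \eqref{Esketch}: there $\mu_a^E\in[\mu_a,\lambda_-]$, so $\mu_a^E\le\lambda_-(\gamma^A_{\max},\gamma^R_{\min})$ by \Cref{Lem:optp} and $\mu_a^E\ge\mu_a\ge\mu_a(\gamma^A_{\min},\gamma^R_{\max},\gamma^K_{\max})$ by \Cref{Lem:optpi}.

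For the lower bound $\mu_l^+$ of $\mu_b^E$ I would argue directly, without \Cref{lem:extremal_roots}. Reparametrizing $\gamma_K=\gamma_X-\gamma_E$, one has $\pi_E(\lambda)=\pi(\lambda;\gamma_A,\gamma_R,\gamma_X)-\gamma_E(1+\lambda)\bigl(p(\lambda)+(1+\lambda)(\gamma_A-\lambda)\bigr)$, so at $\gamma_E=0$ the middle root is $\mu_b(\gamma_A,\gamma_R,\gamma_X)$. On the face $\gamma_K=0$ this degenerates to $\pi_E=(\lambda-\gamma_X)p(\lambda)$, whose middle positive root $\min\{\gamma_X,\lambda_+\}\ge\min\{\gamma^X_{\min},\gamma^A_{\min}\}\ge\mu_l^+$ (using $\lambda_+\ge\gamma_A$ from the proof of \Cref{Cor:p}). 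For $\gamma_K>0$, writing $\mu_b=\mu_b(\gamma_A,\gamma_R,\gamma_X)\in(0,\gamma_A)$ and using $\pi(\mu_b;\gamma_A,\gamma_R,\gamma_X)=0$, a short computation gives $p(\mu_b)+(1+\mu_b)(\gamma_A-\mu_b)=\frac{p(\mu_b)(\gamma_X-\mu_b)}{(1+\mu_b)\gamma_X}$ and hence $\pi_E(\mu_b;\gamma_A,\gamma_R,\gamma_X,\gamma_E)=-\frac{\gamma_E\,p(\mu_b)(\gamma_X-\mu_b)}{\gamma_X}$; since $0<\mu_b<\gamma_A\le\lambda_+$ forces $p(\mu_b)<0$, this is $\ge 0$ when $\gamma_X\ge\mu_b$, forcing $\mu_b^E\ge\mu_b$ (because $\mu_b<\mu_c^E$), whereas when $\gamma_X<\mu_b$ the analogous evaluation $\pi_E(\gamma_X;\gamma_A,\gamma_R,\gamma_X,\gamma_E)=(1+\gamma_X)\gamma_K\bigl(p(\gamma_X)+(1+\gamma_X)(\gamma_A-\gamma_X)\bigr)>0$ --- positive because $\gamma_X\in(\mu_a,\mu_b)$ makes $\pi(\gamma_X;\gamma_A,\gamma_R,\gamma_X)>0$ --- forces $\mu_b^E\ge\gamma_X$. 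Thus $\mu_b^E\ge\min\{\mu_b(\gamma_A,\gamma_R,\gamma_X),\gamma_X\}$; monotonicity of $\mu_b$ in its third argument plus \Cref{Lem:optpi} gives $\mu_b(\gamma_A,\gamma_R,\gamma_X)\ge\mu_b(\gamma^A_{\min},\gamma^R_{\max},\gamma^X_{\min})$ and $\gamma_X\ge\gamma^X_{\min}$, so $\mu_b^E\ge\mu_l^+$.

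The upper bound $\mu_u^+$ of $\mu_c^E$ is the step I expect to be the main obstacle, and I would split it by a dichotomy on $\mu_c^E$ versus $\gamma_E$. If $\mu_c^E<\gamma_E$, then since the positive root $\beta_c^E(\gamma_E,\gamma_K)$ of $t^2-(2\gamma_K+\gamma_E)t-\gamma_K$ exceeds $\gamma_E$ and $\beta_c^E$ is increasing in both arguments, $\mu_c^E<\gamma_E\le\gamma^E_{\max}<\beta_c^E(\gamma^E_{\max},\gamma^K_{\max})\le\mu_u^+$. If instead $\mu_c^E\ge\gamma_E$, then (for $\gamma_K>0$) the sign pattern of \Cref{sign} holds at $\mu_c^E$, and using $\pi_E(\mu_c^E)=0$ to remove the leading term one gets $\frac{\partial\pi_E}{\partial\gamma_E}(\mu_c^E)=-p(\mu_c^E)<0$, $\frac{\partial\pi_E}{\partial\gamma_A}(\mu_c^E)\le 0$, $\frac{\partial\pi_E}{\partial\gamma_K}(\mu_c^E)=\frac{p(\mu_c^E)(\gamma_E-\mu_c^E)}{\gamma_K}\le 0$ --- all opposite in sign to $\frac{\partial\pi_E}{\partial\lambda}(\mu_c^E)>0$ --- while $\frac{\partial\pi_E}{\partial\gamma_R}(\mu_c^E)=\bigl((1+\gamma_K)\mu_c^E-\gamma_E\bigr)\bigl((2-\gamma_A)\mu_c^E-1\bigr)$ has the indefinite factor $(2-\gamma_A)\mu_c^E-1$. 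By \Cref{lem:extremal_roots} a local maximum then forces $\gamma_A=\gamma^A_{\max}$, $\gamma_K=\gamma^K_{\max}$, $\gamma_E=\gamma^E_{\max}$ and --- splitting on the sign of $(2-\gamma_A)\mu_c^E-1$, where $\gamma^A_{\max}<2$ enters --- either $\gamma_R=\gamma^R_{\min}$ if $\mu_c^E>\frac1{2-\gamma_A}$, or $\gamma_R=\gamma^R_{\max}$ if $\mu_c^E<\frac1{2-\gamma_A}$, or, if $\mu_c^E=\frac1{2-\gamma_A}$, substituting $2-\gamma_A=1/\mu_c^E$ into $\pi_E(\tfrac1{2-\gamma_A})=0$ --- with $p(\tfrac1{2-\gamma_A})=\bigl(\tfrac{\gamma_A-1}{\gamma_A-2}\bigr)^2$ from \eqref{partialR} and the value of $\pi(\tfrac1{2-\gamma_A})$ from the proof of \Cref{Lem:optpi} --- gives $(\mu_c^E)^2-(2\gamma_K+\gamma_E)\mu_c^E-\gamma_K=0$, i.e.\ $\mu_c^E=\beta_c^E(\gamma_E,\gamma_K)\le\beta_c^E(\gamma^E_{\max},\gamma^K_{\max})$. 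The maximum of the three candidate values is $\mu_u^+$. The points that genuinely need care are the signs of $\frac{\partial\pi_E}{\partial\gamma_A}(\mu_c^E)$ and $\frac{\partial\pi_E}{\partial\gamma_K}(\mu_c^E)$ (both relying on $\pi_E(\mu_c^E)=0$ and on $(1+\gamma_K)\mu_c^E>\gamma_E$, which is itself a consequence of $\pi_E(\mu_c^E)=0$ together with $\gamma_A<\mu_c^E$ and $p(\mu_c^E)>0$) and the bookkeeping of the segment constraint on $(\gamma_K,\gamma_E)$ in \Cref{lem:extremal_roots}; the $\beta_c^E$ algebra and the transfer of the sign table are routine.
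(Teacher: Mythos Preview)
Your argument is essentially correct and matches the paper for the negative interval (via \eqref{Esketch}) and for the upper bound $\mu_u^+$ (dichotomy on $\mu_c^E\gtrless\gamma_E$, then \Cref{lem:extremal_roots} with the three-case split on $(2-\gamma_A)\mu_c^E-1$ yielding the $\beta_c^E$ quadratic). One small point: in your claim ``because $\mu_b<\mu_c^E$'' you should note that $\mu_b(\gamma_A,\gamma_R,\gamma_X)<\gamma_A$ by \eqref{piconditions}, while $\mu_c^E\ge\mu_c(\gamma_A,\gamma_R,\gamma_K)>\gamma_A$ by \eqref{Esketch}; this closes the gap without any extra hypothesis.

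The genuinely different part is your treatment of $\mu_l^+$. The paper reparametrizes $\gamma_K=\gamma_X-\gamma_E$, introduces $w(\lambda;\gamma_A,\gamma_R,\gamma_X,\gamma_E)=\pi_E(\lambda;\gamma_A,\gamma_R,\gamma_X-\gamma_E,\gamma_E)$, computes all four partial derivatives of $w$ at $\mu_b^E$, and applies \Cref{lem:extremal_roots} systematically (establishing $\tfrac{\gamma_E}{1+\gamma_K}<\mu_b^E<\gamma_A$ along the way) to force $\gamma_A=\gamma_{\min}^A$, $\gamma_R=\gamma_{\max}^R$, $\gamma_E=0$, $\gamma_X=\gamma_{\min}^X$ at a local minimum, with the side case $\mu_b^E\ge\gamma_X$. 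Your approach instead evaluates $\pi_E$ directly at the two comparison points $\mu_b(\gamma_A,\gamma_R,\gamma_X)$ and $\gamma_X$, using the clean identity $\pi_E(\mu_b)=-\gamma_E\,p(\mu_b)(\gamma_X-\mu_b)/\gamma_X$ to get $\mu_b^E\ge\min\{\mu_b(\gamma_A,\gamma_R,\gamma_X),\gamma_X\}$, then invokes the monotonicity of $\mu_b$ in its three arguments already established in \Cref{Lem:optpi}. This is shorter and avoids recomputing derivatives of $w$; the paper's route is more uniform with the rest of the analysis and makes the role of the total derivative $\tfrac{\mathrm d w}{\mathrm d\gamma_E}$ explicit. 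Both arrive at the same bound $\mu_l^+=\min\{\gamma_{\min}^X,\mu_b(\gamma_{\min}^A,\gamma_{\max}^R,\gamma_{\min}^X)\}$.
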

\begin{proof}
	\noindent \underline{Step 1:} (Bounding $\mu_c^E$ from above.) As in Figure \ref{FigWithE}, we have that $\frac{\partial \pi_E}{\partial \lambda}(\mu_c^E) > 0$. {We use \eqref{newplambda} to rearrange}
  \begin{align*}
    \pi_E(\lambda) &= \pi(\lambda) - \gamma_E p(\lambda) 
    = (1+\lambda)^2 (\gamma_A - \lambda) \gamma_K + p(\lambda) (\lambda (1 + \gamma_K) - \gamma_E) \\
    &= (\gamma_A - \lambda)\left[ (1+\lambda)^2 \gamma_K - \lambda (\gamma_R + 1) (\lambda (\gamma_K + 1) - \gamma_E) \right] - \gamma_R (\lambda - 1)^2 (\lambda (1 + \gamma_K) - \gamma_E),
  \end{align*}
  which, by affine linearity in $\gamma_A$, implies
  \begin{align*}
	  \frac{\partial \pi_E}{\partial \gamma_A}(\lambda) &= \frac{{\pi_E}(\lambda) + \gamma_R (\lambda - 1)^2 \left((1+\gamma_K) \lambda -\gamma_E\right)}{\gamma_A - \lambda}.
    \intertext{The remaining partial derivatives of $\pi_E$ are easily obtained from those of $\pi$ as}
    \frac{\partial \pi_E}{\partial \gamma_R}(\lambda) &= \left( (1 + \gamma_K)\lambda - \gamma_E \right) \left( (2-\gamma_A) \lambda - 1 \right), \\
		\frac{\partial \pi_E}{\partial \gamma_K}(\lambda) &= \frac{\partial \pi}{\partial \gamma_K}(\lambda) = \frac{\pi(\lambda) - \lambda p(\lambda)}{\gamma_K} =
		\frac{\pi_E(\lambda) + (\gamma_E-\lambda) p(\lambda)}{\gamma_K},\\
    \frac{\partial \pi_E}{\partial \gamma_E}(\lambda) &= -p(\lambda).
  \end{align*}
  We now show that $(\gamma_K + 1) \mu^E_c - \gamma_E >0$. In fact 
  from the sketch \eqref{Esketch} we have that $\mu_c^E \ge  \mu_c > \gamma_A$, 
  meaning that if 
  $\gamma_A \ge \frac{\gamma_E}{1+\gamma_K}$ then it also holds that 
  $\mu_c^E > \frac{\gamma_E}{1+\gamma_K}$. If instead
  $\gamma_A < \frac{\gamma_E}{1+\gamma_K}$, then since
  \[\pi_E\left(\frac{\gamma_E}{1+\gamma_K}\right) = 
  \left(1 + \frac{\gamma_E}{1+\gamma_K}\right)^2 \left(\gamma_A - \frac{\gamma_E}{1+\gamma_K} \right) \gamma_K < 0,\]
  it must again hold that $\frac{\gamma_E}{1+\gamma_K} < \mu_c^E$.

  Hence, we obtain that
  \begin{align*}
    \frac{\partial \pi_E}{\partial \gamma_A}(\mu^E_c) &< 0, &
    \frac{\partial \pi_E}{\partial \gamma_E}(\mu^E_c) &< 0.
  \end{align*}
  Lemma~\ref{lem:extremal_roots} then delivers that $\gamma_A = \gamma^A_{\max}$ and $\gamma_E = \gamma^E_{\max}$ if $\mu_c$ is a local maximum.
  
  The partial derivative $\frac{\partial \pi_E}{\partial \gamma_R}(\mu_c^E)$ 
  has the same sign as
  $(2-\gamma_A) \mu_c^E - 1$.
  Let us consider the case $ \mu_c^E = \frac{1}{2 - \gamma_A}$, and write
  \[ 0 = \pi_E\left(\frac{1}{2 - \gamma_A}\right)  = \frac{(\gamma_A-1)^2}{(2-\gamma_A)^2}\left(-\frac{(3 - \gamma_A)^2}{2-\gamma_A} \gamma_K + \frac{1 + \gamma_K}{2 - \gamma_A} - \gamma_E\right).\]
  Observing that $3 - \gamma_A = 1 + \frac {1} {\mu_c^E}$, we rewrite the previous identity as
  \[ -\gamma_K \left(1 + \frac {1}{\mu_c^E} \right)^2 \mu_c^E + (1 + \gamma_K) \mu_c^E - \gamma_E =0, \]
  and finally as
  \[ (\mu_c^E)^2 - (2\gamma_K + \gamma_E) \mu_c^E - \gamma_K = 0.\]
  Therefore, in the case $\frac{\partial \pi_E}{\partial \gamma_R}(\mu_c) = 0$, we can bound $\mu_c^E \le \beta^E_c(\gamma^E_{\max}, \gamma^K_{\max})$. In the other two cases, we can use the worse case of $\gamma_R \in \{ \gamma^R_{\min}, \gamma^R_{\max} \}$.

  We now turn to
  \[ 
    \frac{\partial \pi_E}{\partial\gamma_K}(\mu_c^E) = \frac{(\gamma_E-\mu_c^E) p(\mu_c^E)}{\gamma_K},
  \]
  which is negative if $\mu_c^E > \gamma_E$ (implying that $\gamma_K = \gamma^K_{\max}$ if $\mu^E_c$ is a local maximum, by Lemma~\ref{lem:extremal_roots}). Otherwise, we have the bound
  $\mu_c^E  \le \gamma_{\max}^E$, which is dominated by $\beta^E_c(\gamma^E_{\max}, \gamma^K_{\max})$; see \eqref{betac}.

  Summarizing, an upper bound for the largest positive root of $\pi(\lambda)$ is given by
  \[\max\left\{
  \mu_c^E(\gamma_{\max}^A,  \gamma_{\min}^R,  \gamma_{\max}^K, \gamma_{\max}^E),
  \mu_c^E(\gamma_{\max}^A,  \gamma_{\max}^R,  \gamma_{\max}^K, \gamma_{\max}^E), \beta_c^E(\gamma^E_{\max}, \gamma^K_{\max})\right\}. 
  \]

  \noindent \underline{Step 2:} (Bounding $\mu^E_b$ from below.)
  To obtain a tight bound, we recall that
  $\gamma_K = \gamma_X - \gamma_E$ and define
  \begin{align}
	  \label{w_vs_pi}
    w(\lambda; \gamma_A, \gamma_R, \gamma_X, \gamma_E)  
    &\equiv (1+ \lambda)^2 (\gamma_A - \lambda) \gamma_X + \lambda p(\lambda) (1+\gamma_X) - \gamma_E \left( (1+\lambda)^2 (\gamma_A - \lambda) + p(\lambda) (1 + \lambda)\right) \nonumber \\
	  &= (1+ \lambda)^2 (\gamma_A - \lambda) (\gamma_X - \gamma_E) + \lambda p(\lambda) (1+\gamma_X - \gamma_E) - \gamma_E p(\lambda) =\nonumber 
	  \\ &= \pi_E(\lambda, \gamma_A, \gamma_R, \gamma_X- \gamma_E, \gamma_E) .
  \end{align}
  Aiming towards the application of Lemma~\ref{lem:extremal_roots} to $w$, we immediately observe that $\frac{\partial w}{\partial \lambda}(\mu^E_b) < 0$ and that
  \begin{align}
		\nonumber \frac{\partial w}{\partial \gamma_A}(\lambda) 
    &= \frac{\partial \pi_E}{\partial \gamma_A}(\lambda)
	  = \frac{{\pi_E}(\lambda) + \gamma_R (\lambda - 1)^2 \left(\lambda (1+\gamma_K) - \gamma_E\right)}{\gamma_A - \lambda}, \\
		\nonumber\frac{\partial w}{\partial \gamma_R}(\lambda) 
		\label{nablaR_w}
    &= \frac{\partial \pi_E}{\partial \gamma_R}(\lambda) 
	  = \left((1 + \gamma_K)\lambda - \gamma_E\right) (\lambda (2-\gamma_A) - 1), \\[-.8em] 
	  &&\\[-.8em]
		\nonumber \frac{\partial w}{\partial \gamma_X}(\lambda) 
    &= \frac{\partial \pi_E}{\partial \gamma_K}(\lambda)
	  = \frac{\pi_E(\lambda) + (\gamma_E-\lambda) p(\lambda)}{\gamma_K}, \\
		  \nonumber 
		  \frac{{\rm d} w}{{\rm  d} \gamma_E}(\lambda)  &\overset{\eqref{w_vs_pi}}{=}
	  -\frac{\partial \pi_E}{\partial \gamma_K}(\lambda) + \frac{\partial \pi_E}{\partial \gamma_E}(\lambda) \\
	  \nonumber &= -\frac{\pi_E(\lambda) + (\gamma_E -\lambda)}{\gamma_K} \, p(\lambda) - p(\lambda) =
	  \frac{-\pi_E(\lambda) + (\lambda - \gamma_X)}{\gamma_K} \, p(\lambda).
  \end{align}
  We now consider the expression $\mu_b^E (1+\gamma_K) - \gamma_E $.  We recall that $\mu_b^E$ must be smaller than $\gamma_{\min}^A$,
  since the polynomial $\pi_E(\lambda) = w(\lambda)$ is obtained by assuming $\lambda \not \in I_-\cup I_+$.
  The condition $\mu_b^E < \gamma_A$ is equivalent to $w(\gamma_A) < 0$, since $\pi_E$ is decreasing in $\lambda$ around $\mu_b^E$.
  Now observing that
  \[ w(\gamma_A) = -\gamma_R(\gamma_A-1)^2 (\gamma_A (1+\gamma_K) - \gamma_E ),\]
  we obtain that $\mu_b^E < \gamma_A$ implies 
  \[
    \gamma_A > \frac {\gamma_E}{1+\gamma_K}.
  \]
  This condition also implies that $\mu_b^E > \frac{ \gamma_E}{1+\gamma_K}$, due to 
  \[\pi_E\left(\frac{\gamma_E}{1+\gamma_K}\right) = 
  \left(1 + \frac{\gamma_E}{1+\gamma_K}\right)^2 \left(\gamma_A - \frac{\gamma_E}{1+\gamma_K} \right) \gamma_K > 0.\]
  We have proved that $\frac{\gamma_E}{1 + \gamma_K} < \mu_b^E < \gamma_A$, which provides
	\[
    \frac{\partial w}{\partial \gamma_A}(\mu_b^E) =
		\frac{\gamma_R (\mu_b^E - 1)^2 \left(\mu_b^E (1+\gamma_K) - \gamma_E\right)}{\gamma_A - \mu_b^E} > 0.
  \]
  Thus, if $\mu^E_b$ is a local minimum, then $\gamma_A = \gamma^A_{\min}$ by Lemma~\ref{lem:extremal_roots}.

  Now, the inequality 
  \[\mu_b^E(2 -\gamma_A) -1 < \gamma_A (2 -\gamma_A) -1 =-(\gamma_A-1)^2 < 0\]
  together with the previous discussion yields that
  \[
    \frac{\partial w}{\partial \gamma_R}(\mu_b^E) =
    \left[(1 + \gamma_K)\mu_b^E - \gamma_E\right] (\mu_b^E (2-\gamma_A) - 1) < 0.
  \]
  Lemma~\ref{lem:extremal_roots} yields that $\gamma_R = \gamma^R_{\max}$ if $\mu^E_b$ is a local minimum.

	{Now, the total derivative 
		\[ \frac{{\rm d} w}{{\rm d}  \gamma_E}(\mu_b^E) = \frac{\mu_b^E - \gamma_X}{\gamma_K} \, p(\mu_b^E)\]
		is positive for $\mu_b^E < \gamma_X$ (alternatively we have the bound $\mu_b^E \ge \gamma_{\min}^X$). 
		Using Lemma~\ref{lem:extremal_roots}, a lower bound for $\mu_b^E < \gamma_X$ can therefore be obtained with $\gamma_E \equiv 0$. With this value we determine the sign of the partial derivative with respect to $\gamma_X$:
		\[
      \frac{\partial w}{\partial \gamma_X}(\mu_b^E, \gamma_{\min}^A, \gamma_{\max}^R, \gamma_X, 0) = -\frac{\mu_b^E p(\mu_b^E)}{\gamma_K}  > 0.
    \]
		Hence, a lower bound for the positive root is
		\[ \min\{\gamma_{\min}^X, \mu_b^E(\gamma_{\min}^A, \gamma_{\max}^R, \gamma_{\min}^X, 0)\}.\]
		}
    Summarizing the results of Steps 1 and 2 yields the assertion.
  \end{proof}
  
  The next corollary states that the findings of \Cref{theo:E0} also hold when $\gamma_A = 1$.
   \begin{Corollary}
                                \label{Cor:pi_E}
                                        The eigenvalues of $\mathcal{P}^{-1} \mathcal{A}$, with $E \ne 0$  belong to
					         \[\left[\mu_a(\gamma_{\min}^A,  \gamma_{\max}^R,  \gamma_{\max}^K, ),
                                \lambda_-(\gamma_{\max}^A, \gamma_{\min}^R) \right]
                        \cup  I^{\pi_E}_+.
                        \]
                \end{Corollary}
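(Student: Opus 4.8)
The claim for $\gamma_A \ne 1$ is precisely \Cref{theo:E0}, so only the degenerate case $\gamma_A = 1$ needs attention, and the argument runs parallel to the proof of \Cref{Cor:pi}. By \eqref{cElambda} we have $\pi_E(\lambda;1,\gamma_R,\gamma_K,\gamma_E) = (\lambda-1)\,c^E(\lambda)$, so the eigenvalues generated by this case are $\lambda = 1$ together with the two roots $c_-^E < 0 < c_+^E$ of $c^E$. It thus suffices to show that $\{1,c_-^E,c_+^E\}$ lies in the set $\bigl[\mu_a(\gamma_{\min}^A,\gamma_{\max}^R,\gamma_{\max}^K),\,\lambda_-(\gamma_{\max}^A,\gamma_{\min}^R)\bigr]\cup I^{\pi_E}_+$ furnished by \Cref{theo:E0}, which then yields the corollary.

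The computational device is the $E\ne 0$ analogue of \eqref{pivsc}: combining \eqref{piEvspi}, \eqref{cElambda}, the alternative form \eqref{p_alternative} of $p$, and \eqref{pivsc} itself, one obtains by direct rearrangement
\[
  \pi_E(\lambda) = (\lambda-\gamma_A)\,c^E(\lambda) + (1-\gamma_A)\,\gamma_R\,(\lambda-1)\bigl((1+\gamma_K)\lambda-\gamma_E\bigr),
\]
which collapses to $(\lambda-1)c^E(\lambda)$ when $\gamma_A = 1$ and to \eqref{pivsc} when $\gamma_E = 0$. Evaluated at a root of $c^E$ (for fixed $\gamma_R,\gamma_K,\gamma_E$), the first term drops and only the explicitly-signed correction $(1-\gamma_A)\gamma_R(\lambda-1)\bigl((1+\gamma_K)\lambda-\gamma_E\bigr)$ survives; this is exactly the comparison tool for locating a root of $c^E$ relative to the corresponding root $\mu_a^E$, $\mu_b^E$, or $\mu_c^E$ of $\pi_E$ at neighbouring extremal parameter values.

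For the negative root I would first note $c_-^E \in [c_-(\gamma_R,\gamma_K),\,-\gamma_R]$ --- the $\gamma_A = 1$ specialization of $\mu_a^E \in [\mu_a,\lambda_-]$ from \eqref{Esketch}, which also follows directly from $c^E(c_-)\le 0$ and $c^E(-\gamma_R) = -\gamma_K(\gamma_R-1)^2 \le 0$ --- and then chain with \Cref{Cor:pi} (whose proof already gives $c_-(\gamma_R,\gamma_K) \ge \mu_a(\gamma_{\min}^A,\gamma_{\max}^R,\gamma_{\max}^K)$) and with \Cref{Lem:optp}, extended to the admissible value $\gamma_A = 1$ by continuity since $1\in(\gamma_{\min}^A,\gamma_{\max}^A)$ by \eqref{spect_int}, which delivers $-\gamma_R = \lambda_-(1,\gamma_R) \le \lambda_-(\gamma_{\max}^A,\gamma_{\min}^R)$; this places $c_-^E$ in the first interval. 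For the positive roots $1$ and $c_+^E$, I would bound $c_+^E$ over $I_R\times I_K\times I_E$ by applying \Cref{lem:extremal_roots} to $c^E(\lambda;\gamma_R,\gamma_K,\gamma_E)$, use $c^E(-1) = (1-\gamma_R)(1+\gamma_K+\gamma_E)$ together with the product of roots $c_-^E c_+^E = -(\gamma_K+\gamma_E\gamma_R)$ and the inequalities $\gamma_{\min}^K+\gamma_{\min}^E \le \gamma_{\min}^X \le 1 \le \gamma_{\max}^X$, $\gamma_{\min}^R \le 1$ to show that $1$ lies in the range of $c_+^E$, and finally feed the extremal endpoints through the displayed identity with $\gamma_A\in\{\gamma_{\min}^A,\gamma_{\max}^A\}$ and the sign data of \eqref{Esketch} ($\pi_E'<0$ at $\mu_b^E$, $\pi_E'>0$ at $\mu_c^E$, $\mu_b\le\mu_b^E$, $\mu_c\le\mu_c^E$) to sandwich both $1$ and $c_+^E$ between $\mu_b^E(\gamma_{\min}^A,\gamma_{\max}^R,\gamma_{\min}^X,0)$ and the maximum of the relevant $\mu_c^E$/$\beta_c^E$ terms, absorbing the result into $I^{\pi_E}_+ = [\mu_l^+,\mu_u^+]$ (the $\min\{\gamma_{\min}^X,\cdot\}$ and $\beta_c^E$ safeguards only widen the window).

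The principal difficulty is the positive-root part, and in particular the bookkeeping around the substitution $\gamma_K = \gamma_X - \gamma_E$ used in Step 2 of the proof of \Cref{theo:E0} to produce the safeguarded lower endpoint $\mu_l^+$: since $c^E$ and the roots $\mu_b^E,\mu_c^E$ all depend on $\gamma_E$, one must re-examine the branch $\mu_b^E \ge \gamma_{\min}^X$ versus $\mu_b^E < \gamma_X$ at $\gamma_A = 1$, where the defining inequality $w(\gamma_A) < 0$ degenerates to $w(1) = 0$; this amounts to re-checking the sign of $(1+\gamma_K)\lambda - \gamma_E$ at $\lambda\in\{1,c_+^E\}$ and the inequality $\lambda > \gamma_E/(1+\gamma_K)$ there, which are exactly the monotonicity inputs already established inside the proof of \Cref{theo:E0}. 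As a shortcut, since the target set of \Cref{theo:E0} is a fixed closed set and the roots of $\pi_E$ depend continuously on $(\gamma_A,\gamma_R,\gamma_K,\gamma_E)$ with $\gamma_A = 1$ interior to $[\gamma_{\min}^A,\gamma_{\max}^A]$, the entire $\gamma_A = 1$ case also follows from \Cref{theo:E0} by a one-line continuity argument, which I would record as the concise alternative after the hands-on verification.
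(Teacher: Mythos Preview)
Your proposal is essentially the paper's approach: both derive the identity $\pi_E(\lambda)=(\lambda-\gamma_A)c^E(\lambda)+\gamma_R(1-\gamma_A)(\lambda-1)\bigl[(1+\gamma_K)\lambda-\gamma_E\bigr]$ and then use sign comparisons at extremal parameter values to embed the $\gamma_A=1$ roots $\{1,c_-^E,c_+^E\}$ into the $\pi_E$-intervals of \Cref{theo:E0}. The paper carries this out uniformly by evaluating $\pi_E$ at the precomputed endpoints $c^E_{l,\pm},c^E_{u,\pm}$ (whose existence via \Cref{lem:extremal_roots} and the fact that they straddle $1$ were asserted just before \Cref{theo:E0}); you instead handle $c_-^E$ through the $\gamma_A=1$ specialisation of \eqref{Esketch} chained with \Cref{Cor:pi} and \Cref{Lem:optp}, which is a perfectly valid alternative and arguably cleaner since it avoids recomputing bounds on $c^E$.

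Two small points. First, your direct verification of $c_-^E\ge c_-$ has a sign slip: one actually has $c^E(c_-)=-\gamma_E(c_-+\gamma_R)\ge 0$ (since $c_-\le -\gamma_R$), not $\le 0$; the conclusion $c_-\le c_-^E$ then follows because $c^E$ is nonnegative only outside $[c_-^E,c_+^E]$. Your primary justification via \eqref{Esketch} is unaffected. Second, your continuity shortcut (roots of $\pi_E$ vary continuously in $\gamma_A$, $\gamma_A=1$ is interior to $I_A$, and the target set is closed) is sound and is a genuine simplification over both your hands-on argument and the paper's; the paper does not record it.
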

\begin{proof}
	It is sufficient to prove that the intervals characterizing  the roots of $c^E(\lambda)$ are contained in those
	defined by \eqref{intervalsE}.
	We denote
		as $c_{l,-}^{E}$, $c_{u,-}^{E}$, $c_{l,+}^{E}$, $c_{u,+}^{E}$ the bounds for the roots of $c^E(\lambda)$.
		To express $\pi_E(\lambda)$ in terms of  $c^E(\lambda)$, 
	we first observe from \eqref{cElambda} that
	\[ c^E(\lambda) = c(\lambda) - \gamma_E (\lambda + \gamma_R),\]
		then we use \eqref{piEvspi} and \eqref{pivsc} to write 
	\begin{align*}
		\pi_E(\lambda) &= \pi(\lambda) - \gamma_E p(\lambda)   \\
		 &=  (\lambda - \gamma_A) c(\lambda) 
		+ (1 + \gamma_K) \lambda \gamma_R (1 - \gamma_A) (\lambda-1)
		- \gamma_E p(\lambda) \\
		&= 	(\lambda -\gamma_A) c^E(\lambda) 
		+ \gamma_E (\lambda + \gamma_R)(\lambda -\gamma_A)  - \gamma_E p(\lambda)
		+ (1 + \gamma_K) \lambda \gamma_R (1 - \gamma_A) (\lambda-1)\\
		&= 	(\lambda -\gamma_A) c^E(\lambda) 
		+ \gamma_E \left((\lambda + \gamma_R)(\lambda -\gamma_A)  - p(\lambda) \right)
		+ (1 + \gamma_K) \lambda \gamma_R (1 - \gamma_A) (\lambda-1).
	\end{align*}
	Using \eqref{p_alternative}, 
we finally obtain that
		\[ \pi_E(\lambda) =  (\lambda -\gamma_A) c^E(\lambda)
	+ [(1 + \gamma_K) \lambda - \gamma_E] \gamma_R (1 - \gamma_A) (\lambda-1).
	\]
	 The signs of $\pi_E(\lambda)$ in $c_{l,-}^{E}$, $c_{u,-}^{E}$, $c_{l,+}^{E}$, $c_{u,+}^{E}$ are obtained by observing that $c^E\left(\frac{\gamma_E}{1 + \gamma_K}\right) < 0$, which implies that $\frac{\gamma_E}{1 + \gamma_K} < c_+^E$, and by making use of the following sketch:
\begin{center}
\begin{tabular}{|r||ccc|} \hline
	$\lambda =$ & $c_{\{l,u\},-}^{E}$ & $c_{l,+}^{E}$ & $c_{u,+}^{E}$ \\ \hline \hline
  $\lambda$ & $-$ & $+$ & $+$ \\
  $\lambda-1$ & $-$ & $-$ & $+$ \\
  $(1 + \gamma_K)\lambda - \gamma_E$ & $-$ & $+$ & $+$ \\ \hline
\end{tabular}
\end{center}
\begin{align*}
  \pi_E(c_{l,-}^E; \gamma_{\min}^A, \gamma_{\max}^R, \gamma_{\max}^K, \gamma_{\min}^E)   & > 0 = \pi_E(\mu_a^E; \gamma_{\min}^A, \gamma_{\max}^R, \gamma_{\max}^K, \gamma_{\min}^E), \\
  \pi_E(c_{u,-}^E; \gamma_{\max}^A, \gamma_{\min}^R, \gamma_{\min}^K, \gamma_{\max}^E)   & < 0 = \pi_E(\mu_a^E; \gamma_{\max}^A, \gamma_{\min}^R, \gamma_{\min}^K, \gamma_{\max}^E), \\
  \pi_E(c_{l,+}^E; \gamma_{\min}^A, \gamma_{\max}^R, \gamma_{\min}^K, \gamma_{\min}^E)   & < 0 =  \pi_E(\mu_b^E; \gamma_{\min}^A, \gamma_{\max}^R, \gamma_{\min}^K, \gamma_{\min}^E), \\
  \pi_E(c_{u,+}^E; \gamma_{\max}^A, \gamma_{\min}^R, \gamma_{\max}^K, \gamma_{\max}^E)   & < 0 =  \pi_E(\mu_c^E; \gamma_{\max}^A, \gamma_{\min}^R, \gamma_{\max}^K, \gamma_{\max}^E),
\end{align*}
showing that the intervals bounding the roots of $c^E(\lambda)$ are contained within those locating
the roots of $\pi_E(\lambda)$.
\end{proof}

\section{Refined upper bound when $C$ is invertible.} \label{sec:C_invertible}

We now consider the setting where $m = p$ and $C$ is invertible, in both cases $E = 0$ and $E \ne 0$.

\begin{Theorem}
        \label{theo_refined}
        If $C$ is  square and nonsingular, then any eigenvalue $\lambda$ of
	of $\mathcal{P}^{-1} \mathcal{A}$ not lying in $I_A$ is a root of $\pi(\lambda) = 0$ ($\pi_E(\lambda) = 0$ if $E \ne 0$), for a suitable value of $\gamma_A$, $\gamma_R$, $\gamma_K$, $\gamma_X$, and $\gamma_E$.
\end{Theorem}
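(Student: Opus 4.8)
The plan is to re-run the eliminations behind \Cref{Theo:pi} and \Cref{theo:E0}, but to replace the one step there that forced $\lambda \notin I_- \cup I_+$ — the inversion of $Y(\lambda)$, justified via \Cref{Theo:p} — by an argument that inverts only $K = \widehat X^{-1/2} C \widehat S^{-1/2}$, which is now legitimate because $m = p$ and $C$ is nonsingular. Two special values are disposed of first. Since $A$ and $S$ are SPD, $C$ has full row rank and $E$ is positive semidefinite, $\mathcal{A}$ is nonsingular, so $\lambda = 0$ is not an eigenvalue of $\mathcal{P}^{-1}\mathcal{A}$. If $\lambda = -1$, the third block of \eqref{Eq31.1} forces $(I + KK^{\top})z = 0$ (resp.\ $(I + X_{\text{prec}})z = 0$ when $E \neq 0$), hence $z = 0$, so $(x,y)$ is an eigenvector of $\mathcal{P}_0^{-1}\mathcal{A}_0$ for $-1 \notin I_A$; by \Cref{Theo:p}, $p(-1;\gamma_A,\gamma_R) = 0$ for suitable $\gamma_A, \gamma_R$, whence $\pi(-1) = 0$ and $\pi_E(-1) = \pi(-1) - \gamma_E p(-1) = 0$ by \eqref{piEvspi}.

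Now fix $\lambda \notin I_A$ with $\lambda \neq 0, -1$ and an eigenvector $(x,y,z)$; necessarily $z \neq 0$ (else $y = x = 0$) and $y \neq 0$ (else $K^{\top}z = 0$, so $z = 0$). Since $\lambda \notin I_A = [\lambda_{\min}(\bar A), \lambda_{\max}(\bar A)]$, the first two blocks of \eqref{Eq31.1} give $x = (1-\lambda)(\lambda I - \bar A)^{-1}R^{\top}y$ as in \eqref{Eq65} and $Y(\lambda)y = -(1+\lambda)K^{\top}z$ as in \eqref{Zlambda}. The key point: the third block reads $(1+\lambda)Ky = G(\lambda)z$ with $G(\lambda) := \lambda(I + KK^{\top})$, resp.\ $\lambda(I + KK^{\top}) - \bar E$, so $K^{-1}$ — not $Y(\lambda)^{-1}$ — yields $y = \tfrac{1}{1+\lambda}K^{-1}G(\lambda)z$. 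Pre-multiplying $Y(\lambda)y = -(1+\lambda)K^{\top}z$ by $y^{\top}$, substituting $Ky = \tfrac{1}{1+\lambda}G(\lambda)z$, and using the symmetry of $G(\lambda)$ collapses everything to $y^{\top}Y(\lambda)y = -z^{\top}G(\lambda)z$. By \Cref{Le1} applied to $\lambda I - \bar A$ together with the computation \eqref{yZy} (now carried out for $y$), the left side equals $\tfrac{p(\lambda;\gamma_A,\gamma_R)}{\gamma_A - \lambda}\lVert y\rVert^2$ with $\gamma_A \in I_A$ and $\gamma_R = q(S_{\text{prec}},y) \in I_R$; squaring $(1+\lambda)y = K^{-1}G(\lambda)z$ gives $(1+\lambda)^2\lVert y\rVert^2 = z^{\top}G(\lambda)(KK^{\top})^{-1}G(\lambda)z$. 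Eliminating $\lVert y\rVert^2$ leaves
\[
\frac{p(\lambda;\gamma_A,\gamma_R)}{\gamma_A - \lambda}\, z^{\top}G(\lambda)(KK^{\top})^{-1}G(\lambda)z = -(1+\lambda)^2\, z^{\top}G(\lambda)z .
\]

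When $E = 0$ this finishes cleanly: $G(\lambda) = \lambda(I + KK^{\top})$, so $G(\lambda)(KK^{\top})^{-1}G(\lambda) = \lambda^2\big(I + (KK^{\top})^{-1}\big)(I + KK^{\top})$, and after cancelling one factor $\lambda$ the identity becomes $\tfrac{\lambda p(\lambda)}{\gamma_A - \lambda}\cdot\tfrac{z^{\top}(I + (KK^{\top})^{-1})(I + KK^{\top})z}{z^{\top}(I + KK^{\top})z} = -(1+\lambda)^2$. The displayed ratio is a generalized Rayleigh quotient, with respect to the SPD matrix $I + KK^{\top}$, of a pencil of functions of $KK^{\top}$ alone, whose eigenvalues are exactly $1 + 1/\nu$ for $\nu \in \sigma(KK^{\top}) \subseteq I_K$ (equivalently, after the congruence $z \mapsto (I+KK^{\top})^{1/2}z$ it equals $1 + q((KK^{\top})^{-1},\cdot)$, to which \Cref{Le1} applies); since $\nu \mapsto 1 + 1/\nu$ is a continuous bijection of $I_K$ onto that spectrum interval, the ratio equals $1 + 1/\gamma_K$ for a unique $\gamma_K \in I_K$, and substituting and clearing denominators reproduces exactly $\pi(\lambda;\gamma_A,\gamma_R,\gamma_K) = 0$.

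The case $E \neq 0$ is where I expect the real work. The displayed identity still holds with $G(\lambda) = \lambda(I + KK^{\top}) - \bar E$, and one must match it against $\pi_E(\lambda;\gamma_A,\gamma_R,\gamma_K,\gamma_E) = 0$, i.e.\ $\tfrac{p(\lambda)}{\gamma_A - \lambda} = -\tfrac{(1+\lambda)^2 \gamma_K}{\lambda(1+\gamma_K) - \gamma_E}$. Because $\bar E$ and $KK^{\top}$ need not commute, $z^{\top}G(\lambda)(KK^{\top})^{-1}G(\lambda)z$ is no longer controlled by the spectrum of a single matrix, so the clean pencil argument is unavailable; instead I would treat $\gamma_E \in I_E$ as a free parameter, note that $\gamma_K$ is then forced to be an affine function of $\gamma_E$ (with $\gamma_X = \gamma_K + \gamma_E$), and use an intermediate-value argument — bounding the ratio $z^{\top}G(\lambda)z \big/ z^{\top}G(\lambda)(KK^{\top})^{-1}G(\lambda)z$ by the appropriate extremal eigenvalues of $KK^{\top}$ and $\bar E$ — to produce a valid pair $(\gamma_K,\gamma_E) \in I_K \times I_E$ with $\pi_E(\lambda;\cdot) = 0$, the degenerate sub-cases $p(\lambda) = 0$ (equivalently $z^{\top}G(\lambda)z = 0$) being settled by direct substitution as for $\lambda = -1$. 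Making that intermediate-value step precise is the crux of the proof.
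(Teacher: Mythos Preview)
Your $E=0$ argument is correct and essentially equivalent to the paper's, just organized in the reverse direction: you invert $K$ to express $y$ in terms of $z$ and then reduce to a generalized Rayleigh quotient in $KK^\top$, whereas the paper inverts $\lambda(I+KK^\top)$ to express $z$ in terms of $y$, uses the identity $K^\top(I+KK^\top)^{-1}K=(I+(K^\top K)^{-1})^{-1}$ (valid precisely because $K$ is invertible), and applies \Cref{Le1} once to $I+(K^\top K)^{-1}$. Both routes land on the same scalar equation $\pi(\lambda)=0$.

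For $E\ne 0$, however, you have made the problem harder than it is, and your sketched intermediate-value argument is neither precise nor necessary. The non-commutativity obstacle you flag is an artefact of eliminating in the direction $y=K^{-1}G(\lambda)z/(1+\lambda)$, which forces you to analyse $G(\lambda)(KK^\top)^{-1}G(\lambda)$. The paper instead restricts first to $\lambda<0$, where $G(\lambda)=\lambda(I+KK^\top)-\bar E$ is negative definite, solves $z=(1+\lambda)G(\lambda)^{-1}Ky$, and substitutes into \eqref{Zlambda} to obtain
\[
\frac{p(\lambda)}{\gamma_A-\lambda}+(1+\lambda)^2\,\frac{y^\top K^\top G(\lambda)^{-1}Ky}{y^\top y}=0.
\]
Because $K$ is invertible, $K^\top G(\lambda)^{-1}K=Q(\lambda)^{-1}$ with the \emph{single} negative-definite matrix $Q(\lambda):=K^{-1}G(\lambda)K^{-\top}=\lambda\bigl(I+(K^\top K)^{-1}\bigr)-K^{-1}\bar E K^{-\top}$. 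One application of \Cref{Le1} to $Q(\lambda)$ yields a vector $s$, and the substitution $v=K^{-\top}s$ gives simultaneously
\[
\frac{s^\top(K^\top K)^{-1}s}{s^\top s}=\frac{1}{\gamma_K},\qquad
\frac{s^\top K^{-1}\bar E K^{-\top}s}{s^\top s}=\frac{\gamma_E}{\gamma_K},
\]
with $\gamma_K=q(KK^\top,v)\in I_K$ and $\gamma_E=q(\bar E,v)\in I_E$ computed from the \emph{same} $v$. This produces $\pi_E(\lambda)=0$ directly, with no two-parameter matching or continuity argument required. For positive $\lambda$ (where $G(\lambda)$ need not be definite) the paper simply defers to the derivation at the start of Section~\ref{sec:E_not_zero}; the refinement afforded by \Cref{theo_refined} is only exploited in \Cref{Cor:Csquare} to sharpen the \emph{negative} interval, so nothing further is needed there.
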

        \begin{proof}
		Let $E \equiv 0$. We start by obtaining an expression for $z$ from \eqref{Eq34.1},
		\begin{equation*} 
		z = \frac{1+\lambda}{\lambda}\left(I + K K^{\top}\right )^{-1}K y,
		\end{equation*}
			and substitute it into \eqref{Zlambda}, yielding
			\[ 
		Y(\lambda)y + 
			  \frac{(1+\lambda)^2}{\lambda}K^\top\left(I + K K^{\top}\right )^{-1}K y  = 0. \]
			  By pre-multiplying the previous by $\frac{y^\top}{y^\top y} $, we obtain
			\begin{equation} \label{pinew} 
		0 = \frac{y^\top Y(\lambda)y}  
		{y^\top y}+
			  \frac{\frac{(1+\lambda)^2}{\lambda}y^\top K^\top\left(I + K K^{\top}\right )^{-1}K y}{y^\top y} 
				= \frac{p(\lambda)}{\gamma_A - \lambda} +
				\frac{(1+\lambda)^2}{\lambda}\frac{y^\top \left(I + (K^{\top} K)^{-1}\right )^{-1} y}{y^\top y},
			\end{equation}
			since $K^\top K$ is invertible by hypothesis.
		Applying now Lemma \ref{Le1} to $I + (K^{\top}K)^{-1}$ we obtain, for a suitable vector $s \ne 0$, 
\[ \frac{y^\top \left(I + (K^{\top} K)^{-1}\right )^{-1} y}{y^\top y} =  \left[\frac{s^\top (I + (K^\top K)^{-1}) s}{s^\top s}\right]^{-1} \overset{(v = K^{-\top} s)}{=} 
				 \left[1 + \frac{v^\top v}{v^\top K K^\top v}\right] ^{-1} =
				\left[1 + \frac{1}{\gamma_k}\right]^{-1}  = \frac{\gamma_K}{1 + \gamma_K}.\]
We can therefore rewrite \eqref{pinew} as
\begin{equation}
	\label{pinew_1}
	\frac{p(\lambda)}{\gamma_A-\lambda} + \frac{(1+\lambda)^2}{\lambda} \frac{\gamma_K}{1 + \gamma_K} = 0. 
\end{equation}
		Rearranging the  terms in \eqref{pinew_1} yields the usual polynomial equation $\pi(\lambda) = 0.$ \\[.5em]
		\noindent
			When $E \ne 0$, the counterpart of \eqref{Eq34.1} reads 
		\begin{equation*} 
			\left(\lambda(I + K K^{\top}) - \bar E\right) z = (1+\lambda) K y.
		\end{equation*}
                We have previously shown (see beginning of Section \ref{sec:E_not_zero})
  that the positive eigenvalues of $\mathcal{P}^{-1} \mathcal{A}$ are the roots of $\pi_E(\lambda) = 0$ and that they
  are bounded by $I_+^{\pi_E}$, as stated in \Cref{Cor:pi_E}. \\
                Let us now assume
		$\lambda < 0$. The matrix on the right-hand side is negative definite, so we can write
		\[ z =   \left(\lambda(I + K K^{\top}) - \bar E\right)^{-1} (1+\lambda) K y. \]
		Upon substitution of the previous into \eqref{Zlambda} and pre-multiplication by $\frac{y^\top}{y^\top y} $ we obtain
		 \begin{align} \label{piEnew}
                0 ={}& \frac{y^\top Y(\lambda)y}
                {y^\top y}+
			  \frac{(1+\lambda)^2 y^\top K^\top   \left(\lambda(I + K K^{\top}) - \bar E\right)^{-1}K y}{y^\top y} \nonumber \\
			    ={}&\frac{p(\lambda)}{\gamma_A-\lambda} + 
				(1+\lambda)^2\frac{y^\top (\overbrace{\lambda(I + (K^{\top} K)^{-1}) - K^{-1} \bar E
				K^{-\top}}^{Q(\lambda)} )^{-1} y}{y^\top y}  =
			 \frac{p(\lambda)}{\gamma_A-\lambda} + 
				 (1+\lambda)^2\frac{y^\top Q(\lambda)^{-1} y}{y^\top y}.
		 \end{align}
We now apply Lemma \ref{Le1} to the matrix function $Q(\lambda)$, obtaining, for a suitable nonzero vector $s$,
\[ \frac{y^\top Q(\lambda)^{-1} y}{y^\top y} =  \left[\lambda \, \frac{s^\top (I + (K^\top K)^{-1}) s}{s^\top s}
- \frac{s^\top K^{-1} \bar E K^{-\top}s}{s^\top s}\right]^{-1}  = 
				\left[\lambda\left(1 + \frac{1}{\gamma_K}\right) - \frac{\gamma_E}{\gamma_K} \right]^{-1}  = \frac{\gamma_K}{\lambda(1 + \gamma_K) - \gamma_E}.\]
We can therefore rewrite \eqref{piEnew} as
\begin{equation*}
	\frac{p(\lambda)}{\gamma_A-\lambda} + (1+\lambda)^2 \frac{\gamma_K}{\lambda(1 + \gamma_K) - \gamma_E} = 0. 
\end{equation*}
		Rearranging the  terms yields the usual polynomial equation $\pi_E(\lambda) = 0.$
\end{proof}

		{
The previous result allows us to conclude that the eigenvalues of $\mathcal{P}^{-1} \mathcal{A}$ are the roots
of $\pi(\lambda) = 0$ ($\pi_E(\lambda) = 0$), not lying in
	$[\gamma_{\min}^A,\gamma_{\max}^A]$. Hence, the upper bound for $\mu_a$ provided by \Cref{Lem:optpi} 
	is also an upper bound for the negative eigenvalues of the preconditioned matrix, as stated below.
	}

	\begin{Corollary}
		\label{Cor:m=p}
		Let $E \equiv 0$. Then the eigenvalues of $\mathcal{P}^{-1} \mathcal{A}$ not lying in $I_A$  are contained in 
                       $ I^{\pi}_- \cup 
                        I^{\pi}_+  $.
	\end{Corollary}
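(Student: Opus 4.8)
The plan is to read off the statement as an almost immediate consequence of \Cref{theo_refined}, combined with the root bounds already established in \Cref{Lem:optpi} and in the proof of \Cref{Cor:pi}. The conceptual point is that the looser negative interval $\left[\mu_a(\gamma_{\min}^A,\gamma_{\max}^R,\gamma_{\max}^K),\,\lambda_-(\gamma_{\max}^A,\gamma_{\min}^R)\right]$ appearing in \Cref{Cor:pi} was forced only by the ``or lie in $I_-\cup I_+$'' alternative of \Cref{Theo:pi}; when $m=p$ and $C$ is nonsingular this alternative disappears, since \Cref{theo_refined} guarantees that \emph{every} eigenvalue $\lambda\notin I_A$ is genuinely a root of $\pi(\lambda;\gamma_A,\gamma_R,\gamma_K)=0$ for an admissible triple $(\gamma_A,\gamma_R,\gamma_K)\in I_A\times I_R\times I_K$. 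Consequently the negative part of the bound may be tightened from $[\mu_a(\cdots),\lambda_-(\cdots)]$ to the genuine root interval $I^{\pi}_-$.

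First I would fix an eigenvalue $\lambda\notin I_A=[\gamma_{\min}^A,\gamma_{\max}^A]$ and apply \Cref{theo_refined} (with $E\equiv0$) to obtain a triple $(\gamma_A,\gamma_R,\gamma_K)$, with $\gamma_A\in I_A$, $\gamma_R\in I_R$, $\gamma_K\in I_K$, such that $\pi(\lambda;\gamma_A,\gamma_R,\gamma_K)=0$. If $\gamma_A\neq1$, then \Cref{Lem:optpi} applies verbatim and yields $\lambda\in I^{\pi}_-\cup I^{\pi}_+$. If instead $\gamma_A=1$, I would invoke the factorization $\pi(\lambda;1,\gamma_R,\gamma_K)=(\lambda-1)\,c(\lambda;\gamma_R,\gamma_K)$ recorded before \Cref{Lem:optpi}: by assumption \eqref{spect_int} the value $\lambda=1$ lies in $I_A$ and is therefore irrelevant to the statement, while any other root of $\pi(\cdot;1,\gamma_R,\gamma_K)$ is a root of $c$ and hence, by \eqref{cbounds}, lies in $[c_-^{\min},c_-^{\max}]\cup[c_+^{\min},c_+^{\max}]$. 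It then suffices to recall that in the proof of \Cref{Cor:pi} these two $c$-intervals were already shown to satisfy $[c_-^{\min},c_-^{\max}]\subseteq I^{\pi}_-$ and $[c_+^{\min},c_+^{\max}]\subseteq I^{\pi}_+$, which closes the case $\gamma_A=1$ as well.

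I do not expect a genuine obstacle here: the substantive work lives entirely in \Cref{theo_refined}, \Cref{Lem:optpi}, and the proof of \Cref{Cor:pi}. The only mild point of care is to confirm that the $\gamma_A=1$ containment argument from the proof of \Cref{Cor:pi} transfers unchanged to the present setting; but that argument relied solely on the algebraic identities \eqref{p_alternative} and \eqref{pivsc} and on the sign evaluations of $\pi$ at the $c$-interval endpoints, none of which uses the invertibility of $C$, so it may be reused without modification. A one-line remark contrasting the sharpened negative interval $I^{\pi}_-$ obtained here with the weaker $[\mu_a(\cdots),\lambda_-(\cdots)]$ of \Cref{Cor:pi} would round off the proof.
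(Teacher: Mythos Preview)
Your proposal is correct and follows essentially the same route as the paper, which does not give a formal proof but only the brief remark preceding the corollary: once \Cref{theo_refined} guarantees that every eigenvalue outside $I_A$ is a genuine root of $\pi(\lambda)=0$, the bounds of \Cref{Lem:optpi} apply directly. Your explicit handling of the degenerate case $\gamma_A=1$ via the factorization and the $c$-interval containment from the proof of \Cref{Cor:pi} is a welcome elaboration that the paper leaves implicit.
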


	\begin{Corollary}
		\label{Cor:Csquare}
		Let $E  \ne 0$. Then the eigenvalues of $\mathcal{P}^{-1} \mathcal{A}$ not lying in $I_A$  are contained in 
		       \[ \left[ \mu_a^E(\gamma_{\min}^A, \gamma_{\max}^R, \gamma_{\max}^X, \gamma_{\min}^E),
\mu_a^E(\gamma_{\max}^A, \gamma_{\min}^R, \gamma_{\min}^X, \gamma_{\min}^X - \gamma_{\min}^K )\right]
		       \cup 
			 I^{\pi_E}_+ . \]
	\end{Corollary}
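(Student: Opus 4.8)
The plan is to read the bound off \Cref{theo_refined} combined with the extremal-root calculus of \Cref{lem:extremal_roots}, in the same spirit as the proofs of \Cref{theo:E0} and \Cref{Cor:pi_E}. Since $m=p$ and $C$ is invertible, \Cref{theo_refined} guarantees that every eigenvalue of $\mathcal{P}^{-1}\mathcal{A}$ outside $I_A$ is a root of $\pi_E(\lambda)=0$ for some admissible triple $(\gamma_A,\gamma_R,\gamma_K)$, equivalently some admissible $(\gamma_A,\gamma_R,\gamma_X,\gamma_E)$ with $\gamma_K=\gamma_X-\gamma_E$. In particular the positive eigenvalues are already confined to the positive interval $I^{\pi_E}_+$ of \Cref{theo:E0}, by exactly the argument of \Cref{Cor:pi_E}; so the only genuinely new content is the sharper bound on the negative eigenvalues. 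A negative eigenvalue can only equal the unique negative root $\mu_a^E$ of $\pi_E$, so it suffices to bound the range of $\mu_a^E$ over the admissible parameter set.

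First I would record the sign data at $\lambda=\mu_a^E<0$: because $\pi_E$ is a cubic with positive leading coefficient and $\mu_a^E<0<\mu_b^E<\mu_c^E$, one has $\frac{\partial\pi_E}{\partial\lambda}(\mu_a^E)>0$ (already noted in \eqref{Esketch}); because $\mu_a^E\le\lambda_-$ one has $p(\mu_a^E)\ge 0$; and $\mu_a^E<0$ while $\gamma_A,\gamma_X,\gamma_K>0$, $\gamma_E\ge 0$, $2-\gamma_A>0$. Substituting these into the partial-derivative identities for $\pi_E$, and for its reparametrisation $w$ with $\gamma_K=\gamma_X-\gamma_E$, derived in the proof of \Cref{theo:E0}, gives $\frac{\partial\pi_E}{\partial\gamma_A}(\mu_a^E)<0$, $\frac{\partial\pi_E}{\partial\gamma_R}(\mu_a^E)>0$, $\frac{\partial\pi_E}{\partial\gamma_K}(\mu_a^E)=\frac{\partial w}{\partial\gamma_X}(\mu_a^E)>0$, and $\frac{\partial\pi_E}{\partial\gamma_E}(\mu_a^E)<0$, $\frac{{\rm d}w}{{\rm d}\gamma_E}(\mu_a^E)<0$. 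Applying \Cref{lem:extremal_roots} to $w$ over $I_A\times I_R\times I_X\times I_E$ then forces, at a local minimum of $\mu_a^E$, the values $\gamma_A=\gamma_{\min}^A$, $\gamma_R=\gamma_{\max}^R$, $\gamma_X=\gamma_{\max}^X$, $\gamma_E=\gamma_{\min}^E$ --- the left endpoint of the claimed interval --- and, at a local maximum, $\gamma_A=\gamma_{\max}^A$, $\gamma_R=\gamma_{\min}^R$, $\gamma_X=\gamma_{\min}^X$, with $\gamma_E$ driven to its largest admissible value.

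The crux is pinning down that largest admissible $\gamma_E$, and this is the step I expect to require the most care. The admissible set is not a box: besides $\gamma_K\in I_K$, $\gamma_E\in I_E$, $\gamma_X\in I_X$ there is the rigid coupling $\gamma_X=\gamma_K+\gamma_E$ coming from $X_{\text{prec}}=KK^\top+\bar E$. With $\gamma_X$ set to $\gamma_{\min}^X$, the requirement $\gamma_K=\gamma_X-\gamma_E\ge\gamma_{\min}^K$ caps $\gamma_E$ at $\gamma_{\min}^X-\gamma_{\min}^K$, and since $\mu_a^E$ is increasing in $\gamma_E$ along $\gamma_X=\gamma_{\min}^X$ this cap is attained, giving the right endpoint $\mu_a^E(\gamma_{\max}^A,\gamma_{\min}^R,\gamma_{\min}^X,\gamma_{\min}^X-\gamma_{\min}^K)$. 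Legitimacy of this value rests on $\gamma_{\min}^X-\gamma_{\min}^K\in[\gamma_{\min}^E,\gamma_{\max}^E]$, which is Weyl-inequality bookkeeping on $\bar E\succeq 0$: $\lambda_{\min}(KK^\top+\bar E)\ge\lambda_{\min}(KK^\top)+\lambda_{\min}(\bar E)$ and $\lambda_{\min}(KK^\top+\bar E)\le\lambda_{\min}(KK^\top)+\lambda_{\max}(\bar E)$. Organising the coupled constraints so that \Cref{lem:extremal_roots} may be invoked cleanly, and confirming that the corner it identifies is genuinely the extremiser over the coupled set, is where the bulk of the work lies.

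Finally, the degenerate case $\gamma_A=1$ is dealt with as in \Cref{Cor:pi_E}: there $\lambda=1$ is a root of $\pi_E$ and the other two roots solve $c^E(\lambda)=0$; using the identity $\pi_E(\lambda)=(\lambda-\gamma_A)\,c^E(\lambda)+[(1+\gamma_K)\lambda-\gamma_E]\,\gamma_R(1-\gamma_A)(\lambda-1)$ from that proof, together with $c^E\bigl(\frac{\gamma_E}{1+\gamma_K}\bigr)<0$ and the fact that $c^E$ has exactly one negative root, one evaluates $\pi_E$ at the candidate endpoints for $c_-^E$ and checks that its interval lies inside the one claimed. Combining the two cases with the already-known positive part $I^{\pi_E}_+$ then yields the assertion.
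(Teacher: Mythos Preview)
Your proposal is correct and follows essentially the same route as the paper: invoke \Cref{theo_refined}, compute the signs of the partial derivatives of $w$ at $\mu_a^E$, apply \Cref{lem:extremal_roots}, and then observe that with $\gamma_X=\gamma_{\min}^X$ the coupling $\gamma_K=\gamma_X-\gamma_E\ge\gamma_{\min}^K$ caps $\gamma_E$ at $\gamma_{\min}^X-\gamma_{\min}^K$. Your treatment is actually slightly more thorough than the paper's own proof, which focuses only on the upper endpoint and does not spell out the Weyl-inequality check that $\gamma_{\min}^X-\gamma_{\min}^K\le\gamma_{\max}^E$, nor the lower endpoint, nor the $\gamma_A=1$ case (the latter is in fact unnecessary here, since the sign computations for $w$ at $\mu_a^E$ remain valid at $\gamma_A=1$ by continuity).
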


\begin{proof}
	     {
        To improve the  upper bound of the negative eigenvalues, we consider
        the partial derivatives of $w(\lambda)$,  as displayed in \eqref{nablaR_w}.}
  From \Cref{sign}, we deduce that $\frac{\partial \pi_E}{\partial \lambda}(\mu^E_a) > 0$, and the signs of the other partial derivatives evaluated at $\mu^E_a < 0$ satisfy
  \begin{align*}
    \frac{\partial w}{\partial \gamma_A}(\mu_a^E) &=
    \frac{\gamma_R (\mu_a^E - 1)^2 \left(\mu_a^E (1+\gamma_K) - \gamma_E\right)}{\gamma_A - \mu_a^E} < 0, \\
    \frac{\partial w}{\partial \gamma_R}(\mu_a^E) &= \left[(1 + \gamma_K)\mu_a^E - \gamma_E\right] (\mu_a^E (2-\gamma_A) - 1) > 0, \\
        \frac{\partial w}{\partial \gamma_X}(\mu_a^E) &= \frac{(\gamma_E-\mu_a^E) p(\mu_a^E)}{\gamma_K} > 0, \\
	  \frac{{\rm d} w}{{\rm d} \gamma_E}(\mu_a^E)   &= 
		    \frac{\mu_a^E - \gamma_X}{\gamma_K}p(\mu_a^E) < 0.
  \end{align*}
	Applying once again Lemma \ref{lem:extremal_roots}, the
	maximum of $\mu_a^E$ is obtained for $\gamma_E$ equal
        to its maximum value, which, in this case, is not necessarily $\gamma_{\max}^E$. In fact,
        after the change of variables $\gamma_K = \gamma_X - \gamma_E$, the indicator
        $\gamma_E$ must satisfy
        $0 < \gamma_{\min}^K \le \gamma_K = \gamma_X - \gamma_E$.
        Since
        \[w(\lambda; \gamma_{\max}^A, \gamma_{\min}^R, \gamma_{\min}^X, \gamma_E) \equiv
        \pi_E(\lambda; \gamma_{\max}^A, \gamma_{\min}^R, \gamma_{\min}^X-\gamma_E, \gamma_E),  \]
                       the maximum value of $\gamma_E$ does not exceed  $\min\{\gamma_{\max}^E,
                       \gamma_{\min}^X - \gamma_{\min}^K \} = \gamma_{\min}^X - \gamma_{\min}^K$, as in the assertion of the corollary.
%
\end{proof}

	\begin{figure}[h!]
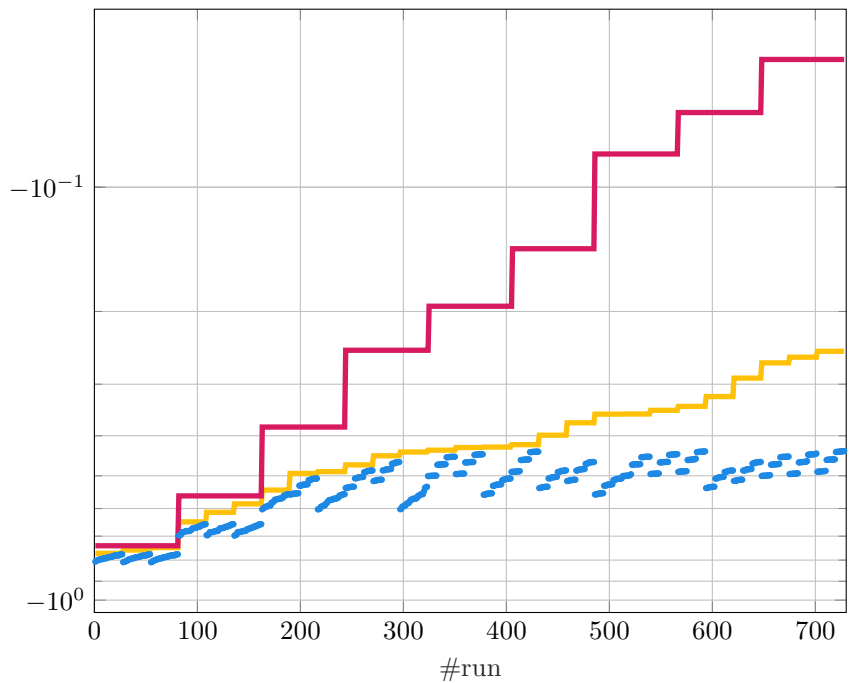

	\begin{center}

%
%
%
		\caption{Comparisons between the bounds based on $\lambda_-$ (red line),  and the refined upper bounds from 
		\Cref{Cor:m=p} (yellow line) 
		for the negative eigenvalues. Case with $E \equiv 0$ and a square invertible matrix $C$ (compare with \Cref{eigvsbounds},
		top-left plot).}
		\label{newbounds}
	\end{center}
	\end{figure}
	We conclude this section by showing a graphical interpretation of the bounds just developed. In Figure \ref{newbounds}
	we report the results  obtained by running 25 times each test case of Table \ref{TabPar}, and imposing
	that $m = p$. The negative eigenvalues are reported, as well as  the bounds provided by Corollary \ref{Cor:pi}
	and the bounds stated in Corollary \ref{Cor:m=p}.

	\section{Numerical Experiments} \label{sec:numerics}

We now seek to validate numerically our eigenvalue bounds for preconditioned double saddle-point systems, through two model problems from PDE-constrained optimization. For a range of problem setups, we present extremal (negative and positive) eigenvalues of $\mathcal{P}^{-1} \mathcal{A}$, along with theoretical bounds. All matrices are generated in Python using the package \cite{skfem2020}, and we solve the resulting systems in {\scshape Matlab} R2018a. For reference, we also provide the iteration numbers required for the solution of the relevant systems with {\scshape Matlab}'s inbuilt preconditioned {\scshape Minres} \cite{minres} routine to tolerance $10^{-10}$. All tests are carried out on an Intel(R) Core(TM) i7-6700T CPU @ 2.80GHz quad-core processor.

The problems we consider are of the form
\begin{align*}
\min_{y,u} \quad &\mathcal{J}(y,u) \\
\text{s.t.} \quad &\left\{\begin{array}{rl}
-\Delta y+y+u=0 & \text{in }\Omega, \\
\frac{\partial y}{\partial n}=0 & \text{on }\partial\Omega, \\
\end{array}\right.
\end{align*}
where $\Omega$ is a domain with boundary $\partial\Omega$. Here, $y$ and $u$ denote \emph{state} and \emph{control variables}. The cost functional $\mathcal{J}(y,u)$ may be of the form
\begin{equation*}
\mathcal{J}_{\Omega}(y,u) = \frac{1}{2}\left\|y-\widehat{y}\right\|_{L^2(\Omega)}^2+\frac{\beta}{2}\left\|u\right\|_{L^2(\Omega)}^2 \quad \text{or} \quad \mathcal{J}_{\partial\Omega}(y,u) = \frac{1}{2}\left\|y-\widehat{y}\right\|_{L^2(\partial\Omega)}^2+\frac{\beta}{2}\left\|u\right\|_{L^2(\Omega)}^2,
\end{equation*}
with $\widehat{y}$ a specified \emph{desired state}, and $\beta>0$ a regularization parameter. With $\mathcal{J} = \mathcal{J}_{\Omega}$ we refer to this as a \emph{full observation problem}, and with $\mathcal{J} = \mathcal{J}_{\partial\Omega}$ we denote this as a \emph{boundary observation problem}. In the subsequent results, we consider $\Omega = (0,1)^2$ and discretize these problems using P1 finite elements, although there is considerable flexibility as to the setup we could select. We highlight that it would also be perfectly possible to solve either the full or boundary observation problems by tackling a classical (generalized) saddle-point system; however, as the objective of this work is to examine the behaviour of the eigenvalues of preconditioned double saddle-point systems, we follow this approach. We note that both full and boundary observation problems lead to systems where $E \neq 0$ and $C$ is invertible, and we will use this for our interpretation of our analytic bounds.

\subsection{Full Observation Problem}

\begin{table}[b]
\centering
\caption{Computed eigenvalues of $\mathcal{P}^{-1} \mathcal{A}$ and bounds, for full observation problem with $h = 2^{-4}$, $\beta = 10^{-2}$, and a range of Chebyshev semi-iterations $\ell$. Results are presented in the following order, from left to right: lower bound on negative eigenvalues, computed smallest negative eigenvalue, computed largest negative eigenvalue, upper bound on negative eigenvalues, lower bound on positive eigenvalues, computed smallest positive eigenvalue, computed largest positive eigenvalue, upper bound on positive eigenvalues.}\label{Table1}
\begin{tabular}{|c||c|c||c|c||c|c||c|c|}
\hline
$\ell$ & $\text{Bound}_l^-$ & $\rho_l^-$ & $\rho_u^-$ & $\text{Bound}_u^-$ & $\text{Bound}_l^+$ & $\rho_l^+$ & $\rho_u^+$ & $\text{Bound}_u^+$ \\ \hline \hline
1 & $-8.4013$ & $-1.3980$ & $-0.4969$ & $-0.0979$ & 0.0331 & 0.3518 & 2.7187 & 5.1949 \\ \hline
2 & $-2.2801$ & $-1.2133$ & $-0.7622$ & $-0.4926$ & 0.2355 & 0.6118 & 1.6288 & 3.9195 \\ \hline
3 & $-1.2769$ & $-1.0740$ & $-0.9266$ & $-0.7966$ & 0.4718 & 0.6570 & 1.3591 & 3.0825 \\ \hline
4 & $-1.0796$ & $-1.0247$ & $-0.9750$ & $-0.9280$ & 0.5867 & 0.6594 & 1.3321 & 2.8085 \\ \hline
5 & $-1.0253$ & $-1.0082$ & $-0.9918$ & $-0.9755$ & 0.6283 & 0.6596 & 1.3299 & 2.7199 \\ \hline
7 & $-1.0083$ & $-1.0009$ & $-0.9991$ & $-0.9918$ & 0.6424 & 0.6596 & 1.3297 & 2.6883 \\ \hline
10 & $-1.0028$ & $-1.0000$ & $-1.0000$ & $-0.9973$ & 0.6472 & 0.6596 & 1.3297 & 2.6804 \\ \hline
\end{tabular}
\end{table}

\begin{table}[b]
\centering
\caption{Computed eigenvalues of $\mathcal{P}^{-1} \mathcal{A}$ and bounds, for full observation problem with $h = 2^{-4}$, $\beta = 10^{-4}$, and a range of Chebyshev semi-iterations $\ell$.}\label{Table2}
\begin{tabular}{|c||c|c||c|c||c|c||c|c|}
\hline
$\ell$ & $\text{Bound}_l^-$ & $\rho_l^-$ & $\rho_u^-$ & $\text{Bound}_u^-$ & $\text{Bound}_l^+$ & $\rho_l^+$ & $\rho_u^+$ & $\text{Bound}_u^+$ \\ \hline \hline
1 & $-7.3358$ & $-1.3634$ & $-0.4953$ & $-0.0979$ & 0.0334 & 0.3506 & 2.0723 & 4.2244 \\ \hline
2 & $-2.2530$ & $-1.2119$ & $-0.7619$ & $-0.4926$ & 0.2372 & 0.6455 & 1.5477 & 3.7137 \\ \hline
3 & $-1.2761$ & $-1.0739$ & $-0.9261$ & $-0.7966$ & 0.4757 & 0.6609 & 1.3113 & 3.0035 \\ \hline
4 & $-1.0796$ & $-1.0247$ & $-0.9750$ & $-0.9280$ & 0.5916 & 0.6612 & 1.3029 & 2.7578 \\ \hline
5 & $-1.0253$ & $-1.0082$ & $-0.9918$ & $-0.9755$ & 0.6335 & 0.6614 & 1.3021 & 2.6758 \\ \hline
7 & $-1.0083$ & $-1.0009$ & $-0.9991$ & $-0.9918$ & 0.6478 & 0.6615 & 1.3020 & 2.6485 \\ \hline
10 & $-1.0028$ & $-1.0000$ & $-1.0000$ & $-0.9973$ & 0.6526 & 0.6615 & 1.3020 & 2.6411 \\ \hline
\end{tabular}
\end{table}

\begin{table} 
\centering
\caption{Computed eigenvalues of $\mathcal{P}^{-1} \mathcal{A}$ and bounds, for full observation problem with $h = 2^{-5}$, $\beta = 10^{-4}$, and a range of Chebyshev semi-iterations $\ell$.}\label{Table3}
\begin{tabular}{|c||c|c||c|c||c|c||c|c|}
\hline
$\ell$ & $\text{Bound}_l^-$ & $\rho_l^-$ & $\rho_u^-$ & $\text{Bound}_u^-$ & $\text{Bound}_l^+$ & $\rho_l^+$ & $\rho_u^+$ & $\text{Bound}_u^+$ \\ \hline \hline
1 & $-8.0820$ & $-1.3891$ & $-0.4967$ & $-0.0979$ & 0.0332 & 0.3516 & 2.4877 & 4.9041 \\ \hline
2 & $-2.2751$ & $-1.2130$ & $-0.7619$ & $-0.4926$ & 0.2360 & 0.6222 & 1.6142 & 3.8810 \\ \hline
3 & $-1.2767$ & $-1.0740$ & $-0.9265$ & $-0.7966$ & 0.4729 & 0.6567 & 1.3509 & 3.0692 \\ \hline
4 & $-1.0796$ & $-1.0247$ & $-0.9750$ & $-0.9280$ & 0.5880 & 0.6584 & 1.3297 & 2.7992 \\ \hline
5 & $-1.0253$ & $-1.0082$ & $-0.9918$ & $-0.9755$ & 0.6297 & 0.6584 & 1.3281 & 2.7117 \\ \hline
7 & $-1.0083$ & $-1.0009$ & $-0.9991$ & $-0.9918$ & 0.6439 & 0.6585 & 1.3279 & 2.6807 \\ \hline
10 & $-1.0028$ & $-1.0000$ & $-1.0000$ & $-0.9973$ & 0.6487 & 0.6585 & 1.3279 & 2.6731 \\ \hline
\end{tabular}
\end{table}

\begin{table} 
\centering
\caption{Computed eigenvalues of $\mathcal{P}^{-1} \mathcal{A}$ and bounds, for full observation problem with $h = 2^{-6}$, $\beta = 10^{-4}$, and a range of Chebyshev semi-iterations $\ell$.}\label{Table4}
\begin{tabular}{|c||c|c||c|c||c|c||c|c|}
\hline
$\ell$ & $\text{Bound}_l^-$ & $\rho_l^-$ & $\rho_u^-$ & $\text{Bound}_u^-$ & $\text{Bound}_l^+$ & $\rho_l^+$ & $\rho_u^+$ & $\text{Bound}_u^+$ \\ \hline \hline
1 & $-8.5313$ & $-1.4016$ & $-0.4970$ & $-0.0979$ & 0.0331 & 0.3518 & 2.8363 & 5.3130 \\ \hline
2 & $-2.2817$ & $-1.2133$ & $-0.7619$ & $-0.4926$ & 0.2354 & 0.6117 & 1.6341 & 3.9321 \\ \hline
3 & $-1.2769$ & $-1.0740$ & $-0.9267$ & $-0.7966$ & 0.4716 & 0.6558 & 1.3626 & 3.0878 \\ \hline
4 & $-1.0796$ & $-1.0247$ & $-0.9750$ & $-0.9280$ & 0.5864 & 0.6585 & 1.3387 & 2.8115 \\ \hline
5 & $-1.0253$ & $-1.0082$ & $-0.9918$ & $-0.9755$ & 0.6280 & 0.6588 & 1.3370 & 2.7222 \\ \hline
7 & $-1.0083$ & $-1.0009$ & $-0.9991$ & $-0.9918$ & 0.6421 & 0.6588 & 1.3368 & 2.6905 \\ \hline
10 & $-1.0028$ & $-1.0000$ & $-1.0000$ & $-0.9973$ & 0.6468 & 0.6588 & 1.3368 & 2.6821 \\ \hline
\end{tabular}
\end{table}

Upon discretization of the full observation problem, and suitable re-arrangement of the resulting linear system, we obtain
\begin{equation}\label{PDECO_system}
\left(\begin{array}{ccc}
\beta M & M & O \\ M & O & L \\ O & L & M \\
\end{array}\right)\left(\begin{array}{c}
u_h \\ p_h \\ y_h \\
\end{array}\right)=\left(\begin{array}{c}
0 \\ 0 \\ \widehat{y}_h \\
\end{array}\right),
\end{equation}
where $y_h$, $u_h$, and $p_h$ denote the discretized state, control, and \emph{adjoint variables}, and $\widehat{y}_h$ arises from the discretized desired state, which in this example is a Gaussian function, that is $\widehat{y} = \text{exp}(-50((x_1-\frac{1}{2})^2+(x_2-\frac{1}{2})^2))$, with $x_1$ and $x_2$ denoting the spatial coordinates. The matrix $M$ denotes a finite element mass matrix, and $L$ the sum of a stiffness matrix and a mass matrix.

Labelling \eqref{PDECO_system} as a double saddle-point system, we have
\begin{equation*}
A = \beta M, \quad S = \frac{1}{\beta} M, \quad X = \beta L M^{-1} L + M.
\end{equation*}
Within our numerical tests, we approximate $A$ and $S$ using a number of iterations of Chebyshev semi-iteration (see \cite{GVI,GVII,WathenRees}) with Jacobi splitting to approximate the action of $M^{-1}$ on a vector. This is known to be an optimal method for the matrices under consideration \cite{WathenRees}. In particular, we are interested in the impact of varying the number of inner iterations. To approximate $X$, we take
\begin{equation*}
\widehat{X} = \frac{3}{4} \big( \sqrt{\beta} \, L + M \big)_{\text{AMG}} \, M^{-1} \, \big( \sqrt{\beta} \, L + M \big)_{\text{AMG}},
\end{equation*}
where $(\cdot)_{\text{AMG}}$ denotes the application of 2 V-cycles of the \texttt{HSL\_MI20} algebraic multigrid solver \cite{HSL_MI20,HSL_MI20_code} to a given matrix, with 2 symmetric Gauss--Seidel iterations serving as a pre- and post-smoother. The constant $\frac{3}{4}$ within $\widehat{X}$ is included to ensure the eigenvalues of the preconditioned matrix $X$ (if the $\sqrt{\beta} \, L + M$ matrices were approximated exactly) are contained within the interval $[\frac{2}{3}, \frac{4}{3}]$, that is symmetrically distributed about 1 (see \cite{PW12} for the result without this factor).

In Table \ref{Table1} we show the extremal negative and positive eigenvalues ($\rho_l^-$, $\rho_u^-$, $\rho_l^+$, $\rho_u^+$) of the preconditioned linear system, for the full observation problem with mesh parameter $h = 2^{-4}$ and $\beta = 10^{-2}$, with different numbers of Chebyshev semi-iterations $\ell$ applied to approximate $A^{-1}$ and $S^{-1}$. In Tables \ref{Table2}, \ref{Table3}, and \ref{Table4}, we present these results for $h = 2^{-4}$, $h = 2^{-5}$, and $h = 2^{-6}$, respectively, with $\beta = 10^{-4}$. We also provide the analytic bounds ($\text{Bound}_l^-$, $\text{Bound}_u^-$, $\text{Bound}_l^+$, $\text{Bound}_u^+$), which are obtained from the methodology of this paper. To arrive at these bounds, we make use of some theoretical properties of the matrices involved. For instance, given $\ell$ iterations of Chebyshev semi-iteration, we may bound $\gamma_{\min}^A$, $\gamma_{\max}^A$, $\gamma_{\min}^R$, and $\gamma_{\max}^R$ as follows:
\begin{equation*}
[\gamma_{\min}^A,\gamma_{\max}^A] \in [1-\eta, 1+\eta], \quad [\gamma_{\min}^R,\gamma_{\max}^R] \in [(1-\eta)^2, (1+\eta)^2], \qquad \text{where }~\eta = T_{\ell} \left( \frac{\lambda_{\max}^M-\lambda_{\min}^M}{\lambda_{\max}^M+\lambda_{\min}^M} \right),
\end{equation*}
with $T_{\ell}$ the $\ell$-th Chebyshev polynomial, and $\lambda_{\min}^M$ and $\lambda_{\max}^M$ denoting the minimum and maximum eigenvalues of the mass matrix preconditioned by its diagonal. 
To support the bounds for $\gamma_R$, we simply observe that 
here $\widehat{S}^{-1} \widetilde{S}$ is similar to $S_{\text{prec}} = \widehat{S}^{-1/2} \widetilde{S} \widehat{S}^{-1/2} = ( \widehat{M}^{-1/2} M \widehat{M}^{-1/2} ) ( \widehat{M}^{-1/2} M \widehat{M}^{-1/2} ) = A_{\text{prec}}^2$. 
For our tests, we use P1 elements in two dimensions, for which these eigenvalues are contained in $[\frac{1}{2}, 2]$ (see \cite{Wathen87}). Clearly, we have that $\gamma_{\min}^E \geq 0$, and simple analysis of relevant Rayleigh quotients also informs us that $\gamma_{\max}^E \leq \frac{4}{3} \gamma_{\max}^{\text{AMG}}$, $\gamma_{\min}^X \geq \frac{2}{3} \gamma_{\min}^{\text{AMG}} \gamma_{\min}^A$, and $\gamma_{\max}^X \leq \frac{4}{3} \gamma_{\max}^{\text{AMG}} \gamma_{\max}^A$. Here, $\gamma_{\min}^{\text{AMG}}$ and $\gamma_{\max}^{\text{AMG}}$ denote the minimum and maximum eigenvalues of $[ ( \sqrt{\beta} \, L + M )_{\text{AMG}} \, M^{-1} \, ( \sqrt{\beta} \, L + M )_{\text{AMG}} ]^{-1} [ ( \sqrt{\beta} \, L + M ) \, M^{-1} \, ( \sqrt{\beta} \, L + M ) ]$, that is they measure the impact of the algebraic multigrid routines on the effectiveness of the approximation of $X$. We explicitly compute $\gamma_{\min}^K$ and $\gamma_{\max}^K$ in our code, with a view to obtaining descriptive eigenvalue bounds.

As predicted by our theory, effective approximation of $A$, $S$, and $X$ leads to a potent preconditioner $\mathcal{P}$. Indeed, qualitatively speaking, our theoretical bounds capture very well where the influence of inexactness in the approximations manifests itself. As one would expect, our bounds are mesh- and $\beta$-robust, and tighten as the approximations for $A$ and $S$ improve with increasing $\ell$. We highlight that, as we have made use of theoretical, rather than exact, values for $\gamma_{\min}^A$, $\gamma_{\max}^A$, $\gamma_{\min}^R$, $\gamma_{\max}^R$, $\gamma_{\min}^E$, $\gamma_{\max}^E$, $\gamma_{\min}^X$, and $\gamma_{\max}^X$, as in practice we would like to estimate convergence based on \emph{a priori} knowledge of the problem, we do not expect the bounds to be very tight, especially for low values of $\ell$, and indeed we do not observe this to be the case in practice.

%

In Table \ref{Table5} we present the numbers of {\scshape Minres} iterations required to solve the system \eqref{PDECO_system} to a tolerance of $10^{-10}$, for each problem setup described above, as well as those for the partial observation case described below. We note that for $h = 2^{-4}$, $h = 2^{-5}$, and $h = 2^{-6}$, the linear systems have dimensions of (respectively) 867, 3267, and 12675, with these moderate dimensions being chosen simply so that eigensolvers may reasonably be applied. As expected, given the optimality of each of our approximations for $A$, $S$, and $X$, the iteration numbers are robust with respect to $h$ and $\beta$, and decrease as $\ell$ is increased.

\begin{table} 
\centering
\caption{Number of {\scshape Minres} iterations required to solve different full observation and boundary observation problems, for a range of Chebyshev semi-iterations $\ell$.}\label{Table5}
\begin{tabular}{|c||c|c|c|c||c|c|c|c|}
\hline
 & \multicolumn{4}{c||}{Full observation problems} & \multicolumn{4}{c|}{Boundary observation problems} \\ \cline{2-9}
$h$ & $2^{-4}$ & $2^{-4}$ & $2^{-5}$ & $2^{-6}$ & $2^{-4}$ & $2^{-4}$ & $2^{-5}$ & $2^{-6}$ \\ \hline
$~\ell \backslash \beta~$ & $~10^{-2}~$ & $~10^{-4}~$ & $~10^{-4}~$ & $~10^{-4}~$ & $~10^{-1}~$ & $~10^{-3}~$ & $~10^{-3}~$ & $~10^{-3}~$ \\ \hline \hline
1 & 94 & 85 & 96 & 109 & 107 & 136 & 152 & 166 \\ \hline
2 & 41 & 41 & 46 & 47 & 40 & 63 & 65 & 73 \\ \hline
3 & 26 & 29 & 30 & 31 & 24 & 42 & 45 & 45 \\ \hline
4 & 21 & 25 & 26 & 26 & 18 & 36 & 39 & 40 \\ \hline
5 & 18 & 22 & 23 & 23 & 14 & 30 & 34 & 34 \\ \hline
7 & 17 & 19 & 20 & 20 & 13 & 28 & 29 & 32 \\ \hline
10 & 14 & 18 & 19 & 19 & 12 & 25 & 28 & 28 \\ \hline
\end{tabular}
\end{table}

\subsection{Boundary Observation Problem}

Discretization of the boundary observation problem leads to the system
\begin{equation*}
\left(\begin{array}{ccc}
\beta M & M & O \\ M & O & L \\ O & L & M_{\partial\Omega} \\
\end{array}\right)\left(\begin{array}{c}
u_h \\ p_h \\ y_h \\
\end{array}\right)=\left(\begin{array}{c}
0 \\ 0 \\ \widehat{y}_{h,\partial\Omega} \\
\end{array}\right),
\end{equation*}
where $M_{\partial\Omega}$ denotes a mass matrix defined on the boundary $\partial\Omega$. The desired state $\widehat{y}$, defined only on $\partial\Omega$, is obtained by solving the forward PDE with `true' control $4x_1(1-x_1)+x_2$, a problem considered in \cite{MNN}.

In Table \ref{Table6} we show the extremal negative and positive eigenvalues of the preconditioned linear system, as well as corresponding bounds, for the boundary observation problem with $h = 2^{-4}$ and $\beta = 10^{-1}$, with different numbers of Chebyshev semi-iterations $\ell$ applied to approximate $A^{-1}$ and $S^{-1}$. In Tables \ref{Table7}, \ref{Table8}, and \ref{Table9}, we present these results for $h = 2^{-4}$, $h = 2^{-5}$, and $h = 2^{-6}$ respectively, with $\beta = 10^{-3}$. When establishing the analytic bounds, we may take the same values of $\gamma_{\min}^A$, $\gamma_{\max}^A$, $\gamma_{\min}^R$, and $\gamma_{\max}^R$ as for the full observation problem, as $S$ is the same. For the approximation of $X$, we take for this problem
\begin{equation*}
X = \beta L M^{-1} L + M_{\partial\Omega}, \qquad \widehat{X} = \big( \sqrt{\beta} \, L \big)_{\text{AMG}} \, M^{-1} \, \big( \sqrt{\beta} \, L \big)_{\text{AMG}},
\end{equation*}
noting that, unlike for the full observation problem, we do not have an optimal approximation of $X$ available, due to the highly rank-deficient term $M_{\partial\Omega}$. This does give us the opportunity to test the effect  of a relatively poor approximation of $X$, compared to those for $A$ and $S$, on the quality of our analytic bounds, due to values of $\gamma_{\max}^E$ and $\gamma_{\max}^X$ which depend on $\frac{1}{\beta}$. For this reason, we explicitly compute these values in our code. We do make use of further analytic properties: we have that $\gamma_{\min}^E = 0$, and Rayleigh quotient analysis gives us that $\gamma_{\min}^X \geq \gamma_{\min}^{\text{AMG}} \gamma_{\min}^A$, $\gamma_{\min}^K \geq \gamma_{\min}^{\text{AMG}} \gamma_{\min}^A$, and $\gamma_{\max}^K \leq \gamma_{\max}^{\text{AMG}} \gamma_{\max}^A$, with $\gamma_{\min}^{\text{AMG}}$ and $\gamma_{\max}^{\text{AMG}}$ now denoting the minimum and maximum eigenvalues of $[ ( \sqrt{\beta} \, L )_{\text{AMG}} \, M^{-1} \, ( \sqrt{\beta} \, L )_{\text{AMG}} ]^{-1} [ \beta L M^{-1} L ]$, due to $\widehat{X}$ having a different structure for the boundary observation problem.

\begin{table} [h!]
\centering
\caption{Computed eigenvalues of $\mathcal{P}^{-1} \mathcal{A}$ and bounds, for boundary observation problem with $h = 2^{-4}$, $\beta = 10^{-1}$, and a range of Chebyshev semi-iterations $\ell$.}\label{Table6}
\begin{tabular}{|c||c|c||c|c||c|c||c|c|}
\hline
$\ell$ & $\text{Bound}_l^-$ & $\rho_l^-$ & $\rho_u^-$ & $\text{Bound}_u^-$ & $\text{Bound}_l^+$ & $\rho_l^+$ & $\rho_u^+$ & $\text{Bound}_u^+$ \\ \hline \hline
1 & $-7.9780$ & $-1.3984$ & $-0.4812$ & $-0.0979$ & 0.0454 & 0.3374 & 41.494 & 43.068 \\ \hline
2 & $-2.1985$ & $-1.2103$ & $-0.7630$ & $-0.4926$ & 0.3320 & 0.7003 & 40.880 & 41.789 \\ \hline
3 & $-1.2718$ & $-1.0736$ & $-0.9263$ & $-0.7966$ & 0.6934 & 0.8869 & 40.896 & 41.211 \\ \hline
4 & $-1.0792$ & $-1.0247$ & $-0.9750$ & $-0.9280$ & 0.8735 & 0.9606 & 40.897 & 41.012 \\ \hline
5 & $-1.0252$ & $-1.0082$ & $-0.9918$ & $-0.9755$ & 0.9393 & 0.9765 & 40.897 & 40.945 \\ \hline
7 & $-1.0083$ & $-1.0009$ & $-0.9991$ & $-0.9918$ & 0.9614 & 0.9783 & 40.897 & 40.923 \\ \hline
10 & $-1.0028$ & $-1.0000$ & $-1.0000$ & $-0.9973$ & 0.9686 & 0.9783 & 40.897 & 40.915 \\ \hline
\end{tabular}
\end{table}

\begin{table} [h!]
\centering
\caption{Computed eigenvalues of $\mathcal{P}^{-1} \mathcal{A}$ and bounds, for boundary observation problem with $h = 2^{-4}$, $\beta = 10^{-3}$, and a range of Chebyshev semi-iterations $\ell$.}\label{Table7}
\begin{tabular}{|c||c|c||c|c||c|c||c|c|}
\hline
$\ell$ & $\text{Bound}_l^-$ & $\rho_l^-$ & $\rho_u^-$ & $\text{Bound}_u^-$ & $\text{Bound}_l^+$ & $\rho_l^+$ & $\rho_u^+$ & $\text{Bound}_u^+$ \\ \hline \hline
1 & $-7.9780$ & $-1.3985$ & $-0.4811$ & $-0.0979$ & 0.0454 & 0.3375 & 3991.6 & 3993.2 \\ \hline
2 & $-2.1985$ & $-1.2102$ & $-0.7619$ & $-0.4926$ & 0.3320 & 0.7004 & 3991.0 & 3991.9 \\ \hline
3 & $-1.2719$ & $-1.0736$ & $-0.9262$ & $-0.7966$ & 0.6934 & 0.8874 & 3991.0 & 3991.4 \\ \hline
4 & $-1.0792$ & $-1.0245$ & $-0.9750$ & $-0.9280$ & 0.8735 & 0.9607 & 3991.0 & 3991.2 \\ \hline
5 & $-1.0252$ & $-1.0082$ & $-0.9918$ & $-0.9755$ & 0.9393 & 0.9765 & 3991.0 & 3991.1 \\ \hline
7 & $-1.0083$ & $-1.0009$ & $-0.9991$ & $-0.9918$ & 0.9614 & 0.9787 & 3991.0 & 3991.1 \\ \hline
10 & $-1.0028$ & $-1.0000$ & $-1.0000$ & $-0.9973$ & 0.9686 & 0.9788 & 3991.0 & 3991.1 \\ \hline
\end{tabular}
\end{table}

\begin{table} [h!]
\centering
\caption{Computed eigenvalues of $\mathcal{P}^{-1} \mathcal{A}$ and bounds, for boundary observation problem with $h = 2^{-5}$, $\beta = 10^{-3}$, and a range of Chebyshev semi-iterations $\ell$.}\label{Table8}
\begin{tabular}{|c||c|c||c|c||c|c||c|c|}
\hline
$\ell$ & $\text{Bound}_l^-$ & $\rho_l^-$ & $\rho_u^-$ & $\text{Bound}_u^-$ & $\text{Bound}_l^+$ & $\rho_l^+$ & $\rho_u^+$ & $\text{Bound}_u^+$ \\ \hline \hline
1 & $-7.9786$ & $-1.4005$ & $-0.4812$ & $-0.0979$ & 0.0451 & 0.3375 & 3984.4 & 3986.0 \\ \hline
2 & $-2.1992$ & $-1.2103$ & $-0.7619$ & $-0.4926$ & 0.3399 & 0.7003 & 3983.8 & 3984.7 \\ \hline
3 & $-1.2719$ & $-1.0737$ & $-0.9263$ & $-0.7966$ & 0.6884 & 0.8866 & 3983.8 & 3984.1 \\ \hline
4 & $-1.0792$ & $-1.0245$ & $-0.9750$ & $-0.9280$ & 0.8670 & 0.9604 & 3983.8 & 3983.9 \\ \hline
5 & $-1.0252$ & $-1.0082$ & $-0.9918$ & $-0.9755$ & 0.9320 & 0.9733 & 3983.8 & 3983.9 \\ \hline
7 & $-1.0083$ & $-1.0009$ & $-0.9991$ & $-0.9918$ & 0.9538 & 0.9740 & 3983.8 & 3983.8 \\ \hline
10 & $-1.0028$ & $-1.0000$ & $-1.0000$ & $-0.9973$ & 0.9610 & 0.9740 & 3983.8 & 3983.8 \\ \hline
\end{tabular}
\end{table}

\begin{table} [h!]
\centering
\caption{Computed eigenvalues of $\mathcal{P}^{-1} \mathcal{A}$ and bounds, for boundary observation problem with $h = 2^{-6}$, $\beta = 10^{-3}$, and a range of Chebyshev semi-iterations $\ell$.}\label{Table9}
\begin{tabular}{|c||c|c||c|c||c|c||c|c|}
\hline
$\ell$ & $\text{Bound}_l^-$ & $\rho_l^-$ & $\rho_u^-$ & $\text{Bound}_u^-$ & $\text{Bound}_l^+$ & $\rho_l^+$ & $\rho_u^+$ & $\text{Bound}_u^+$ \\ \hline \hline
1 & $-7.9789$ & $-1.4013$ & $-0.4812$ & $-0.0979$ & 0.0450 & 0.3374 & 3969.3 & 3970.9 \\ \hline
2 & $-2.1994$ & $-1.2103$ & $-0.7619$ & $-0.4926$ & 0.3290 & 0.7003 & 3968.7 & 3969.6 \\ \hline
3 & $-1.2719$ & $-1.0737$ & $-0.9263$ & $-0.7966$ & 0.6862 & 0.8861 & 3968.7 & 3969.0 \\ \hline
4 & $-1.0792$ & $-1.0247$ & $-0.9750$ & $-0.9280$ & 0.8641 & 0.9549 & 3968.7 & 3968.8 \\ \hline
5 & $-1.0252$ & $-1.0082$ & $-0.9918$ & $-0.9755$ & 0.9288 & 0.9687 & 3968.7 & 3968.8 \\ \hline
7 & $-1.0083$ & $-1.0009$ & $-0.9991$ & $-0.9918$ & 0.9505 & 0.9697 & 3968.7 & 3968.7 \\ \hline
10 & $-1.0028$ & $-1.0000$ & $-1.0000$ & $-0.9973$ & 0.9577 & 0.9697 & 3968.7 & 3968.7 \\ \hline
\end{tabular}
\end{table}

We find that our analysis captures the behaviour of the outlier eigenvalues very well, specifically the largest positive eigenvalues. Interestingly, we observe that the large values of $\gamma_{\max}^E$ and $\gamma_{\max}^X$ for smaller $\beta$ only influence the largest positive eigenvalue in a noticeable way; all other (analytic and computed) bounds remain very similar. As for the full observation problem, the remaining eigenvalue bounds are descriptive as to the overall behaviour, if not necessarily tight for low numbers of Chebyshev semi-iterations. Our bounds exhibit the qualities one would expect, that is they are mesh-robust and tighten as the approximations $\widehat{A}$ and $\widehat{S}$ improve (as $\ell$ increases), so may be used as reliable indicators as to the convergence of the double saddle-point systems involved.

\section{Concluding Remarks} \label{sec:conclusion}

We have carried out a detailed spectral analysis of a family of double saddle-point
linear systems, preconditioned by the symmetric positive definite preconditioner proposed in \cite{pearson2023symmetric} within the framework of multiple saddle-point linear systems.
By means of a constrained optimization procedure, we were able to devise tight bounds for both the negative and positive intervals
containing the eigenvalues of the preconditioned matrix. Numerical results for synthetic test problems confirm the closeness
of the bounds to the endpoints of the intervals containing the spectrum.

We have illustrated the performance of this preconditioner on two realistic PDE-constrained optimization problems. Careful
selection of the block approximations $\widehat A$, $\widehat S$, and $\widehat X$ provide potent preconditioners, which we demonstrated numerically and compared with the analytic bounds. 
Our results reveal the developed bounds to be very descriptive and useful for predicting the convergence
of the MINRES iterative solver. Looseness of some bounds are observed only when the $(1,1)$ block is poorly approximated,
which also reflects on poor approximations of both the Schur complements $S$ and $X$.
Finally, we observe that
accurate approximations of $A$ and $S$ often seem to be more influential than that of $X$. The eigenvalue bounds depend on successive approximations,
meaning that inaccuracy in early blocks can ``propagate'' through the preconditioner and have a larger impact on the performance of the solver. 
In general, one should therefore permute linear systems, if possible, such that blocks which is easier to approximate appear first.

%

\subsection*{Acknowledgements}
LB and AM gratefully acknowledge the support of the INdAM-GNCS Project CUP$\_$E53C22001930001. The work of AM was carried out within the PNRR research activities of the consortium iNEST (Interconnected North-Est Innovation Ecosystem) funded by the European Union Next-GenerationEU (Piano Nazionale di Ripresa e Resilienza (PNRR) –- Missione 4 Componente 2, Investimento 1.5 -- D.D. 1058 23/06/2022, ECS$\_$00000043). 
This manuscript reflects only the Authors' views and opinions, neither the European Union nor the European Commission can be considered responsible for them. 
AP gratefully acknowledges financial support by the German Ministry of Education and Research under grant 05M22MCA.

\subsection*{Additional statements} The data that support the findings of this study are available from the corresponding author upon reasonable request. The authors have no financial or proprietary interests in any material discussed in this article.

\end{document}